\let\old@setaddresses\@setaddresses
\def\@setaddresses{\bigskip{\parindent 0pt\let\scshape\relax\let\ttfamily\relax\old@setaddresses}}
\setlist[enumerate]{label=\textup{(\arabic*)}, noitemsep, topsep=3pt plus 3pt, leftmargin=*}
\setlist[itemize]{label=\textbullet, noitemsep, topsep=3pt plus 3pt, labelsep=.5em, labelindent=.2em, leftmargin=*}
\newlist{enumeratei}{enumerate}{1}
\setlist[enumeratei]{label=\textup{(\roman*)}, leftmargin=*, widest=iii}
\newtheorem{theorem}{Theorem}[section]
\newtheorem{proposition}[theorem]{Proposition}
\newtheorem{lemma}[theorem]{Lemma}
\newtheorem{conjecture}[theorem]{Conjecture}
\theoremstyle{remark}
\newtheorem{claim}[theorem]{Claim}
\newcommand{\cB}{\mathcal{B}}
\newcommand{\cF}{\mathcal{F}}
\newcommand{\cH}{\mathcal{H}}
\newcommand{\cL}{\mathcal{L}}
\newcommand{\cR}{\mathcal{R}}
\newcommand{\R}{\mathbb{R}}
\let\leq\leqslant
\let\geq\geqslant
\let\setminus\smallsetminus
\let\int\undefined
\DeclareMathOperator{\int}{int}
\title{Colouring polygon visibility graphs and their generalizations}
\author{James Davies\and Tomasz Krawczyk\and Rose McCarty\and Bartosz Walczak}
\address[James Davies and Rose McCarty]{Department of Combinatorics and Optimization, School of Mathematics, University of Waterloo, Canada}
\email{\href{mailto:jgdavies@uwaterloo.ca}{jgdavies@uwaterloo.ca}, \href{mailto:rose.mccarty@uwaterloo.ca}{rose.mccarty@uwaterloo.ca}}
\address[Tomasz Krawczyk and Bartosz Walczak]{Department of Theoretical Computer Science, Faculty of Mathematics and Computer Science, Jagiellonian University, Kraków, Poland}
\email{\href{mailto:krawczyk@tcs.uj.edu.pl}{krawczyk@tcs.uj.edu.pl}, \href{mailto:walczak@tcs.uj.edu.pl}{walczak@tcs.uj.edu.pl}}
\thanks{Tomasz Krawczyk and Bartosz Walczak were partially supported by the National Science Centre of Poland grant 2015/17/D/ST1/00585.}
\begin{document}

\begin{abstract}
Curve pseudo-visibility graphs generalize polygon and pseudo-polygon visibility graphs and form a hereditary class of graphs.
We prove that every curve pseudo-visibility graph with clique number $\omega$ has chromatic number at most $3\cdot 4^{\omega-1}$.
The proof is carried through in the setting of ordered graphs; we identify two conditions satisfied by every curve pseudo-visibility graph (considered as an ordered graph) and prove that they are sufficient for the claimed bound.
The proof is algorithmic: both the clique number and a colouring with the claimed number of colours can be computed in polynomial time.
\end{abstract}

\maketitle

\section{Introduction}

A \emph{polygon} is a Jordan curve made of finitely many line segments.
A \emph{polygon visibility graph} is the graph on the set of vertices of a polygon $P$ that has an edge between each pair of \emph{mutually visible} vertices, which means that the line segment connecting them is disjoint from the exterior of $P$.
A class of graphs is \emph{$\chi$-bounded} if there is a function that bounds the chromatic number in terms of the clique number for every graph in the class.
A clique in a polygon visibility graph has a natural interpretation---it is the maximum size of a subset of the vertices whose convex hull is disjoint from the exterior of the polygon (see Figure~\ref{fig:graph_classes}, top-left).
The starting point of and main motivation for this work is the question of Kára, Pór, and Wood~\cite{KPW05} of whether the class of polygon visibility graphs is $\chi$-bounded.
We answer it in the affirmative.

\begin{theorem}
\label{thm:poly}
Every polygon visibility graph with clique number\/ $\omega$ has chromatic number at most\/ $3\cdot 4^{\omega-1}$.
\end{theorem}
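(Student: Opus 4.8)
The plan is to move to the setting of ordered graphs. Cut the boundary cycle of $P$ at one of its edges to obtain a linear order $v_1 < v_2 < \dots < v_n$ on the vertices, and regard the visibility graph $G$ equipped with this order as an ordered graph. I would then isolate two purely combinatorial properties of this ordered structure, check that every polygon visibility graph has them, and prove that they alone force the bound. The first, routine, property records that the boundary survives: $v_iv_{i+1}\in E(G)$ for every $i$, and $v_1v_n\in E(G)$. The second, substantive, property should say how visibility interacts with the order --- roughly, that ``sight lines are blocked only by windows'': for every edge $v_av_c$ with $a<c$ and every vertex $v_b$ with $a<b<c$, either $v_b$ is adjacent to $v_a$ or to $v_c$, or there is an edge $v_{b'}v_{b''}\neq v_av_c$ with $a\le b'<b<b''\le c$. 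Thus the edges with both ends in the interval $[a,c]$ of an edge form a laminar family that ``brackets'' every intermediate vertex, and each innermost bracket behaves like the third side of a triangle.

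To verify the second property for polygon visibility graphs I would fix a triangulation of $P$. Its dual is a tree, and two vertices are mutually visible exactly when the segment joining them crosses precisely the diagonals lying on the corresponding path of the tree, each within its extent. For an edge $v_av_c$ that is not a boundary edge, the chord $v_av_c$ together with the arc $v_{a+1},\dots,v_{c-1}$ bounds a sub-polygon; the triangle of the triangulation incident to $v_av_c$ inside it has an apex $v_b$ with $a<b<c$ that is visible --- within the sub-polygon, hence in $P$ --- from both $v_a$ and $v_c$, and iterating toward a given intermediate vertex produces the required bracket around it.

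Granting the two properties, I would establish the bound by induction on $\omega$. The target recursion has the shape $f(\omega)\le 4f(\omega-1)$ once $\omega$ exceeds a small threshold, with the base cases handled directly and absorbing the factor $3$ (for instance, the innermost structures one is left with behave like triangulated polygons, which are $3$-colourable). For the inductive step I would use the laminar bracket structure to split $V(G)$ into at most four parts such that each part, with the inherited order, again satisfies the two properties but with clique number at most $\omega-1$ --- with the possible exception of one ``outer'' part, a fan of vertices reachable from the cut edge $v_1v_n$ by a single chord, whose chords are nested and which is therefore colourable within the same budget as a clique-number-$(\omega-1)$ graph. Summing chromatic numbers over the parts and applying the inductive hypothesis gives $\chi(G)\le 4f(\omega-1)=f(\omega)$. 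Everything here is effective: a triangulation, the bracket decomposition, and the recursive colouring can be computed in polynomial time, and the largest clique encountered along the way equals $\omega$, which yields the algorithmic statement.

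The step I expect to be the real obstacle is this inductive decomposition together with the inseparable task of pinning down the second property precisely: it must be weak enough to hold for all visibility graphs --- indeed for all curve pseudo-visibility graphs --- yet strong enough to guarantee a bounded-size partition into parts of strictly smaller clique number. Getting both the statement of the property and the four-part split right, in particular isolating the single ``nested-chord'' part that refuses to shed a unit of clique number but is colourable anyway, is where the difficulty lies.
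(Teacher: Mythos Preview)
Your plan diverges from the paper's in both the combinatorial properties isolated and the structure of the recursion, and the step you flag as the obstacle is indeed a genuine gap.

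The paper does not use your bracket property. It isolates two different conditions on the ordered graph: \emph{$\cH$-freeness}, which says that whenever there is a ``crossing sequence'' of edges from $u$ around to $v$ and another from $v$ around to $u$, then $uv$ must be an edge; and \emph{ordered-hole-freeness}, forbidding induced cycles $c_1\prec\cdots\prec c_k$ ($k\ge 4$) with only the edges $c_ic_{i+1}$ and $c_1c_k$. Your bracket condition is a genuine property of polygon visibility graphs, but it is strictly weaker: an ordered $4$-hole on four vertices satisfies Hamiltonicity and your bracket condition, yet violates ordered-hole-freeness; and your condition says nothing about long chains of crossing edges, which is exactly what drives the paper's argument.

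More importantly, the arithmetic $3\cdot 4^{\omega-1}$ arises by a different mechanism than the one you sketch. The factor $3$ is not a base case: it comes from partitioning the vertex set of any $\cH$-free ordered graph into three parts each inducing a \emph{capped} graph (one in which $a\prec b\prec c\prec d$ with $ac,bd\in E$ forces $ad\in E$); this partition is built by a recursive ``window'' decomposition driven by crossing sequences, in the spirit of Suri's window partition for polygons. The factor $4^{\omega-1}$ is then obtained for each capped part not by a four-way vertex split lowering the clique number, but by an \emph{edge} decomposition into $\omega-1$ triangle-free capped (and still ordered-hole-free) subgraphs, each shown to be $4$-colourable by a BFS-layer argument; the product colouring over these $\omega-1$ layers gives $4^{\omega-1}$.

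Your recursion $f(\omega)\le 4f(\omega-1)$ via a four-part vertex split with clique number dropping by one is not substantiated, and I do not see how to extract such a split from the bracket property: it constrains edges inside the interval of a single chord but gives no control over how cliques distribute across a partition, nor does it supply the ``cap'' condition that makes the paper's edge-peeling work. The ``nested-chord'' exceptional part and the $3$-colourable base case you describe have no counterpart in the actual proof. If you want to pursue your line, the first concrete test is whether your two properties imply $\cH$-freeness (or yield a bounded partition into capped graphs); without something of that strength, the laminar bracket structure alone looks too weak to bound the chromatic number.
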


The bound in Theorem~\ref{thm:poly} also holds for all induced subgraphs of polygon visibility graphs.
Such graphs can be defined alternatively as \emph{curve visibility graphs}, that is, visibility graphs of points on a Jordan curve, where two points are considered to be mutually visible if the line segment connecting them is disjoint from the exterior of the curve (see Figure~\ref{fig:graph_classes}, bottom-left).

\begin{figure}[t]
\centering
\begin{tikzpicture}[scale=0.19, every node/.style={inner sep=2, outer sep=0}, vtx/.style={draw, circle, fill=gray}]
  \coordinate (P1) at (-14,1) {};
  \coordinate (P2) at (-2,-11) {};
  \coordinate (P3) at (4,-7) {};
  \coordinate (P4) at (-4,-5) {};
  \coordinate (P5) at (4,3) {};
  \coordinate (P6) at (8.5,-4) {};
  \coordinate (P7) at (5,-11) {};
  \coordinate (P8) at (13,-11) {};
  \coordinate (P9) at (14,-5) {};
  \coordinate (P10) at (4,11) {};
  \coordinate (P11) at (-2,6) {};
  \coordinate (P12) at (-6,8) {};
  \useasboundingbox (-16,-16) rectangle (16,16);
  \draw[fill=gray!20] (P1)--(P12)--(P11)--(P5)--(P4)--cycle;
  \draw[line width=3pt, line join=round] (P1)--(P2)--(P3)--(P4)--(P5)--(P6)--(P7)--(P8)--(P9)--(P10)--(P11)--(P12)--cycle;
  \draw[red, thick] (P1)--(P2)--(P3)--(P4)--(P5)--(P6)--(P7)--(P8)--(P9)--(P10)--(P11)--(P12)--cycle;
  \draw[red, thick] (P1)--(P2);
  \draw[red, thick] (P1)--(P4);
  \draw[red, thick] (P1)--(P5);
  \draw[red, thick] (P1)--(P11);
  \draw[red, thick] (P1)--(P12);
  \draw[red, thick] (P2)--(P4);
  \draw[red, thick] (P4)--(P10);
  \draw[red, thick] (P4)--(P11);
  \draw[red, thick] (P4)--(P12);
  \draw[red, thick] (P5)--(P9);
  \draw[red, thick] (P5)--(P10);
  \draw[red, thick] (P6)--(P8);
  \draw[red, thick] (P6)--(P9);
  \draw[red, thick] (P6)--(P10);
  \draw[red, thick] (P7)--(P9);
  \draw[red, thick] (P8)--(P10);
  \node[vtx] at (P1) {};
  \node[vtx] at (P2) {};
  \node[vtx] at (P3) {};
  \node[vtx] at (P4) {};
  \node[vtx] at (P6) {};
  \node[vtx] at (P7) {};
  \node[vtx] at (P8) {};
  \node[vtx] at (P9) {};
  \node[vtx] at (P10) {};
  \node[vtx] at (P11) {};
  \draw[red, thick] (P5)--(P12);
  \node[vtx] at (P5) {};
  \node[vtx] at (P12) {};
  \node at ($(P1)+(0,-0.5)$) [below] {$1$};
  \node at ($(P2)+(0,-0.5)$) [below] {$2$};
  \node at ($(P3)+(0,0.5)$) [above] {$3$};
  \node at ($(P4)+(1.6,0.7)$) [right] {$4$};
  \node at ($(P5)+(-0.2,-1)$) [below] {$5$};
  \node at ($(P6)+(-0.7,0)$) [left] {$6$};
  \node at ($(P7)+(0,-0.5)$) [below] {$7$};
  \node at ($(P8)+(0,-0.5)$) [below] {$8$};
  \node at ($(P9)+(0.5,0)$) [right] {$9$};
  \node at ($(P10)+(0,0.5)$) [above] {$10$};
  \node at ($(P11)+(0,1)$) [above] {$11$};
  \node at ($(P12)+(0,0.5)$) [above] {$12$};
\end{tikzpicture}
\hspace{0.6cm}
\begin{tikzpicture}[scale=0.19, every node/.style={inner sep=2, outer sep=0}, vtx/.style={draw, circle, fill=gray}]
  \coordinate (A1) at (-2,8.5) {};
  \coordinate (A2) at (0,0) {};
  \coordinate (A3) at (-9,-2) {};
  \coordinate (A4) at (2,-9) {};
  \coordinate (A5) at (9,3) {};
  \coordinate (L124_B1_1) at (-2.5,14) {};
  \coordinate (L124_A1_1) at (-2.2,6) {};
  \coordinate (L124_A1_2) at (-2,4) {};
  \coordinate (L124_A2_1) at (3,-2) {};
  \coordinate (L124_A2_2) at (3,-6) {};
  \coordinate (L124_A4_1) at (1,-14) {};
  \coordinate (L13_B1_1) at (8,14) {};
  \coordinate (L13_B1_2) at (4,11) {};
  \coordinate (L13_A3_1) at (-9,-8) {};
  \coordinate (L13_A3_2) at (-10,-14) {};
  \coordinate (L325_B3_1) at (-14,-2) {};
  \coordinate (L325_A3_1) at (-7,-2.3) {};
  \coordinate (L325_A3_2) at (-4,-2) {};
  \coordinate (L325_A2_1) at (2,2) {};
  \coordinate (L325_A2_2) at (6,3.2) {};
  \coordinate (L325_A5_1) at (14,3) {};
  \coordinate (L15_B1_1) at (-14,6) {};
  \coordinate (L15_B1_2) at (-8,6.75) {};
  \coordinate (L15_A1_1) at (2,8) {};
  \coordinate (L15_A1_2) at (6,6) {};
  \coordinate (L15_A5_1) at (11,-2) {};
  \coordinate (L15_A5_2) at (14,-6) {};
  \coordinate (L34_B3_1) at (-10,14) {};
  \coordinate (L34_B3_2) at (-10,8) {};
  \coordinate (L34_A3_1) at (-7,-4) {};
  \coordinate (L34_A3_2) at (-4,-6) {};
  \coordinate (L34_A4_1) at (14,-12) {};
  \coordinate (L45_B4_1) at (-14,-11) {};
  \coordinate (L45_A4_1) at (5,-8) {};
  \coordinate (L45_A4_2) at (9,-3.5) {};
  \coordinate (L45_A4_3) at (9.25,0) {};
  \coordinate (L45_A5_1) at (14,10) {};
  \useasboundingbox (-17,-16) rectangle (17,16);
  \draw[use Hobby shortcut] (L124_B1_1) .. (A1) .. (L124_A1_1) .. (L124_A1_2) .. (A2) .. (L124_A2_1) .. (L124_A2_2) .. (A4) .. (L124_A4_1);
  \draw[use Hobby shortcut] (L325_B3_1) .. (A3) .. (L325_A3_1) .. (L325_A3_2) .. (A2) .. (L325_A2_1) .. (L325_A2_2) .. (A5) .. (L325_A5_1);
  \draw[use Hobby shortcut] (L15_B1_1) .. (L15_B1_2) .. (A1) .. (L15_A1_1) .. (L15_A1_2) .. (A5) .. (L15_A5_1) .. (L15_A5_2);
  \draw[use Hobby shortcut] (L13_B1_1) .. (L13_B1_2) .. (A1) .. (A3) .. (L13_A3_1) .. (L13_A3_2);
  \draw[use Hobby shortcut] (L34_B3_1) .. (L34_B3_2) .. (A3) .. (L34_A3_1) .. (L34_A3_2) .. (A4) .. (L34_A4_1);
  \draw[use Hobby shortcut] (L45_B4_1) .. (A4) .. (L45_A4_1) .. (L45_A4_2) .. (L45_A4_3) .. (A5) .. (L45_A5_1);
  \draw[line width=3pt, use Hobby shortcut] (L124_B1_1) .. ([blank]A1) .. (L124_A1_1) .. (L124_A1_2) .. (A2) .. ([blank]L124_A2_1) .. ([blank]L124_A2_2) .. ([blank]A4) .. ([blank]L124_A4_1);
  \draw[line width=3pt, use Hobby shortcut] (L325_B3_1) .. ([blank]A3) .. (L325_A3_1) .. (L325_A3_2) .. (A2) .. ([blank]L325_A2_1) .. ([blank]L325_A2_2) .. ([blank]A5) .. ([blank]L325_A5_1);
  \draw[line width=3pt, use Hobby shortcut] (L15_B1_1) .. ([blank]L15_B1_2) .. ([blank]A1) .. (L15_A1_1) .. (L15_A1_2) .. (A5) .. ([blank]L15_A5_1) .. ([blank]L15_A5_2);
  \draw[line width=3pt, use Hobby shortcut] (L34_B3_1) .. ([blank]L34_B3_2) .. ([blank]A3) .. (L34_A3_1) .. (L34_A3_2) .. (A4) .. ([blank]L34_A4_1);
  \draw[line width=3pt, use Hobby shortcut] (L45_B4_1) .. ([blank]A4) .. (L45_A4_1) .. (L45_A4_2) .. (L45_A4_3) .. (A5) .. ([blank]L45_A5_1);
  \node[vtx] at (A2) {};
  \draw[red, thick, use Hobby shortcut] (L124_B1_1) .. ([blank]A1) .. (L124_A1_1) .. (L124_A1_2) ..(A2) .. (L124_A2_1) ..(L124_A2_2) .. (A4) .. ([blank]L124_A4_1);
  \draw[red, thick, use Hobby shortcut] (L325_B3_1) .. ([blank]A3) .. (L325_A3_1) .. (L325_A3_2) .. (A2) .. (L325_A2_1) .. (L325_A2_2) .. (A5) .. ([blank]L325_A5_1);
  \draw[red, thick, use Hobby shortcut] (L15_B1_1) .. ([blank]L15_B1_2) .. ([blank]A1) .. (L15_A1_1) .. (L15_A1_2) .. (A5) .. ([blank]L15_A5_1) .. ([blank]L15_A5_2);
  \draw[red, thick, use Hobby shortcut] (L34_B3_1) .. ([blank]L34_B3_2) .. ([blank]A3) .. (L34_A3_1) .. (L34_A3_2) .. (A4) .. ([blank]L34_A4_1);
  \draw[red, thick, use Hobby shortcut] (L45_B4_1) .. ([blank]A4) .. (L45_A4_1) .. (L45_A4_2) .. (L45_A4_3) .. (A5) .. ([blank]L45_A5_1);
  \node[vtx] at (A1) {};
  \node[vtx] at (A3) {};
  \node[vtx] at (A4) {};
  \node[vtx] at (A5) {};
  \node at ($(A1)+(-1,0.5)$) [above] {$1$};
  \node at ($(A2)+(0,-0.65)$) [below] {$2$};
  \node at ($(A3)+(-1,-0.5)$) [below] {$3$};
  \node at ($(A4)+(0.8,-0.6)$) [below] {$4$};
  \node at ($(A5)+(1.2,0.35)$) [above] {$5$};
  \node at (L34_B3_1) [above] {$34$};
  \node at (L34_A4_1) [right] {$34$};
  \node at (L124_B1_1) [above] {$124$};
  \node at (L124_A4_1) [below] {$124$};
  \node at (L325_B3_1) [left] {$325$};
  \node at (L325_A5_1) [right] {$325$};
  \node at (L15_B1_1) [left] {$15$};
  \node at (L15_A5_2) [right] {$15$};
  \node at (L13_B1_1) [above] {$13$};
  \node at (L13_A3_2) [below] {$13$};
  \node at (L45_B4_1) [left] {$45$};
  \node at (L45_A5_1) [right] {$45$};
\end{tikzpicture}\\[0.6cm]
\begin{tikzpicture}[scale=0.19, every node/.style={inner sep=2, outer sep=0}, vtx/.style={draw, circle, fill=gray}]
  \coordinate (P1) at (-14,1) {};
  \coordinate (P2) at (-2,-11) {};
  \coordinate (P3) at (4,-7) {};
  \coordinate (P4) at (-4,-5) {};
  \coordinate (P4P5_1) at (-4,-4) {};
  \coordinate (P4P5_2) at (0,1.5) {};
  \coordinate (P5) at (4,3) {};
  \coordinate (P6) at (8.5,-4) {};
  \coordinate (P7) at (5,-11) {};
  \coordinate (P8) at (13,-11) {};
  \coordinate (P9) at (14,-5) {};
  \coordinate (P10) at (4,11) {};
  \coordinate (P11) at (-2,6) {};
  \coordinate (P11P12) at (-4,6.5) {};
  \coordinate (P12) at (-6,8) {};
  \useasboundingbox (-16,-16) rectangle (16,16);
  \draw[line width=3pt, use Hobby shortcut] ([closed]P1)..(P2)..(P3)..(P4)..(P4P5_1)..(P4P5_2)..(P5)..(P6)..(P7)..(P8)..(P9)..(P10)..(P11)..(P11P12)..(P12);
  \draw[red, thick] (P1)--(P2);
  \draw[red, thick] (P1)--(P4);
  \draw[red, thick] (P1)--(P5);
  \draw[red, thick] (P1)--(P11);
  \draw[red, thick] (P1)--(P12);
  \draw[red, thick] (P2)--(P3);
  \draw[red, thick] (P2)--(P4);
  \draw[red, thick] (P4)--(P12);
  \draw[red, thick] (P5)--(P10);
  \draw[red, thick] (P5)--(P11);
  \draw[red, thick] (P6)--(P7);
  \draw[red, thick] (P6)--(P8);
  \draw[red, thick] (P6)--(P9);
  \draw[red, thick] (P7)--(P8);
  \draw[red, thick] (P7)--(P9);
  \draw[red, thick] (P8)--(P9);
  \draw[red, thick] (P8)--(P10);
  \draw[red, thick] (P9)--(P10);
  \draw[red, thick] (P9)--(P10);
  \node[vtx] at (P1) {};
  \node[vtx] at (P2) {};
  \node[vtx] at (P3) {};
  \node[vtx] at (P4) {};
  \node[vtx] at (P5) {};
  \node[vtx] at (P6) {};
  \node[vtx] at (P7) {};
  \node[vtx] at (P8) {};
  \node[vtx] at (P9) {};
  \node[vtx] at (P10) {};
  \node[vtx] at (P11) {};
  \node[vtx] at (P12) {};
  \node at ($(P1)+(-0.8,-0.5)$) [below] {$1$};
  \node at ($(P2)+(0,-0.5)$) [below] {$2$};
  \node at ($(P3)+(0.3,0.5)$) [above] {$3$};
  \node at ($(P4)+(1.6,0.9)$) [right] {$4$};
  \node at ($(P5)+(-0.2,-1)$) [below] {$5$};
  \node at ($(P6)+(-0.7,0)$) [left] {$6$};
  \node at ($(P7)+(-0.8,-0.5)$) [below] {$7$};
  \node at ($(P8)+(0.8,-0.5)$) [below] {$8$};
  \node at ($(P9)+(0.5,0)$) [right] {$9$};
  \node at ($(P10)+(-0.2,0.5)$) [above] {$10$};
  \node at ($(P11)+(0,1)$) [above] {$11$};
  \node at ($(P12)+(0,0.5)$) [above] {$12$};
\end{tikzpicture}
\hspace{0.6cm}
\begin{tikzpicture}[scale=0.19, every node/.style={inner sep=2, outer sep=0}, vtx/.style={draw, circle, fill=gray}]
  \coordinate (A1) at (-2,8.5) {};
  \coordinate (A2) at (0,0) {};
  \coordinate (A3) at (-9,-2) {};
  \coordinate (A4) at (2,-9) {};
  \coordinate (A5) at (9,3) {};
  \coordinate (IA1A2_1) at (-4.3,5.2) {};
  \coordinate (IA1A2_2) at (-2.5,1) {};
  \coordinate (IA2A3_1) at (1.25,-2.5) {};
  \coordinate (IA2A3_2) at (-4,-4) {};
  \coordinate (IA3A4_1) at (-11,-1) {};
  \coordinate (IA3A4_2) at (-12.5,-4.5) {};
  \coordinate (IA3A4_3) at (-9,-9.5) {};
  \coordinate (IA3A4_4) at (0,-10) {};
  \coordinate (IA4A5_1) at (6,-9.5) {};
  \coordinate (IA4A5_2) at (11,-8) {};
  \coordinate (IA4A5_3) at (12,0) {};
  \coordinate (IA5A1_1) at (8,8) {};
  \coordinate (IA5A1_2) at (5,11) {};
  \coordinate (IA5A1_3) at (1,11) {};
  \coordinate (L124_B1_1) at (-2.5,14) {};
  \coordinate (L124_A1_1) at (-2.2,6) {};
  \coordinate (L124_A1_2) at (-2,4) {};
  \coordinate (L124_A2_1) at (3,-2) {};
  \coordinate (L124_A2_2) at (3,-6) {};
  \coordinate (L124_A4_1) at (1,-14) {};
  \coordinate (L13_B1_1) at (8,14) {};
  \coordinate (L13_B1_2) at (4,11) {};
  \coordinate (L13_A3_1) at (-9,-8) {};
  \coordinate (L13_A3_2) at (-10,-14) {};
  \coordinate (L325_B3_1) at (-14,-2) {};
  \coordinate (L325_A3_1) at (-7,-2.3) {};
  \coordinate (L325_A3_2) at (-4,-2) {};
  \coordinate (L325_A2_1) at (2,2) {};
  \coordinate (L325_A2_2) at (6,3.2) {};
  \coordinate (L325_A5_1) at (14,3) {};
  \coordinate (L15_B1_1) at (-14,6) {};
  \coordinate (L15_B1_2) at (-8,6.75) {};
  \coordinate (L15_A1_1) at (2,8) {};
  \coordinate (L15_A1_2) at (6,6) {};
  \coordinate (L15_A5_1) at (11,-2) {};
  \coordinate (L15_A5_2) at (14,-6) {};
  \coordinate (L34_B3_1) at (-10,14) {};
  \coordinate (L34_B3_2) at (-10,8) {};
  \coordinate (L34_A3_1) at (-7,-4) {};
  \coordinate (L34_A3_2) at (-4,-6) {};
  \coordinate (L34_A4_1) at (14,-12) {};
  \coordinate (L45_B4_1) at (-14,-11) {};
  \coordinate (L45_A4_1) at (5,-8) {};
  \coordinate (L45_A4_2) at (9,-3.5) {};
  \coordinate (L45_A4_3) at (9.25,0) {};
  \coordinate (L45_A5_1) at (14,10) {};
  \useasboundingbox (-17,-16) rectangle (17,16);
  \draw[line width=3pt, use Hobby shortcut] ([closed]A1) .. (IA1A2_1) .. (IA1A2_2) .. (A2) .. (IA2A3_1) .. (IA2A3_2) .. (A3) .. (IA3A4_1) .. (IA3A4_2) .. (IA3A4_3) .. (IA3A4_4) .. (A4) .. (IA4A5_1) .. (IA4A5_2) .. (IA4A5_3) .. (A5) .. (IA5A1_1) .. (IA5A1_2) .. (IA5A1_3);
  \draw[use Hobby shortcut] (L124_B1_1) .. (A1) .. (L124_A1_1) .. (L124_A1_2) .. (A2) .. (L124_A2_1) .. (L124_A2_2) .. (A4) .. (L124_A4_1);
  \draw[use Hobby shortcut] (L325_B3_1) .. (A3) .. (L325_A3_1) .. (L325_A3_2) .. (A2) .. (L325_A2_1) .. (L325_A2_2) .. (A5) .. (L325_A5_1);
  \draw[use Hobby shortcut] (L15_B1_1) .. (L15_B1_2) .. (A1) .. (L15_A1_1) .. (L15_A1_2) .. (A5) .. (L15_A5_1) .. (L15_A5_2);
  \draw[use Hobby shortcut] (L13_B1_1) .. (L13_B1_2) .. (A1) .. (A3) .. (L13_A3_1) .. (L13_A3_2);
  \draw[use Hobby shortcut] (L34_B3_1) .. (L34_B3_2) .. (A3) .. (L34_A3_1) .. (L34_A3_2) .. (A4) .. (L34_A4_1);
  \draw[use Hobby shortcut] (L45_B4_1) .. (A4) .. (L45_A4_1) .. (L45_A4_2) .. (L45_A4_3) .. (A5) .. (L45_A5_1);
  \draw[red, thick, use Hobby shortcut] (L325_B3_1) .. ([blank]A3) .. ([blank]L325_A3_1) .. ([blank]L325_A3_2) .. ([blank]A2) .. (L325_A2_1) ..(L325_A2_2) .. (A5) .. ([blank]L325_A5_1);
  \draw[red, thick, use Hobby shortcut] (L15_B1_1) .. ([blank]L15_B1_2) .. ([blank]A1) .. (L15_A1_1) .. (L15_A1_2) .. (A5) .. ([blank]L15_A5_1) .. ([blank]L15_A5_2);
  \draw[red, thick, use Hobby shortcut] (L34_B3_1) .. ([blank]L34_B3_2) .. ([blank]A3) .. (L34_A3_1) .. (L34_A3_2) .. (A4) .. ([blank]L34_A4_1);
  \draw[red, thick, use Hobby shortcut] (L45_B4_1) .. ([blank]A4) .. (L45_A4_1) .. (L45_A4_2) .. (L45_A4_3) .. (A5) .. ([blank]L45_A5_1);
  \node[vtx] at (A2) {};
  \node[vtx] at (A3) {};
  \node[vtx] at (A5) {};
  \draw[red, thick, use Hobby shortcut] (L124_B1_1) .. ([blank]A1) .. (L124_A1_1) .. (L124_A1_2) ..(A2) .. (L124_A2_1) .. (L124_A2_2) .. (A4) .. ([blank]L124_A4_1);
  \node[vtx] at (A1) {};
  \node[vtx] at (A4) {};
  \node at ($(A1)+(-1,0.5)$) [above] {$1$};
  \node at ($(A2)+(0,-0.65)$) [below] {$2$};
  \node at ($(A3)+(-1,-0.5)$) [below] {$3$};
  \node at ($(A4)+(0.8,-0.6)$) [below] {$4$};
  \node at ($(A5)+(1.2,0.35)$) [above] {$5$};
  \node at (L34_B3_1) [above] {$34$};
  \node at (L34_A4_1) [right] {$34$};
  \node at (L124_B1_1) [above] {$124$};
  \node at (L124_A4_1) [below] {$124$};
  \node at (L325_B3_1) [left] {$325$};
  \node at (L325_A5_1) [right] {$325$};
  \node at (L15_B1_1) [left] {$15$};
  \node at (L15_A5_2) [right] {$15$};
  \node at (L13_B1_1) [above] {$13$};
  \node at (L13_A3_2) [below] {$13$};
  \node at (L45_B4_1) [left] {$45$};
  \node at (L45_A5_1) [right] {$45$};
\end{tikzpicture}
\caption{From left to right: a polygon visibility graph (where the convex hull of a maximum clique is shaded), a pseudo-polygon visibility graph, a curve visibility graph, and a curve pseudo-visibility graph.
A ``visibility'' between each pair of adjacent vertices is drawn with a red (pseudo-)segment.}
\label{fig:graph_classes}
\end{figure}
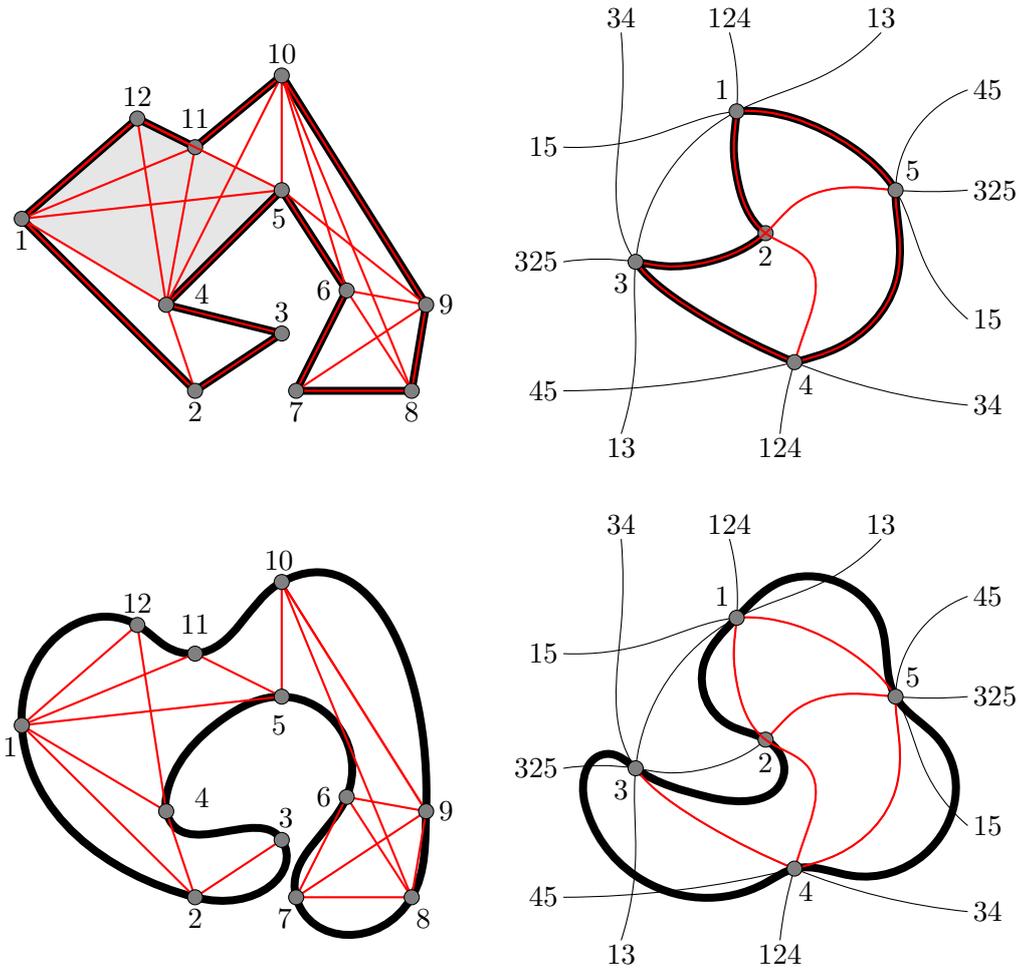

O'Rourke and Streinu~\cite{ORS97} studied visibility graphs of pseudo-polygons (polygons on pseudoline arrangements; see Figure~\ref{fig:graph_classes}, top-right), where two vertices of the polygon are considered to be mutually visible if the pseudoline segment connecting them in the arrangement is disjoint from the exterior of the polygon.
As a common generalization of these graphs and curve visibility graphs, we define curve pseudo-visibility graphs as follows.
For a pseudoline arrangement $\cL$, a Jordan curve $K$, and a finite set $V$ of points on $K$ any two of which lie on a common pseudoline in $\cL$, the \emph{curve pseudo-visibility graph} $G_\cL(K,V)$ has vertex set $V$ and has an edge between each pair of vertices such that the pseudoline segment in $\cL$ connecting them is disjoint from the exterior of $K$ (see Figure~\ref{fig:graph_classes}, bottom-right).
We elaborate on this notion in Section~\ref{sec:pseudo}; in particular, we show that curve pseudo-visibility graphs are exactly the induced subgraphs of the visibility graphs of pseudo-polygons.
With this notion in hand, we provide the following topological generalization of Theorem~\ref{thm:poly}.

\begin{theorem}
\label{thm:pseudo}
Every curve pseudo-visibility graph with clique number\/ $\omega$ has chromatic number at most\/ $3\cdot 4^{\omega-1}$.
\end{theorem}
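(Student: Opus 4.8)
The plan is to recast the problem in the language of \emph{ordered graphs} and to isolate the combinatorial content of ``curve pseudo-visibility'' as two order-theoretic axioms. Given a curve pseudo-visibility graph $G=G_\cL(K,V)$, cut the Jordan curve $K$ at a point that is not a vertex; reading the points of $V$ off along $K$ turns $G$ into an ordered graph with vertices $v_1<\dots<v_n$. The goal is then twofold: first, to extract from the geometry two properties of $(G,<)$ that no longer mention $\cL$ or $K$ --- call them \emph{(V1)} and \emph{(V2)} --- and second, to prove that \emph{every} ordered graph satisfying (V1) and (V2) with clique number $\omega$ has chromatic number at most $3\cdot4^{\omega-1}$. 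Theorem~\ref{thm:pseudo}, and hence Theorem~\ref{thm:poly} as its special case (polygon visibility graphs are curve visibility graphs, and curve visibility graphs are curve pseudo-visibility graphs), follows once one also checks, as in Section~\ref{sec:pseudo}, that curve pseudo-visibility graphs do satisfy (V1) and (V2).

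Establishing (V1) and (V2) is the geometric part and should be mostly topological bookkeeping. Both are local statements about the ordered graph. One of them records what it means for a chord to be ``blocked'': for an edge $v_av_c$ with $a<c$, the vertices strictly between $a$ and $c$ form a sub-instance living inside the sub-disk of $K$ bounded by the pseudosegment $\sigma_{ac}$ and the corresponding arc, and a funnel/geodesic structure inside that sub-disk ties these ``pocket'' vertices to $v_a$ and $v_c$. The other records the defining feature of pseudoline arrangements --- two pseudolines cross at most once, and two pseudosegments sharing an endpoint do not cross again --- as a restriction on how the edges of $G$ may interleave with respect to the order, in particular on configurations $v_a,v_b,v_c,v_d$ with $a<b<c<d$ whose edges cross. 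The verification amounts to tracking where a pseudosegment between two boundary points of a sub-disk can leave that sub-disk; that the class is hereditary and coincides with the class of induced subgraphs of pseudo-polygon visibility graphs lets us argue in whichever of the equivalent models is most convenient.

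The heart of the proof --- and the step I expect to be the main obstacle --- is showing that (V1) and (V2) \emph{suffice}: every ordered graph satisfying them with clique number $\omega$ uses at most $3\cdot4^{\omega-1}$ colours. I would argue by induction on $\omega$; the base case $\omega=1$ (no edges) is immediate, and the identity $3\cdot4^{\omega-1}=4\cdot(3\cdot4^{\omega-2})$ dictates that the inductive step must partition the vertex set into a bounded number of parts and use at most four copies of the inductive palette. The mechanism I would try is a left-to-right sweep: the funnel axiom assigns to each vertex a canonical predecessor to its left; this organises $V$ into parts according to the combinatorial type of these assignments, and the interleaving axiom forces that within each such part every maximum clique shares a common vertex, so the clique number drops to $\omega-1$ there and the inductive bound applies, while a constant number of residual parts are handled directly because they are unions of boundedly many transitively ordered (hence trivially colourable) pieces. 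Reusing the inductive palette across parts that do not interleave in the order, and driving the total multiplicity down to exactly $4$ rather than to some larger constant, is the delicate point, and it is here that (V1) and (V2) must be used together rather than separately.

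Finally, all the ingredients are algorithmic: the cut and the linear order, the funnel assignments, the partition into parts, the identification of the common vertex of the cliques in each part, and the recursive colourings are computed by straightforward sweeps over the ordered graph, so both $\omega$ and a colouring with $3\cdot4^{\omega-1}$ colours are produced in polynomial time, yielding the algorithmic statement of the abstract.
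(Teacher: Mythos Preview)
Your high-level framework is exactly the paper's: pass to an ordered graph via the cyclic order on $K$, isolate two order-theoretic axioms, verify them geometrically, and then prove the bound purely combinatorially for ordered graphs satisfying those axioms. But the concrete content you propose for both halves diverges from what the paper does, and in its current form does not add up to a proof.

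First, the axioms. The paper's (V1) is \emph{$\cH$-freeness}: there is no pair of non-adjacent vertices $u,v$ with a \emph{crossing sequence} (a chain $e_1,\dots,e_k$ of edges with $e_i$ crossing $e_{i+1}$) from $u$ to $v$ and another from $v$ to $u$. The paper's (V2) is \emph{ordered-hole-freeness}: no induced cycle $c_1\prec\dots\prec c_k$ of length $\geq 4$. Your descriptions --- a ``funnel/geodesic'' structure inside a chord-pocket, and a local restriction on a single crossing quadruple $a<b<c<d$ --- are not these. In particular, the single-quadruple condition you gesture at is the \emph{capped} property (if $ac,bd\in E$ then $ad\in E$), and curve pseudo-visibility graphs are \emph{not} capped in general; cappedness only appears after a nontrivial decomposition. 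Getting (V1) right matters: the geometric verification (Proposition~\ref{prop:H-free}) uses the chain structure of crossing sequences to build a pseudo-polygon whose only possibly concave articulation points are $u$ and $v$, and then invokes a convexity lemma.

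Second, and more seriously, the combinatorial half. You read $3\cdot 4^{\omega-1}=4\cdot(3\cdot 4^{\omega-2})$ and propose a single induction on $\omega$ with multiplicity $4$. The paper instead reads it as $3\cdot 4^{\omega-1}$ and does two independent steps. Step one (Section~\ref{sec:partition}): any $\cH$-free ordered graph admits a partition of its vertex set into \emph{three} parts, each inducing a \emph{capped} subgraph. This is where the factor $3$ comes from, and it is the technical heart --- a combinatorial analogue of Suri's window partition, driven by the notion of left/right/strongly reachable vertices relative to a ``valid segment''. Nothing in your sweep-with-canonical-predecessor sketch produces capped pieces or accounts for this $3$. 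Step two (Section~\ref{sec:capped}): any capped, ordered-hole-free graph with clique number $\omega$ is $4^{\omega-1}$-colourable. This is done not by a clique-drop partition of the vertices, but by an \emph{edge} decomposition into $\omega-1$ triangle-free capped ordered-hole-free graphs (peel off the non-``triangle-crossed'' edges, which drops $\omega$ by exactly one), and then a direct $4$-colouring of each triangle-free piece via a BFS-levels argument. Your ``every maximum clique in a part shares a common vertex'' mechanism is not what is used, and it is unclear how your sweep would guarantee it.

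In short: the plan is right, but the two load-bearing ideas --- the crossing-sequence axiom together with the three-way partition into capped graphs, and the edge-peeling decomposition of capped graphs --- are absent, and your proposed substitutes are not specified enough to carry the bound.
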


To prove Theorem~\ref{thm:pseudo} (and thus Theorem~\ref{thm:poly}), we turn our attention to ordered graphs, where an \emph{ordered graph} is a pair $(G,{\prec})$ such that $G$ is a graph and $\prec$ is a linear order on the vertices of $G$.
A curve pseudo-visibility graph comes with a natural linear order on the vertices (determined up to rotation), which makes it an ordered graph; it is the order in which the vertices are encountered when following the Jordan curve in the counterclockwise direction starting from an arbitrarily chosen vertex.
An ordered graph $(H,{\prec_H})$ is an (\emph{induced}) \emph{ordered subgraph} of an ordered graph $(G,{\prec})$ if $H$ is a subgraph (an induced subgraph, respectively) of $G$ and $\prec_H$ is the restriction of $\prec$ to the vertices of $H$.
In Section~\ref{sec:obstructions}, we provide two natural families of ordered obstructions to (that is, ordered graphs that cannot occur as induced ordered subgraphs of) curve pseudo-visibility graphs: the family $\cH$ that we define in Section~\ref{sec:obstructions} and the family of ordered holes (see Figure~\ref{fig:obstructions}), both easily verifiable in polynomial time.
We prove the following further generalization of Theorem~\ref{thm:pseudo}.

\begin{figure}[t]
\centering
\begin{tikzpicture}[scale=.8, every node/.style={inner sep=2, outer sep=0, draw, circle, fill=gray}]
  \draw[ultra thin] (0,0) circle (2) {};
  \node[label=left:$u$] (u) at (180:2) {};
  \node[label=right:$v$] (v) at (0:2) {};
  \foreach\i in {1,...,4}{
    \node[inner sep=1.5] (A\i) at (170+\i*36:2) {};
    \node[inner sep=1.5] (B\i) at (190+\i*36:2) {};
  }
  \draw[very thick] (u) -- (B1);
  \draw[very thick] (A1) -- (B2);
  \draw[very thick] (A2) -- (B3);
  \draw[very thick] (A3) -- (B4);
  \draw[very thick] (A4) -- (v);
  \node[inner sep=1.5] (C1) at (50:2) {};
  \node[inner sep=1.5] (C2) at (130:2) {};
  \node[inner sep=1.5] (D) at (90:2) {};
  \draw[very thick] (v) -- (D);
  \draw[very thick] (C1) -- (C2);
  \draw[very thick] (D) -- (u);
  \draw[very thick, dashed] (u) -- (v);
\end{tikzpicture}\hskip 1.5cm
\begin{tikzpicture}[scale=.8, every node/.style={inner sep=2, outer sep=0, draw, circle, fill=gray}]
  \draw[ultra thin] (0,0) circle (2) {};
  \foreach\i in {1,...,5}{\node (A\i) at (\i*72:2) {};}
  \draw[very thick] (A1) -- (A2) -- (A3) -- (A4) -- (A5) -- (A1);
  \draw[very thick, dashed] (A1) -- (A3) -- (A5) -- (A2) -- (A4) -- (A1);
\end{tikzpicture}\hskip 1.5cm
\begin{tikzpicture}[scale=.6, every node/.style={inner sep=2, outer sep=0, draw, circle, fill=gray}]
  \node[label=below:$\vphantom{b}a$] (a) at (0,0) {};
  \node[label=below:$b$] (b) at (2,0) {};
  \node[label=below:$\vphantom{b}c$] (c) at (4,0) {};
  \node[label=below:$d$] (d) at (6,0) {};
  \draw[thick] (a) to [bend left=50] (c);
  \draw[thick] (b) to [bend left=50] (d);
  \draw[thick, dashed] (a) to [bend left=50] (d);
\end{tikzpicture}
\caption{A graph in $\cH$ (left), an ordered hole (middle), and the forbidden configuration for a capped graph (right).
Dashed lines indicate non-edges.
The pairs of vertices where no lines are drawn can be edges or non-edges.}
\label{fig:obstructions}
\end{figure}
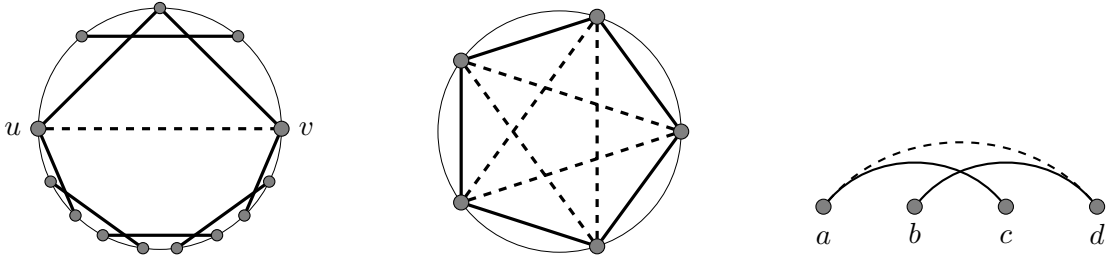

\begin{theorem}
\label{thm:H-free}
Every\/ $\cH$-free ordered graph with clique number\/ $\omega\geq 2$ has chromatic number at most\/ $3\cdot 4^\omega(\omega-1)$ in general and at most\/ $3\cdot 4^{\omega-1}$ when also ordered-hole-free.
Moreover, there is a polynomial-time algorithm that takes in an\/ $\cH$-free ordered graph and computes its clique number\/ $\omega$ and a colouring with the claimed number of colours.
\end{theorem}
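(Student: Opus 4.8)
The plan is to prove both bounds by induction on the clique number $\omega$, using that $\cH$-freeness and ordered-hole-freeness are both inherited by induced ordered subgraphs, so that we may freely pass to induced subgraphs. Write $g(\omega)=3\cdot 4^{\omega-1}$ for the target in the ordered-hole-free case and $f(\omega)=3\cdot 4^{\omega}(\omega-1)$ in general. The backbone of the induction is a \emph{clique-depth} function on $(G,{\prec})$: assign to each vertex $v$ the largest size $d(v)$ of a clique $Q$ with $v\in Q$ and $v=\max_{\prec}Q$. Then $\max_{v}d(v)=\omega$, so this function already exposes the clique number; and I expect $\cH$-freeness to supply a ``staircase'' property (an optimal clique with top $u$ extends, at a neighbour $v\succ u$, by all but boundedly few of its vertices) that makes $d$ computable by a left-to-right sweep in polynomial time. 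The structural goal is then to cut $V(G)$ into a few classes on each of which the clique number has dropped to $\omega-1$, so that the inductive hypothesis applies.

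The heart of the proof is a decomposition lemma in the ordered-hole-free case: \emph{the vertex set of an $\cH$-free, ordered-hole-free ordered graph with clique number $\omega$ partitions into four parts, each inducing a graph with clique number at most $\omega-1$}, whence $g(\omega)\le 4\,g(\omega-1)$. I expect this partition to be built from the depth function in two stages: a coarsening of $d$ (a parity- or residue-type split, so that within each class the depths realised along a clique are spread out), followed by a refinement recording on which side of $v$ in $\prec$ its witnessing clique predominantly lies. $\cH$-freeness should be what controls the behaviour of $d$ along a clique, so that no $\omega$-clique survives the first stage, and ordered-hole-freeness should be what keeps the crossing pattern of the relevant edges interval-like, so that the second stage is clean. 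Getting the constant to be exactly $4$ — equivalently, ruling out via the two forbidden families \emph{every} way an $\omega$-clique could meet all four classes — is where I expect the real work to lie; this is the main obstacle.

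For the general (not necessarily ordered-hole-free) case, $\cH$-freeness alone should yield a partition of $V(G)$ into $4(\omega-1)$ parts, each inducing an ordered-hole-free graph of clique number at most $\omega$; colouring each such part with the bound $g(\omega)$ just obtained gives $\chi(G)\le 4(\omega-1)\,g(\omega)=3\cdot 4^{\omega}(\omega-1)=f(\omega)$, which may equivalently be organised as the lossier recursion $f(\omega)\le 4\,f(\omega-1)+4\,g(\omega)$. The intuition is simply that $\cH$-freeness limits the interaction of ordered holes enough that this many parts suffice to break all of them. The base case $\omega=2$ (triangle-free) is handled directly: here ordered-hole-freeness removes the monotone induced cycles and $\cH$-freeness caps how far two crossing edges can independently reach, so the edge set is covered by a constant number of interval- or overlap-graph-like pieces, yielding $\chi\le 12$, and $\chi\le 48$ without the ordered-hole hypothesis; assembling the recursions then gives $g(\omega)\le 3\cdot 4^{\omega-1}$ and $f(\omega)\le 3\cdot 4^{\omega}(\omega-1)$.

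Everything is algorithmic: $d$ and the four-way partition are obtained by a left-to-right sweep and by properly colouring auxiliary interval/overlap graphs, both polynomial; the $4(\omega-1)$-partition in the general case is likewise explicit; the base-case colouring is a positional rule; and the clique number is read off as $\max_{v}d(v)$. Composing these over the at most $\omega\le n$ levels of the recursion gives the claimed polynomial-time algorithm that outputs $\omega$ together with a colouring of the promised size. To repeat, the crux is the decomposition lemma of the second paragraph — fixing the right notions of ``depth'' and ``side'' and showing that the two forbidden families really do force $d$ to climb along cliques and the crossing structure to be interval-like, so that four classes suffice.
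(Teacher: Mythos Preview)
Your proposal is a plan rather than a proof: both structural lemmas on which the induction rests are stated as hopes (``I expect'', ``should'') and never argued. For the central one --- that an $\cH$-free ordered-hole-free ordered graph of clique number $\omega$ admits a vertex partition into four parts each of clique number at most $\omega-1$ --- the depth heuristic does not obviously deliver what you need. Along an $\omega$-clique $v_1\prec\cdots\prec v_\omega$ one has $d(v_i)\ge i$, but nothing in $\cH$-freeness forces these values to be distinct (they could all equal $\omega$), so a parity-of-depth split need not separate the clique at all; and you give no mechanism by which the ``side'' refinement repairs this. The second lemma, partitioning an arbitrary $\cH$-free ordered graph into $4(\omega-1)$ ordered-hole-free parts, is even more speculative: ordered holes can have arbitrary length, and you offer no reason why $\cH$-freeness alone should bound how many classes are needed to destroy them all. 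The triangle-free base case is likewise only asserted. You yourself flag the four-part decomposition as ``the main obstacle''; as written, that obstacle is not overcome, and the argument does not go through.

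The paper takes a different route, and does not induct on $\omega$ at the level of $\cH$-free graphs. Proposition~\ref{prop:partition} partitions the vertex set of any $\cH$-free ordered graph into \emph{three} sets, each inducing a \emph{capped} graph (vertices $a\prec b\prec c\prec d$ with $ac,bd\in E$ force $ad\in E$); this is where $\cH$-freeness is actually spent, via a recursive window-partition driven by crossing sequences (Lemma~\ref{lem:induceCapped}, Proposition~\ref{prop:extend}), and it supplies the factor $3$. The remaining work is Theorem~\ref{thm:capped} on capped graphs: their \emph{edge set} decomposes into $\omega-1$ triangle-free capped graphs (Proposition~\ref{prop:mainDecomp}), and each such piece is $4$-colourable in the ordered-hole-free case by a BFS-levels argument (Claim~\ref{claim:triangleFree}); in general the uncrossed edges of each piece are outerplanar (Claim~\ref{claim:outerplanar}), contributing the extra factor $4(\omega-1)$. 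The clique number is computed via Lemma~\ref{lem:H-free-clique} and Proposition~\ref{prop:mainDecomp}, not by your $d(v)$.
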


Our proofs of Theorems~\ref{thm:poly}--\ref{thm:H-free} ultimately lead to the class of capped graphs, which may be of independent interest.
A \emph{capped graph} is an ordered graph $(G,{\prec})$ such that for any four vertices $a\prec b\prec c\prec d$, if $ac,bd\in E(G)$, then $ad\in E(G)$; see Figure~\ref{fig:obstructions} (right).
We note that this condition has been previously studied for terrain visibility graphs~\cite{Abello95terrain,Ameer2020}, where it is called the ``$X$-property''.
In Section~\ref{sec:partition}, we show that the vertices of any $\cH$-free ordered graph can be partitioned into three sets each inducing a capped graph.
This way, Theorem~\ref{thm:H-free} becomes a corollary to the following.

\begin{theorem}
\label{thm:capped}
Every capped graph with clique number\/ $\omega\geq 2$ has chromatic number at most\/ $4^\omega(\omega-1)$ in general and at most\/ $4^{\omega-1}$ when also ordered-hole-free.
Moreover, there is a polynomial-time algorithm that takes in a capped graph and computes its clique number\/ $\omega$ and a colouring with the claimed number of colours.
\end{theorem}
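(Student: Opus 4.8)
The plan is to prove Theorem~\ref{thm:capped} by induction on the clique number $\omega$, reducing the inductive step to structural partition lemmas. It is convenient to identify each edge $uv$ with $u\prec v$ with the closed interval $[u,v]$ of the positions of its endpoints; the capped condition then says precisely that the family of edge-intervals is closed under merging two \emph{crossing} intervals, meaning two that overlap with neither containing the other. Two consequences are used throughout. \emph{Clique extension}: if $C$ is a clique and $v$ is $\prec$-after every vertex of $C$, then $v$ is complete to $C$ as soon as it is adjacent to the two $\prec$-largest vertices of $C$; indeed, if $w\prec u$ are those two vertices and $vw,vu\in E$, then for every $c\in C$ with $c\prec w$ the intervals $[c,u]$ and $[w,v]$ cross, forcing $cv\in E$. \emph{Pocket decomposition}: let $r$ be the $\prec$-minimal vertex and $N(r)=\{r_1\prec\dots\prec r_k\}$; if $x\succ r$ has a neighbour $y\succ x$ with $x\prec r_i\prec y$ for some $i$, then the intervals $[r,r_i]$ and $[x,y]$ cross, so $ry\in E$. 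Consequently every edge not incident to $\{r\}\cup N(r)$ has both endpoints strictly inside one of the pockets $(r,r_1),(r_1,r_2),\dots,(r_{k-1},r_k),(r_k,\cdot)$; distinct pockets are anticomplete, each pocket induces a capped graph on fewer vertices, and $N(r)$ contains no $\omega$-clique since together with $r$ it would be an $(\omega+1)$-clique.

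For ordered-hole-free capped graphs the target is the following partition lemma: the vertex set can be partitioned into four parts, each inducing a subgraph with clique number at most $\omega-1$. Together with the induction hypothesis this yields $\chi\leq 4\cdot 4^{(\omega-1)-1}=4^{\omega-1}$; the base case $\omega=2$ is a triangle-free ordered-hole-free capped graph, which has no induced odd cycle (the capped condition forces an induced cycle of length at least five to be an ordered hole), hence no odd cycle at all, hence is bipartite. To build the partition I would use the pocket decomposition and recurse on the number of vertices: partition each pocket into four parts of smaller clique number, combine these across pockets (which is legitimate because distinct pockets are anticomplete), and then distribute $\{r\}\cup N(r)$ among the four classes so that no $\omega$-clique of $G$ becomes monochromatic. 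Since distinct pockets are anticomplete, every clique meeting $\{r\}\cup N(r)$ lies inside $\{r\}\cup N(r)\cup P$ for a single pocket $P$, so the problem reduces to understanding how an $\omega$-clique can split between $N(r)$ and one pocket. I expect this to be the main obstacle of the whole argument, and the point where ordered-hole-freeness is genuinely used.

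When ordered holes are permitted the bound degrades to $4^\omega(\omega-1)$. Here I would prove the weaker partition lemma that the vertex set of a capped graph with clique number $\omega$ splits into four parts, each the union of an ordered-hole-free induced subgraph and an induced subgraph with clique number at most $\omega-1$: colouring the former with $4^{\omega-1}$ colours (the ordered-hole-free case of the theorem, already established for this $\omega$) and the latter with $4^{\omega-1}(\omega-2)$ colours (the induction hypothesis) gives $\chi\leq 4\bigl(4^{\omega-1}+4^{\omega-1}(\omega-2)\bigr)=4^\omega(\omega-1)$, and the base case $\omega=2$ is handled directly. This is again the pocket analysis, now peeling only an ordered-hole-free chunk off each part --- informally, the vertices that $r$ can reach before an ordered hole intervenes --- rather than reducing $\omega$ everywhere; making that chunk precise and checking the structure of what remains is the delicate point in this case.

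All the steps are constructive. The clique number is computed by a dynamic program over pairs: for each edge $uv$ with $u\prec v$, let $T(u,v)$ be the maximum size of a clique in which $u$ and $v$ are the two $\prec$-largest vertices; clique extension gives $T(u,v)=1+\max\{T(w,u):w\prec u,\ wu\in E,\ wv\in E\}$, read as $T(u,v)=2$ when the set is empty, and then $\omega$ equals $\max_{uv\in E}T(u,v)$ if $G$ has an edge and $1$ otherwise, with a maximum clique recovered by backtracking. Cappedness and the presence of an ordered hole are testable in polynomial time, so the algorithm can select the correct bound, and the partition lemmas, the base cases, and the outer induction on $\omega$ are all explicit, so the whole colouring runs in polynomial time. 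As indicated, the crux is the ordered-hole-free partition lemma --- concretely, analysing how an $\omega$-clique straddles $N(r)$ and a single pocket --- with the handling of ordered holes in the general case a close second.
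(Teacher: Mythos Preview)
Your base case is false. You claim that in a capped graph every induced cycle of length at least five must be an ordered hole, hence a triangle-free, capped, ordered-hole-free graph is bipartite. But consider $C_5$ with cycle $a\,c\,e\,b\,d\,a$ and linear order $a\prec b\prec e\prec c\prec d$: the edges are $ac,ad,bd,be,ce$, and one checks directly that the only $4$-tuple $p\prec q\prec r\prec s$ with $pr,qs\in E$ is $(a,b,c,d)$, where $ad\in E$ as required. So this $C_5$ is capped; it is also ordered-hole-free (the five vertices do not form an ordered $5$-hole since $ab\notin E$, and any four vertices of $C_5$ induce only a $P_4$). Thus triangle-free, capped, ordered-hole-free graphs need not be bipartite, and your $\omega=2$ anchor for the induction collapses. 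The paper handles exactly this case with a genuinely different argument: it shows that in such a graph the BFS layers from the $\prec$-smallest vertex of each component are bipartite, yielding a $4$-colouring (not a $2$-colouring); this is where ordered-hole-freeness is really used, and your counterexample shows the $4$ cannot be improved to $2$ in general.

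Beyond the base case, your central structural step --- partitioning $V(G)$ into four parts of clique number at most $\omega-1$ --- is never established: you set up the pocket decomposition correctly, but the distribution of $\{r\}\cup N(r)$ among the four recursively obtained parts without creating a monochromatic $\omega$-clique is left as ``the main obstacle'', and it is not clear this partition even exists. The paper sidesteps all of this with an \emph{edge} decomposition rather than a vertex partition: calling an edge $uv$ (with $u\prec v$) \emph{triangle-crossed} if $v$ lies in a triangle whose other two vertices are $\preceq u$, one shows that removing all non-triangle-crossed edges leaves a capped graph of clique number exactly $\omega-1$, while the removed edges form a triangle-free capped graph. Iterating gives $\omega-1$ triangle-free capped pieces (each ordered-hole-free if $G$ is), each $4$-colourable by the BFS argument, and the product colouring gives $4^{\omega-1}$. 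The general bound comes from first stripping off, from each triangle-free piece, the edges with no crossing --- an outerplanar, hence degenerate, layer --- after which what remains is ordered-hole-free.
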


We prove Theorem~\ref{thm:capped} in Section~\ref{sec:capped}.
Any improvement on the bounds in Theorem~\ref{thm:capped} would immediately imply corresponding improvements in Theorems~\ref{thm:poly}--\ref{thm:H-free}.
A major open problem for most known $\chi$-bounded classes of graphs is whether they are polynomially $\chi$-bounded, that is, whether the chromatic number of the graphs in the class is bounded by a polynomial function of their clique number.
Esperet~\cite{esperet2017habilitation} conjectured that \emph{every} hereditary class of graphs that is $\chi$-bounded is polynomially $\chi$-bounded.
While we have little faith in this conjecture, we do expect that it holds for capped graphs (and, consequently, for the graphs considered in Theorems~\ref{thm:poly}--\ref{thm:H-free}).

\begin{conjecture}
There is a polynomial function\/ $p$ such that every capped graph with clique number\/ $\omega$ has chromatic number at most\/ $p(\omega)$.
\end{conjecture}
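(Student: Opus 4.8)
The plan is to strengthen the recursion underlying Theorem~\ref{thm:capped}. That argument loses a multiplicative constant (a factor of $4$) each time the clique number drops by one, which is precisely the source of the exponential bound $4^{\omega-1}$; to obtain a polynomial bound one must replace this multiplicative loss by an additive, or at worst polynomial, loss per unit of $\omega$. Concretely, writing $f(\omega)$ for the maximum chromatic number of a capped graph with clique number $\omega$, I would aim to prove a \emph{local density lemma}: there is a polynomial $q$ such that every capped graph $(G,\prec)$ contains a vertex $v$ whose left-neighbourhood $N^-(v)=\{u\prec v: uv\in E(G)\}$ satisfies $\chi\bigl(G[N^-(v)]\bigr)\ge\chi(G)-q(\omega)$. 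Since $G[N^-(v)]$ is again capped (the class is closed under taking induced ordered subgraphs) and has clique number at most $\omega-1$, such a lemma immediately yields $f(\omega)\le f(\omega-1)+q(\omega)$ and hence a polynomial bound on $f(\omega)$.

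To set this up I would first fix the left-to-right sweep and define a Gyárfás-type levelling: starting from the $\prec$-greatest vertex and repeatedly passing to left-neighbourhoods, assign to each vertex a level, and argue that a single level induces a capped graph of bounded chromatic number. The content then concentrates in controlling the interaction between consecutive levels. Here I would exploit the interval reading of the X-property: associate to each edge $uv$ (with $u\prec v$) the interval $[u,v]$, so that the capped condition says precisely that two partially overlapping intervals $[a,c]$ and $[b,d]$ with $a\prec b\prec c\prec d$ force the spanning edge $ad$. The aim is to use this closure property to cover $N^-(v)$, for a well-chosen $v$, by few cliques, or to show directly that the colours used on $N^-(v)$ already account for all but $q(\omega)$ of the colours of $G$; a clique of size exceeding $\omega$ would then be forced, a contradiction. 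I would settle the ordered-hole-free case first, where the sharper bound $4^{\omega-1}$ suggests cleaner structure, and only then attempt the general case by quarantining ordered holes.

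The main obstacle is exactly this local density step, and it is what keeps the conjecture open. All known colourings of capped graphs, including the one behind Theorem~\ref{thm:capped}, split the palette into a bounded number of parts at each nesting level and therefore pay a constant factor per clique-level; converting this into an additive or polynomial loss requires genuinely new control over how many pairwise ``crossing'' edges can fail to be covered by $\mathrm{poly}(\omega)$ cliques. I would keep in reserve two alternative routes should the direct approach stall: showing that capped graphs lie inside a class already known to be polynomially $\chi$-bounded (for instance via bounded VC dimension of the neighbourhood hypergraph, or via a reduction of the X-property closure to an interval or circular-arc structure of bounded overlap), or bounding an appropriate width parameter and invoking a general polynomial $\chi$-boundedness theorem. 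Absent such a structural insight, the multiplicative barrier described above is the crux, and overcoming it is precisely the difficulty this conjecture records.
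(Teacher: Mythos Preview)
The statement you are addressing is a \emph{conjecture} in the paper, not a theorem: the authors explicitly leave it open and offer no proof. There is therefore nothing in the paper to compare your attempt against.

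Your proposal is not a proof but a research plan, and you say so yourself: the ``local density lemma'' you propose---finding a vertex $v$ whose left-neighbourhood already accounts for all but $q(\omega)$ colours---is exactly the step you admit ``keeps the conjecture open''. Everything hinges on that lemma, and you give no argument for it beyond the hope that the interval/closure reading of the capped condition will supply one. The alternative routes you list (embedding capped graphs in a known polynomially $\chi$-bounded class, or bounding a width parameter) are likewise speculative. So what you have written is an accurate diagnosis of where the difficulty lies and a reasonable menu of approaches to try, but it is not a proof, and it should not be presented as one. If you intend this as commentary on the conjecture rather than a proof, that is fine; just be explicit that the central lemma is unproved and that the conjecture remains open.
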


While our proof of Theorem~\ref{thm:capped} is direct, we remark that a recent result of Scott and Seymour~\cite{banana20} implies $\chi$-boundedness (with a much weaker bound) of the significantly broader class of $X$-free ordered graphs, that is, ordered graphs excluding the four-vertex ordered graph $X$ illustrated in Figure~\ref{fig:banana_X} (right) as an induced ordered subgraph.
In particular, every capped graph is $X$-free.
Tomon~\cite{Tomon-personal} conjectured that the class of $X$-free ordered graphs is $\chi$-bounded.
This statement implies not only Theorem~\ref{thm:capped} but also the theorem of Rok and Walczak~\cite{RWouterstring} that so-called outerstring graphs are $\chi$-bounded.
This is because outerstring graphs (with the natural linear order on the vertices) are easily seen to be $X$-free.
Scott and Seymour~\cite{banana20} proved that for every graph $H$ that is a ``banana'' (or more generally---a ``banana tree''), the class of graphs excluding all subdivisions of $H$ as induced subgraphs is $\chi$-bounded.
Figure~\ref{fig:banana_X} (left) shows an example of a ``banana'' $B_4$ with the property that no subdivision of $B_4$ can be made $X$-free under any order of the vertices.
This shows that the aforementioned result of Scott and Seymour implies Tomon's conjecture.
We present more details in Section~\ref{sec:bananas}.

\begin{figure}[t]
\centering
\begin{tikzpicture}[scale=.5, every node/.style={inner sep=2, outer sep=0, draw, circle, fill=gray}]
  \node[label=left:$s$] (s) at (-4,0) {};
  \node[label=right:$t$] (t) at (4,0) {};
  \node[label=above:$x_1$] (X1) at (-2,2.5) {};
  \node[label=above:$x_2$] (X2) at (0,2.5) {};
  \node[label=above:$x_3$] (X3) at (2,2.5) {};
  \node[label=above:$y_1$] (Y1) at (-2,0) {};
  \node[label=above:$y_2$] (Y2) at (0,0) {};
  \node[label=above:$y_3$] (Y3) at (2,0) {};
  \node[label=above:$z_1$] (Z1) at (-2,-2.5) {};
  \node[label=above:$z_2$] (Z2) at (0,-2.5) {};
  \node[label=above:$z_3$] (Z3) at (2,-2.5) {};
  \draw[very thick] (s) -- (X1) -- (X2) -- (X3) -- (t);
  \draw[very thick] (s) -- (Y1) -- (Y2) -- (Y3) -- (t);
  \draw[very thick] (s) -- (Z1) -- (Z2) -- (Z3) -- (t);
\end{tikzpicture}\hskip 1.5cm
\begin{tikzpicture}[scale=.7, every node/.style={inner sep=2, outer sep=0, draw, circle, fill=gray}]
  \node[label=below:$\vphantom{b}a$] (a) at (0,0) {};
  \node[label=below:$b$] (b) at (2,0) {};
  \node[label=below:$\vphantom{b}c$] (c) at (4,0) {};
  \node[label=below:$d$] (d) at (6,0) {};
  \draw[thick] (a) to [bend left=50] (c);
  \draw[thick] (b) to [bend left=50] (d);
  \draw[thick, dashed] (a) to [bend left=60] (d);
  \draw[thick, dashed] (a) to [bend left=40] (b);
  \draw[thick, dashed] (b) to [bend left=40] (c);
  \draw[thick, dashed] (c) to [bend left=40] (d);
\end{tikzpicture}
\caption{The banana $B_4$ (left) and the ordered graph $X$ (right).}
\label{fig:banana_X}
\end{figure}
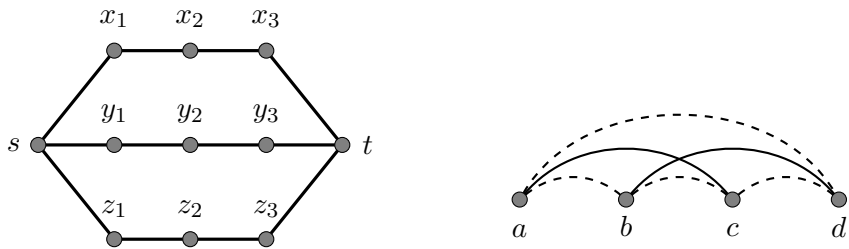

\begin{theorem}
\label{thm:X-free}
The class of\/ $X$-free ordered graphs is\/ $\chi$-bounded.
\end{theorem}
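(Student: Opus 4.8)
The plan is to deduce Theorem~\ref{thm:X-free} from the theorem of Scott and Seymour~\cite{banana20} that for every banana $H$ the class of graphs with no induced subgraph isomorphic to a subdivision of $H$ is $\chi$-bounded, applied with $H=B_4$ (Figure~\ref{fig:banana_X}, left). Everything then reduces to the combinatorial claim that \emph{for every subdivision $\Theta$ of $B_4$ and every linear order $\prec$ on $V(\Theta)$, the ordered graph $(\Theta,{\prec})$ contains $X$ as an induced ordered subgraph}. Granting this, Theorem~\ref{thm:X-free} is immediate: if $(G,{\prec})$ is $X$-free and $G$ contained an induced subgraph $\Theta$ isomorphic to a subdivision of $B_4$, then $\Theta$ with the order inherited from $\prec$ would contain an induced $X$, which would be induced in $(G,{\prec})$ as well since $\Theta$ is induced in $G$; hence $G$ excludes all induced subdivisions of $B_4$, and the Scott--Seymour theorem bounds $\chi(G)$ by a function of $\omega(G)$.

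To prove the claim, recall that a subdivision $\Theta$ of $B_4$ is precisely two vertices $s,t$ joined by three internally disjoint induced paths $A,B,C$, each with at least four edges, and with no further edges; in particular $s\not\sim t$. For $P\in\{A,B,C\}$ let $x_P$ and $y_P$ be the neighbours of $s$ and of $t$ on $P$. Since the paths are internally disjoint, induced, and of length at least four, one checks that the six vertices $x_A,x_B,x_C,y_A,y_B,y_C$ are pairwise non-adjacent, while $s$ is adjacent in $\Theta$ to exactly $x_A,x_B,x_C$ and $t$ to exactly $y_A,y_B,y_C$. It follows that several explicit families of edge-pairs of $\Theta$ induce $2K_2$, the unordered graph underlying $X$ --- for instance $\{s,x_P\}$ with $\{t,y_Q\}$ for distinct $P,Q$, and $\{s,x_P\}$ with $\{t,y_P\}$, and two internal edges lying on distinct paths, and an end-edge of one path together with a suitable internal edge of another. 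Hence, for a fixed order $\prec$, it suffices to find two edges from these families whose $\prec$-intervals \emph{cross}.

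Fix $\prec$ and assume $s\prec t$ by symmetry. Classify each of $x_A,x_B,x_C,y_A,y_B,y_C$ according to which of the intervals $(-\infty,s)$, $(s,t)$, $(t,+\infty)$ contains it, and record the relative $\prec$-order of the vertices inside each of these three parts. This coarse information concerns only boundedly many vertices, independently of the lengths of $A,B,C$, so there are finitely many cases, and in each one we exhibit a crossing pair of the above form. A representative case: if some distinct $P,Q$ satisfy $s\prec y_Q\prec x_P\prec t$, then $\{s,x_P\}$ crosses $\{t,y_Q\}$ and the vertices $s\prec y_Q\prec x_P\prec t$ induce $X$. The slack needed to cover all configurations comes from the three available choices of the ordered pair $(P,Q)$ and from the freedom to replace an end-edge by an internal edge located one or two steps into another path (each path has at least three internal vertices). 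The length-four bound is genuinely needed here: the analogous statement fails for theta graphs with shorter paths, e.g.\ $K_{2,3}$ has no induced $X$ under any order.

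The Scott--Seymour result is used verbatim, so the only real obstacle is the finite --- though somewhat lengthy --- case analysis proving the combinatorial claim. The most delicate configurations are those in which many of the $x_P$ and $y_Q$ lie on the same side of both $s$ and $t$; there one cannot finish with two end-edges and must instead pair an end-edge with an internal edge buried a step or two inside another path.
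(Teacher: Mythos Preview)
Your high-level plan matches the paper's exactly: reduce to the Scott--Seymour theorem by showing that no subdivision of $B_4$ can be made $X$-free under any order. The paper's execution of this combinatorial claim (its Proposition~\ref{prop:banana}) is, however, both shorter and structurally different from what you sketch.

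Rather than attacking arbitrary subdivisions directly, the paper proceeds in two steps. First, it shows that $B_4$ itself cannot be made $X$-free under any order; since $B_4$ has only eleven vertices this is a genuinely finite check, and the paper shortens it further by noting that $X$-freeness is invariant under \emph{cyclic rotation} of $\prec$ (not just reversal), which lets one normalize so that each of $s,x_1,y_1,z_1$ precedes each of $t,x_3,y_3,z_3$. Second, it proves a one-line suppression lemma: if $v$ is a degree-$2$ vertex with non-adjacent neighbours $u,w$ in an $X$-free ordered graph, then deleting $v$ and adding the edge $uw$ again yields an $X$-free ordered graph (any copy of $X$ using $uw$ lifts to one using $uv$ or $vw$). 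Iterating this reduces every subdivision of $B_4$ to $B_4$ itself, so the first step finishes the proof.

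Your direct approach can be made to work, but the case analysis you defer is not as clean as you suggest. Knowing only the relative positions of $s,t$ and their six neighbours is not enough: already in the paper's $B_4$ argument the decisive vertex is the \emph{middle} vertex $y_2$ of one path, and in a long subdivision the vertex one or two steps in from an end need not sit anywhere useful---its position in $\prec$ is unconstrained by the positions of the eight vertices you record. To push your argument through you would in effect have to discover and use the suppression lemma inside the cases. The paper's route isolates that step and applies it once.
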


We conclude the introduction with a brief literature review in order to place Theorems \ref{thm:poly}--\ref{thm:capped} and~\ref{thm:X-free} in context.

\subsection*{Geometric graph classes and $\chi$-boundedness}

Various classic examples of $\chi$-bounded graph classes are defined in terms of geometric representations.
For instance, intersection graphs of axis-parallel rectangles~\cite{Asplud1960rectangle} and circle graphs~\cite{gyarfas1985chromatic} are $\chi$-bounded.
Most of the literature in this direction focuses on intersection or disjointness graphs of objects in the plane.
While the class of intersection graphs of curves in the plane is not itself $\chi$-bounded~\cite{pawlik2014triangle}, some very general subclasses are~\cite{chudnovsky2016induced,RokWalczak19}.
There are also very precise results for disjointness graphs of certain kinds of curves in the plane~\cite{PachTomon20}.

Less is known about $\chi$-boundedness of visibility graphs, even though various kinds of such graphs have been considered in the literature---see~\cite{survey13} for a survey.
Kára, Pór, and Wood~\cite{KPW05} conjectured that the class of point visibility graphs is $\chi$-bounded, but this was disproven by Pfender~\cite{Pfender08}.
Some types of bar visibility graphs are related to interval graphs~\cite{barKVis} and planar graphs~\cite{LMW87} and are therefore known to be $\chi$-bounded.

Axenovich, Rollin, and Ueckerdt~\cite{axenovich2018chromatic} considered the problem of whether ordered graphs excluding a fixed ordered graph $(H,{\prec})$ as an ordered subgraph (not necessarily induced) have bounded chromatic number; they showed various cases of $(H,{\prec})$ for which the answers are positive and negative.
In particular, the answer is negative if $H$ contains a cycle (as it is for unordered graphs), but they showed it is also negative for some acyclic ordered graphs $(H,{\prec})$.
Pach and Tomon~\cite{PachTomon20} used some specific classes of forbidden induced ordered graphs as a tool for studying $\chi$-boundedness of disjointness graphs of curves.
Max point-tolerance graphs~\cite{maxTolerance} and classes of graphs of bounded twin-width~\cite{twinWidthIII} are also known to be $\chi$-bounded and have well-understood characterizations as ordered graphs.

\subsection*{Algorithmic considerations}

The class of curve pseudo-visibility graphs is hereditary, whereas most well-known classes of visibility graphs are not, including the classes of point visibility graphs, polygon visibility graphs, and pseudo-visibility graphs.
The condition of the class being hereditary is very natural to impose when studying $\chi$-boundedness and implies that curve pseudo-visibility graphs can be characterized by excluded induced (ordered) subgraphs.
There has been a good deal of work on the characterization and recognition problems, but for point visibility graphs and polygon visibility graphs the problems appear to be hard (see~\cite{CHReals17} and~\cite{Ghosh97}).

These difficult characterization problems tend to become tractable, and have more natural solutions, in the ``pseudo-visibility setting''~\cite{Abello95terrain,AKMatroid02,Evans2015,ORS97}.
This is due to the connection between stretchability of pseudoline arrangements and representability of rank-$3$ oriented matroids.
So the pseudo-visibility setting is more combinatorial because it suffices to find the associated rank-$3$ oriented matroid without worrying about representability.
Representability provides a real difficulty; the pseudo-visibility setting is strictly more general for polygon visibility graphs~\cite{Streinu05}, even when certain restrictions are imposed~\cite{Ameer2020}.

It is an interesting problem to characterize ordered curve pseudo-visibility graphs by excluded induced ordered subgraphs.
The two aforementioned classes of obstructions ($\cH$ and the ordered holes) are likely to be insufficient for a full characterization---they roughly correspond to the first two of the four necessary conditions for an ordered graph to be a polygon visibility graph described by Ghosh~\cite{Ghosh97}.
Nevertheless, we conjecture the following.

\begin{conjecture}
Ordered curve pseudo-visibility graphs can be recognized in polynomial time.
\end{conjecture}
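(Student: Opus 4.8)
The plan is to reduce recognition to an oriented-matroid existence problem and then to solve that problem by an incremental, divide-and-conquer construction along the curve.

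First I would pass from geometry to combinatorics. Given an ordered graph $(G,{\prec})$ with vertices $v_1\prec\dots\prec v_n$, one may apply a homeomorphism of the sphere carrying the Jordan curve $K$ to a round circle, with $V$ on it in the cyclic order $v_1,\dots,v_n$; the pseudolines of $\cL$ are dragged along and remain pseudolines, so no generality is lost. Using the equivalence of curve pseudo-visibility graphs with induced subgraphs of pseudo-polygon visibility graphs (Section~\ref{sec:pseudo}) together with the topological representation theorem, a realization of $(G,{\prec})$ amounts to the following: a rank-$3$ oriented matroid $\mathcal{M}$ on a ground set $V^{+}\supseteq V$, a cyclic order on $V^{+}$ that restricts to $(v_1,\dots,v_n)$ on $V$, and a choice of ``interior'' region, such that the closed pseudo-polygon on $V^{+}$ is simple in $\mathcal{M}$ and, for every pair, the pseudo-segment $v_iv_j$ lies in the interior exactly when $v_iv_j\in E(G)$. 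Each requirement is a condition on the chirotope of $\mathcal{M}$, so recognition becomes the question \emph{whether there is a rank-$3$ oriented matroid satisfying a prescribed system of sign constraints}. The first genuine step is to make this reduction precise and, crucially, to bound $|V^{+}|$ linearly in $n$ --- morally one auxiliary vertex per ``pocket'' of $K$ --- so that the search space is controlled.

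The second step, which is the heart of the matter, is to decide that existence problem in polynomial time. In the \emph{realizable} setting the same problem is polygon visibility graph recognition, which appears to be hard~\cite{CHReals17}; but the difficulty there is representability --- stretchability of the pseudoline arrangement --- and this is exactly what disappears once one works with oriented matroids rather than point sets (cf.\ \cite{Streinu05}). I would therefore try to exploit the recursive structure of visibility directly, building $\mathcal{M}$ by divide-and-conquer along the curve in the spirit of the decomposition underlying Theorem~\ref{thm:capped}: split $v_1,\dots,v_n$ at a suitable dominating vertex or chord, recursively realize the two sub-instances as oriented matroids on the corresponding sub-ground-sets, and amalgamate the pieces. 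The partition of an $\cH$-free ordered graph into three capped graphs (Section~\ref{sec:partition}) and the structure of capped graphs should be used to pin down the admissible local configurations, while $\cH$-freeness, ordered-hole-freeness, and the remaining necessary conditions of Ghosh~\cite{Ghosh97} should be checked at the outset to discard infeasible inputs. A natural intermediate milestone is the terrain case, where the ground configuration is monotone and the relevant ordered graphs are, up to further axioms, the capped ones~\cite{Abello95terrain,Ameer2020}.

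I expect the amalgamation to be the principal obstacle. Gluing two rank-$3$ oriented matroids along a common sub-structure so that the result is again an oriented matroid realizing the required visibilities has no general polynomial-time procedure, and deciding whether such a gluing exists can itself be intractable for arbitrary instances. What is needed is a structural lemma special to visibility configurations: that the interface between the two sides is governed by a bounded amount of data --- a ``silhouette'', or fan of pseudolines through the cut vertex --- that this data can be enumerated in polynomial time, and that consistency of the two sides across the interface already forces a global oriented matroid to exist. Proving such a gluing lemma --- or, equivalently, showing that the system of chirotope sign constraints arising here is of a tractable type, say one amenable to a sweep or allowable-sequence argument in the spirit of the pseudo-polygon analyses of O'Rourke and Streinu~\cite{ORS97} and related oriented-matroid approaches~\cite{AKMatroid02,Evans2015} --- is where the real work lies.
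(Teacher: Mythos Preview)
The statement you are addressing is a \emph{conjecture} in the paper, not a theorem: the authors explicitly leave the recognition problem open and offer no proof. There is therefore nothing in the paper to compare your proposal against.

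Your write-up is not a proof but a research programme, and you acknowledge as much: you identify the amalgamation step as ``the principal obstacle'' and say that proving the required gluing lemma ``is where the real work lies''. That is an honest assessment, but it means the proposal does not establish the conjecture. A few of the intermediate steps are also not secured: the linear bound $|V^{+}|=O(n)$ on the auxiliary vertices is asserted ``morally'' but not argued (the construction in Proposition~\ref{prop:pseudo-polygon} gives no such bound), and it is not clear that the decomposition machinery of Sections~\ref{sec:partition}--\ref{sec:capped}, which was designed for colouring rather than recognition, yields the kind of bounded-interface recursion you need. Until the gluing lemma is actually formulated and proved, the approach remains a plausible outline rather than a proof.
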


The part of Theorem~\ref{thm:H-free} concerning polynomial-time computation of clique number extends well-known results regarding polygon visibility graphs~\cite{clique85, clique00, algCliqueVis07}, although our algorithm is certainly slower.
We cannot expect to get an exact algorithm for the chromatic number, as Çağırıcı, Hliněný, and Roy~\cite{CHR19} proved that it is NP-complete to decide if a polygon visibility graph is $5$-colourable, even when the polygon is provided as part of the input.

\section{Curve pseudo-visibility graphs}
\label{sec:pseudo}

A \emph{pseudoline} is a simple curve which separates the plane into two unbounded regions.
A \emph{pseudoline arrangement} is a set of pseudolines such that each pair intersects in exactly one point, where they cross.
A \emph{pseudo-configuration} is a pair $(\cL,V)$ such that $\cL$ is a pseudoline arrangement and $V$ is a (finite) set of points on $\bigcup\cL$ with the property that any two points in $V$ lie on a common pseudoline in $\cL$ (which is therefore unique).
A pseudo-configuration $(\cL,V)$ is in \emph{general position} if no three points in $V$ lie on a common pseudoline in $\cL$.

Let $(\cL,V)$ be a pseudo-configuration and $K$ be a Jordan curve passing through all points in $V$.
The \emph{exterior} of $K$ is the unbounded component of $\R^2\setminus K$.
We say that two points $u,v\in V$ are \emph{mutually visible} in $K$ if the pseudoline segment in $\cL$ connecting $u$ and $v$ is disjoint from the exterior of $K$.
The \emph{curve pseudo-visibility graph} $G_\cL(K,V)$ has vertex set $V$ and has an edge $uv$ for each pair of vertices $u,v\in V$ that are mutually visible in $K$.
The curve $K$ is a \emph{pseudo-polygon} on $\cL$ with vertex set $V$ if every segment of $K$ between two consecutive points in $V$ is contained in a single pseudoline in $\cL$.
Graphs of the form $G_\cL(K,V)$ where $K$ is a pseudo-polygon on $\cL$ with vertex set $V$ and $(\cL,V)$ is in general position were considered by O'Rourke and Streinu~\cite{ORS97} as \emph{pseudo-polygon visibility graphs}.
As we will see, the general position assumption is not actually restrictive in this setting.

The following two propositions imply that curve pseudo-visibility graphs are exactly the induced subgraphs of pseudo-polygon visibility graphs.
First we find a pseudo-polygon, and then we take care of the general position assumption.

\begin{proposition}
\label{prop:pseudo-polygon}
For every curve pseudo-visibility graph\/ $G=G_\cL(K,V)$, there exist a pseudo-configuration\/ $(\cL',V')$ and a pseudo-polygon\/ $K'$ on\/ $\cL'$ with vertex set\/ $V'$ such that\/ $\cL\subseteq\cL'$, $V\subseteq V'$, the points in\/ $V$ occur in the same cyclic order on\/ $K'$ as on\/ $K$, and\/ $G$ is the subgraph of\/ $G_{\cL'}(K',V')$ induced on\/ $V$.
\end{proposition}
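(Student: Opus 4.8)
The plan is to start with the curve pseudo-visibility graph $G = G_\cL(K, V)$ and modify $K$ locally, one arc at a time, so that each arc between two consecutive points of $V$ comes to lie on a single pseudoline — without changing the visibility graph on $V$. Label the points of $V$ as $v_1, \dots, v_n$ in the cyclic order in which they occur along $K$. For each $i$, let $\gamma_i$ be the arc of $K$ from $v_i$ to $v_{i+1}$ (indices mod $n$). There is already a pseudoline $\ell_i \in \cL$ through $v_i$ and $v_{i+1}$, and it contains a pseudoline segment $s_i$ joining them. The natural idea is to replace $\gamma_i$ by a curve that ``hugs'' $s_i$: we want the new arc to be a segment of a pseudoline, and we want it to be either $s_i$ itself (if $s_i$ lies entirely in the closed interior of $K$, i.e.\ if $v_iv_{i+1}$ is an edge of $G$) or a slight outward perturbation of $s_i$ that stays outside $\overline{K}$ except at its endpoints (if $v_iv_{i+1}$ is a non-edge). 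In the first case no new pseudoline is needed; in the second case we add a new pseudoline $\ell_i'$ to the arrangement that closely follows $\ell_i$ on the relevant portion. The union $K'$ of these replacement arcs is the desired pseudo-polygon, $\cL'$ is $\cL$ together with the finitely many new pseudolines, and $V' = V$ (no new vertices yet — the general position fix is Proposition 2.3's job, handled separately).

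The key steps, in order: (1) Set up the local replacement for a single arc $\gamma_i$. One must check that $s_i$ (the pseudoline segment of $\ell_i$ between $v_i$ and $v_{i+1}$) can be chosen canonically — $\ell_i$ is divided by $v_i, v_{i+1}$ into two arcs, and one of them is the ``short'' one bounding, together with $\gamma_i$, a disk; use that one. (2) Argue that if $v_iv_{i+1} \in E(G)$ then $s_i$ is disjoint from the exterior of $K$, so we may simply take $s_i$ as the new arc; and if $v_iv_{i+1} \notin E(G)$ then $s_i$ meets the exterior, and we choose the new arc to be a pseudoline segment lying in the exterior (except at endpoints) and so close to $s_i$ that it does not separate any point of $V$ from $s_i$, nor cross any pseudoline segment of $\cL$ differently than $s_i$ does. (3) Verify that performing these replacements for all $i$ yields a Jordan curve $K'$ on which the points of $V$ appear in the same cyclic order, and that each new arc is genuinely contained in a single pseudoline of the enlarged arrangement $\cL' = \cL \cup \{\ell_i' : v_iv_{i+1} \notin E(G)\}$; here one must confirm that the $\ell_i'$ can be added while keeping the arrangement a pseudoline arrangement (each pair crosses exactly once) — this is possible because each $\ell_i'$ is a sufficiently small perturbation of $\ell_i$, and perturbations of pseudolines remain pseudoline arrangements. (4) Finally, show $G$ equals the induced subgraph of $G_{\cL'}(K', V)$ on $V$: for $u, v \in V$ with $u \neq v_i, v_{i+1}$ adjacency constraints, the pseudoline segment in $\cL'$ between $u$ and $v$ is the same as (or isotopic rel endpoints, staying on the same side of $K'$ versus $K$ as of) the one in $\cL$, because we only perturbed things near $K$ and outside it; so visibility is preserved.

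The main obstacle will be step (3)–(4): controlling the perturbations globally. Individually each replacement is an innocuous local move, but we must ensure the perturbations for different arcs do not interfere — e.g.\ two new pseudolines $\ell_i'$ and $\ell_j'$ must still cross exactly once, and a new arc near $\gamma_i$ must not accidentally block or unblock visibility between two points near $\gamma_j$. The clean way to handle this is to do the replacements sequentially and choose the $i$-th perturbation small enough relative to all the (finitely many) features created so far — all existing vertices, crossing points, and the arcs already placed — invoking compactness to extract a uniform ``how close is close enough'' threshold at each stage. One should also be slightly careful that the ``short arc'' of $\ell_i$ between $v_i$ and $v_{i+1}$ is well-defined and that the bounded region it cuts off with $\gamma_i$ contains no point of $V$ in its interior on the relevant side — but this follows because any $v_k$ strictly inside that region would, together with the pseudoline segments, force a configuration contradicting either planarity of the arrangement or the assumption that $\gamma_i$ is an arc between \emph{consecutive} points of $V$. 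Once the bookkeeping is set up, each verification is routine planar-topology reasoning; I expect the write-up to lean on a lemma stating that a pseudoline can be perturbed within an $\varepsilon$-neighbourhood of itself while preserving the arrangement structure and all incidences/sidedness with a prescribed finite set of points and curves.
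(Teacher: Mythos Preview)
Your plan has a genuine gap in step~(3): replacing the arc $\gamma_i$ by a curve close to the pseudoline segment $s_i$ need \emph{not} yield a Jordan curve, because $s_i$ can cross the remaining arcs $\gamma_j$ ($j\neq i$) of $K$. You guard against points of $V$ lying in the region between $\gamma_i$ and $s_i$, but that is not the obstruction; the obstruction is that \emph{entire subarcs} of $K\setminus\gamma_i$ may lie there.

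Here is a concrete instance with straight lines. Let $K$ be the boundary of the square $[0,10]^2$ with a rectangular notch $[4,6]\times[4,10]$ removed from the top, and take $V=\{v_1,v_2,v_3\}$ with $v_1=(4,10)$, $v_2=(6,10)$, $v_3=(10,5)$, appearing in the cyclic order $v_3,v_2,v_1$ along $K$. The arc $\gamma$ of $K$ from $v_1$ to $v_3$ (consecutive in $V$) is the long route around the left, bottom, and right sides of the square. The straight segment $s$ from $v_1$ to $v_3$ passes through the notch, so $v_1v_3\notin E(G)$; but $s$ also crosses the arc of $K$ from $v_2$ to $v_1$ (the notch boundary) transversally at roughly $(6,8.3)$. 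Any curve close to $s$ still crosses that arc, so replacing $\gamma$ by such a curve produces a self-intersecting $K'$. Moreover $s$ alternates interior--exterior--interior with respect to $K$, so there is no perturbation of $s$ lying entirely in the exterior of $K$ as your non-edge case demands.

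This is exactly why the paper's proof does \emph{not} keep $V'=V$. Instead it enlarges the arrangement (via Levi's extension lemma) to some $\cL^*\supset\cL$, takes $V'=K\cap\bigcup\cL^*$, and only then replaces each arc of $K$ between \emph{consecutive points of $V'$} by a pseudoline segment. The point of the enlargement is to make these arcs so fine that each is internally disjoint from $\bigcup\cL^*$; combined with a minimality argument on the number of crossings of $K$ with $\bigcup\cL^*$, this forces the region between each arc and its replacement segment to be free of the rest of $K'$ and of all of $\bigcup\cL$, so the replacements are simultaneously Jordan and visibility-preserving. Your sequential-perturbation scheme cannot rescue the argument without introducing these extra subdivision points: the failure above is not a matter of ``small enough $\varepsilon$'' but of the global topology of $K$ relative to $s_i$.
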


\begin{proof}
We can assume that $K$ intersects $\bigcup\cL$ only finitely many times.
To see this, consider the finite plane graph $H$ with a vertex for each intersection point of two pseudolines in $\cL$ (including the points in $V$) and with an edge for each pseudoline segment in $\cL$ connecting two vertices and passing through no other vertex.
Let $H'$ be the vertex-spanning subgraph of $H$ obtained by including only the edges whose pseudoline segment is disjoint from the exterior of $K$.
Thus $K$ is contained in the closure of the outer (unbounded) face of $H'$.
By following the boundary of this outer face very closely (and making thin connections between connected components of the boundary if $H'$ is disconnected), we can choose $K$ to intersect $\bigcup\cL$ only finitely many times while preserving the graph $G_\cL(K,V)$ and the order of points on $K$.

Let $(\cL^*,V^*)$ be a pseudo-configuration such that $\cL\subset\cL^*$, $V\subset V^*\subset K$, every open segment of $K$ connecting two points in $\bigcup\cL$ contains a point in $V^*\setminus\bigcup\cL$, each point in $V^*$ lies on at least two pseudolines in $\cL^*$, and $|V^*|\geq 3$; we first select $V^*$ and then extend $\cL$ to $\cL^*$ using Levi's extension lemma~\cite{Levi1926}.
As before, we can assume that $K$ intersects $\bigcup\cL^*$ only finitely many times.
We further assume that $K$ has the minimum number of intersection points with $\bigcup\cL^*$ among all Jordan curves $K^*$ that pass through all of the points in $V^*$ in the same order as $K$ and are such that $G=G_\cL(K^*,V)$.

Let $V'=K\cap\bigcup\cL^*$.
In particular, $V^*\subseteq V'$.
We extend $\cL^*$ to a family of pseudolines $\cL'$ such that $(\cL',V')$ is a pseudo-configuration, using Levi's extension lemma~\cite{Levi1926}.
For any two points $u,v\in V'$ consecutive on $K$, let $K_{uv}$ be the segment of $K$ between $u$ and $v$ (which is internally disjoint from $\bigcup\cL^*$), let $L'_{uv}$ be the pseudoline in $\cL'$ passing through $u$ and $v$, let $K'_{uv}$ be the segment $uv$ of $L'_{uv}$, and let $E_{uv}$ be the unbounded component of $\R^2\setminus(K_{uv}\cup K'_{uv})$.
To construct $K'$, we replace $K_{uv}$ by $K'_{uv}$ for every pair of points $u,v\in V'$ consecutive on $K$.
Since any pseudoline in $\cL^*$ intersecting $K'_{uv}$ needs to intersect $K_{uv}$, every pseudoline in $\cL^*\setminus\{L'_{uv}\}$ is fully contained in $E_{uv}\cup\{u,v\}$.
Consequently, since each point in $V^*$ lies on at least two pseudolines in $\cL^*$, we have $V^*\subset E_{uv}\cup\{u,v\}$.

We claim that $V'\subset E_{uv}\cup\{u,v\}$ as well.
If $L'_{uv}\notin\cL^*$, then indeed $V'\subset\bigcup\cL^*\subset E_{uv}\cup\{u,v\}$.
Now, suppose $L'_{uv}\in\cL^*$.
We have $u\notin\bigcup\cL$ or $v\notin\bigcup\cL$ by the choice of $V^*$, and thus $L'_{uv}\notin\cL$.
Suppose $K\setminus K_{uv}\not\subset E_{uv}$.
Since $\bigcup\cL\subseteq\bigcup(\cL^*\setminus\{L'_{uv}\})\subset E_{uv}\cup\{u,v\}$, $V^*\subset E_{uv}\cup\{u,v\}$, and $K\setminus K_{uv}$ is disjoint from $K_{uv}$, the parts of $K\setminus K_{uv}$ not lying in $E_{uv}$ can be moved into $E_{uv}$ decreasing the number of intersection points with $\bigcup\cL^*$ (as $|V^*|\geq 3$) while preserving the graph $G_\cL(K,V)$, which contradicts the choice of $K$.
Thus $V'\subset(K\setminus K_{uv})\cup\{u,v\}\subset E_{uv}\cup\{u,v\}$ when $L'_{uv}\in\cL^*$.

For any two pairs $u,v\in V'$ and $u',v'\in V'$ of consecutive points on $K$, if the internal parts of $K'_{uv}$ and $K'_{u'v'}$ intersect, then the four points $u,u',v,v'$ occur in this or the reverse order on the boundary of $(E_{uv}\cup\{u,v\})\cap(E_{u'v'}\cup\{u',v'\})$, so the internal parts of $K_{uv}$ and $K_{u'v'}$ intersect, which is impossible.
Thus $K'$ is a Jordan curve---a pseudo-polygon on $\cL'$ with vertex set $V'$.
Furthermore, $\bigcup\cL\subset\bigcap_{uv}(E_{uv}\cup\{u,v\})$, which implies $G_\cL(K',V)=G_\cL(K,V)$.
\end{proof}

\begin{proposition}
\label{prop:general-position}
For every curve pseudo-visibility graph\/ $G=G_\cL(K,V)$, there exist a pseudo-configuration\/ $(\cL',V')$ in general position and a pseudo-polygon\/ $K'$ on\/ $\cL'$ with vertex set\/ $V'$ such that\/ $V\subseteq V'$, the points in\/ $V$ occur in the same cyclic order on\/ $K'$ as on\/ $K$, and\/ $G$ is the subgraph of\/ $G_{\cL'}(K',V')$ induced on\/ $V$.
\end{proposition}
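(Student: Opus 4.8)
The plan is to invoke Proposition~\ref{prop:pseudo-polygon} and then destroy the degeneracies by a local perturbation of the arrangement. By Proposition~\ref{prop:pseudo-polygon} it suffices to treat the case in which $K$ is already a pseudo-polygon on $\cL$ with vertex set $V$; the general-position arrangement produced below will contain $V$, so the statement about the induced subgraph on $V$ follows. Call a pseudoline $L\in\cL$ \emph{bad} if at least three points of $V$ lie on it, and eliminate the bad pseudolines one by one. Fix a bad $L$ and let $p_1\prec_L\cdots\prec_L p_k$ be the points of $V$ on $L$ in the order along $L$; note first that any edge of $K$ contained in $L$ joins two consecutive $p_m,p_{m+1}$, since otherwise some point of $V$ would lie in the interior of that edge. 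Delete $L$ from $\cL$ and, by iterated use of Levi's extension lemma~\cite{Levi1926}, add a pseudoline $L_{ij}$ for each pair $1\leq i<j\leq k$, chosen so that $L_{ij}$ passes through $p_i$ and $p_j$, misses every other point of $V$, and has its $p_ip_j$-arc an arbitrarily small perturbation of the $p_ip_j$-arc of $L$ that detours each intermediate $p_m$ ($i<m<j$) to the side of $L$ facing the interior of $K$ near $p_m$ (a well-defined side whenever the arc runs along the boundary of $K$ there, and otherwise an irrelevant choice). At the same time, replace each edge $p_mp_{m+1}$ of $K$ contained in $L$ by the $p_mp_{m+1}$-arc of $L_{m,m+1}$, which deforms $K$ by an arbitrarily small amount. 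After all bad pseudolines have been processed, let $\cL'$ be the resulting arrangement, $K'$ the resulting Jordan curve, and $V'=V$.

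The routine part is to check that $(\cL',V')$ is a pseudo-configuration in general position: any two points of $V$ still share a pseudoline (the original one if it was good, otherwise some $L_{ij}$), no pseudoline of $\cL'$ carries three points of $V$ (surviving pseudolines are good and each $L_{ij}$ meets $V$ only in $\{p_i,p_j\}$), each edge of $K'$ now lies on a single pseudoline so $K'$ is a pseudo-polygon on $\cL'$ with vertex set $V'$, and $K'$ keeps the points of $V$ in the same cyclic order since it is a small perturbation of $K$. What remains is to show that $G=G_\cL(K,V)$ equals the subgraph of $G_{\cL'}(K',V')$ induced on $V$, that is, for every pair $u,v\in V$ the $uv$-arc of the pseudoline of $\cL'$ through $u,v$ avoids the exterior of $K'$ precisely when the $uv$-arc of the pseudoline of $\cL$ through $u,v$ avoids the exterior of $K$. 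One direction is cheap: if the old arc meets the open exterior of $K$, it does so at some point $z\notin V$, and once all the perturbations above are taken smaller than the finitely many relevant distances from such witnesses to $K$, the corresponding new arc meets the open exterior of $K'$ near $z$.

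For the converse, suppose the old $uv$-arc lies in the closure of the interior of $K$. If the pseudoline through $u,v$ is not bad, then its $uv$-arc contains no other point of $V$ and no edge of $K$ in its interior (either would force a third point of $V$ onto it), so its open part lies in the open interior of $K$ and it leaves $u$ and $v$ either into that open interior or is itself an edge of $K$; such an arc is unchanged and stays in the closure of the interior of $K'$. If the pseudoline is a bad $L$ with $u=p_i$, $v=p_j$, I would cut the old arc at the intermediate points $p_{i+1},\dots,p_{j-1}$: each piece between consecutive $p_m,p_{m+1}$ is either an edge of $K$, replaced in $K'$ by the nearby arc of $L_{m,m+1}$ with the new $L_{ij}$-arc lying just to its interior side, or a piece whose open part is in the open interior of $K$; and at each point $p_m$ (including the endpoints $p_i,p_j$) the line $L$ lies locally in the closure of the interior of $K$, so detouring towards that interior keeps the new $L_{ij}$-arc inside. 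Assembling these local facts, with all perturbations sufficiently small, gives that the new $uv$-arc lies in the closure of the interior of $K'$, which yields the conclusion. The main obstacle is precisely this last verification: one must coordinate the sides and magnitudes of all the perturbations so that the pseudolines $L_{ij}$ form an arrangement together with the rest of $\cL$, so that near every replaced edge the detouring arcs sit on the correct side, and so that the local boundary analysis goes through at every point of $V$ that any arc passes through — elementary, but requiring care.
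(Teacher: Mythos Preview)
Your plan coincides with the paper's: invoke Proposition~\ref{prop:pseudo-polygon} to reduce to a pseudo-polygon, then for each pseudoline $L\in\cL$ carrying three or more points of $V$, replace $L$ by a bundle $\{L_{ij}\}$, one pseudoline per pair of points on $L$. The gap is in how the bundle is built. You appeal to iterated Levi extension, ``chosen so that'' each $L_{ij}$ has its $p_ip_j$-arc a small perturbation of $L$'s that detours every intermediate $p_m$ to a prescribed side. But Levi's lemma only furnishes \emph{some} pseudoline through two given points extending the arrangement; it grants no control over the shape of that pseudoline or the side on which it passes intermediate points. You flag this yourself as ``the main obstacle'' and defer it as ``elementary, but requiring care'' --- yet this is exactly the content of the proof, and you have not supplied it.

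The paper does. After straightening $L$ to a vertical line with $V\cap L=\{v_0,\dots,v_k\}$ listed bottom-to-top, it places auxiliary points $v'_i$ on the circle with diameter $v_0v_k$: the endpoints stay put, and each intermediate $v'_i$ is the left or right intersection of that circle with the horizontal through $v_i$, according to whether the exterior of $K$ meets $v_i$ from the left or right of $L$ (either choice if both). Because the $v'_i$ lie in convex position, the \emph{straight} lines through the pairs $v'_i,v'_j$ automatically form a line arrangement, and the left/right placement encodes the correct detour side at every intermediate point. Flattening this picture horizontally into a thin strip about $L$, then locally shifting so that each $L_{ij}$ passes exactly through $v_i,v_j$ and so that the segment between consecutive $v_i,v_{i+1}$ coincides with that of $L$ (hence $K$ itself need not be perturbed at all), yields the bundle $\cB_L$ satisfying the arrangement property and all four visibility-preservation conditions at once. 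This convex-position device is what your sketch is missing.
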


\begin{proof}
By Proposition~\ref{prop:pseudo-polygon}, we can assume without loss of generality that $K$ is a pseudo-polygon on $\cL$.
Suppose there is a pseudoline $L$ in $\cL$ passing through more than two points in $V$.
We show that $L$ can be replaced in $\cL$ by a bunch $\cB_L$ of pseudolines in a small neighbourhood of $L$ so that the set $(\cL\setminus\{L\})\cup\cB_L$ is a pseudoline arrangement and the following conditions hold for any two distinct points $u,v\in V\cap L$.
\begin{enumerate}
\item There is a pseudoline $L_{uv}\in\cB_L$ passing through $u$, $v$, and no other points in $V$.
\item If $u$ and $v$ are consecutive points of $V\cap L$ on $L$, then the segment $uv$ of $L_{uv}$ coincides with the segment $uv$ of $L$.
\item If the segment $uv$ of $L$ is disjoint from the exterior of $K$, then so is the segment $uv$ of $L_{uv}$.
\item If the segment $uv$ of $L$ intersects the exterior of $K$, then so does the segment $uv$ of $L_{uv}$.
\end{enumerate}
Condition (4) is automatically satisfied whenever we make $\cB_L$ lie in a sufficiently small neighbourhood of $L$.
Applying this replacement repeatedly for every such pseudoline $L$ yields a claimed pseudoline arrangement $\cL'$.

For the replacement step, assume without loss of generality that $L$ is a vertical line (by applying an appropriate homeomorphism of the plane before and the inverse homeomorphism after the step).
Enumerate the points in $V\cap L$ as $v_0,\ldots,v_k$ from bottom to top.
Let $C$ be the circle with vertical diameter $v_0v_k$.
Let $v'_0=v_0$ and $v'_k=v_k$.
For $0<i<k$, let $H_i$ be the horizontal line through $v_i$, and let $v'_i$ be the left/the right/any intersection point of $C$ and $H_i$ if the exterior of $K$ touches $v_i$ from the left side/the right side/both sides of the vertical line $L$ (respectively).
For $0\leq i<j\leq k$, let $L'_{i,j}$ be the straight line passing through $v'_i$ and $v'_j$.
The bundle $\cB_L$ is obtained by ``flattening'' the family of lines $\{L_{i,j}\}_{0\leq i<j\leq k}$ horizontally to fit it in a small neighbourhood of $L$ and performing local horizontal shifts to guarantee conditions (1) and (2); condition (3) then follows.
See Figure~\ref{fig:bundle} for an illustration.
\end{proof}

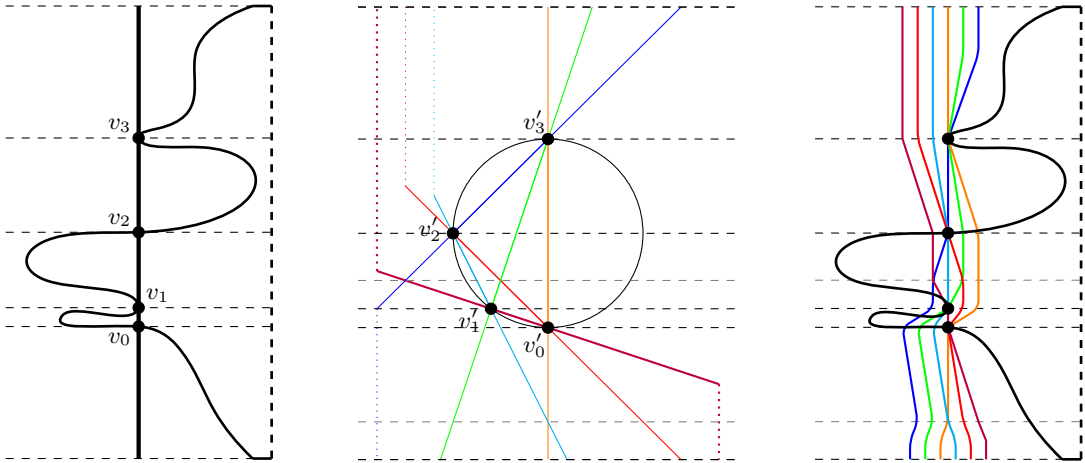
\begin{figure}[t]
\centering
\begin{tikzpicture}[xscale=0.5, yscale=0.25, every node/.style={inner sep=1.5, outer sep=0}, vtx/.style={draw, circle, fill=black}]
  \draw[thin, dashed] (-3.5,-12) -- (3.5,-12);
  \draw[thin, dashed] (-3.5,-5) -- (3.5,-5);
  \draw[thin, dashed] (-3.5,-4) -- (3.5,-4);
  \draw[thin, dashed] (-3.5,0) -- (3.5,0);
  \draw[thin, dashed] (-3.5,5) -- (3.5,5);
  \draw[thin, dashed] (-3.5,12) -- (3.5,12);
  \coordinate (v1) at (0,-5) {};
  \coordinate (v2) at (0,-4) {};
  \coordinate (v3) at (0,0) {};
  \coordinate (v4) at (0,5) {};
  \coordinate (v0v1_1) at (3,-12) {};
  \coordinate (v0v1_2) at (2,-9.5) {};
  \coordinate (v0v1_3) at (1,-6) {};
  \coordinate (v1) at (0,-5) {};
  \coordinate (v1v2_1) at (-2,-4.4) {};
  \coordinate (v2) at (0,-4) {};
  \coordinate (v2v3_1) at (0,-3.9) {};
  \coordinate (v2v3_2) at (-2.5,-2.5) {};
  \coordinate (v2v3_3) at (-2.5,-0.5) {};
  \coordinate (v3) at (0,0) {};
  \coordinate (v3v4_1) at (1.7,0.5) {};
  \coordinate (v3v4_2) at (1.7,4.5) {};
  \coordinate (v4) at (0,5) {};
  \coordinate (v4v5_1) at (0,5.1) {};
  \coordinate (v4v5_2) at (0.5,5.5) {};
  \coordinate (v4v5_3) at (1.5,9.5) {};
  \coordinate (v4v5_4) at (3,12) {};
  \draw[ultra thick] (0,-12) -- (0,12);
  \draw[line width=1pt, use Hobby shortcut] (v0v1_1)..(v0v1_2)..(v0v1_3)..(v1)..(v1v2_1)..(v2)..(v2v3_1)..(v2v3_2)..(v2v3_3)..(v3)..(v3v4_1)..(v3v4_2)..(v4)..(v4v5_1)..(v4v5_2)..(v4v5_3)..(v4v5_4); 
  \draw[line width=1pt] (3,-12) -- (3.5,-12);
  \draw[line width=1pt] (3,12) -- (3.5,12);
  \draw[line width=1pt, dashed] (3.5,-12) -- (3.5,12);
  \node[vtx] at (v1) {};
  \node[vtx] at (v2) {};
  \node[vtx] at (v3) {};
  \node[vtx] at (v4) {};
  \begin{footnotesize}
  \node at ($(v1)+(-0.5,-0.1)$) [below] {$v_0$};
  \node at ($(v2)+(0.5,0.1)$) [above] {$v_1$};
  \node at ($(v3)+(-0.5,0.1)$) [above] {$v_2$};
  \node at ($(v4)+(-0.5,0.1)$) [above] {$v_3$};
  \end{footnotesize}
\end{tikzpicture}\hskip 1cm
\begin{tikzpicture}[scale=0.25, every node/.style={inner sep=1.5, outer sep=0}, vtx/.style={draw, circle, fill=black}]
  \draw (0,0) circle (5cm);
  \coordinate (v1) at (0,-5) {};
  \coordinate (v2) at (-3,-4) {};
  \coordinate (v3) at (-5,0) {};
  \coordinate (v4) at (0,5) {};
  \draw[dashed] (-10,-12) -- (10,-12);
  \draw[dashed] (-10,12) -- (10,12);
  \draw[purple] (v1) -- (v2);
  \draw[thick, purple] (-9,-2) -- (9,-8);
  \draw[thick, purple, dotted] (-9,-2) -- (-9,12);
  \draw[thick, purple, dotted] (9,-8) -- (9,-12);
  \draw[red] (v1) -- (v3);
  \draw[red] (7,-12) -- (-7.5,2.5);
  \draw[red, dotted] (-7.5,2.5) -- (-7.5,12);
  \draw[orange] (v1) -- (v4);
  \draw[orange] (0,-12) -- (0,12);
  \draw[cyan] (v2) -- (v3);
  \draw[cyan] (1,-12) -- (-6,2);
  \draw[cyan, dotted] (-6,2) -- (-6,12);
  \draw[green] (v2) -- (v4);
  \draw[green] (-5.66,-12) -- (2.33,12);
  \draw[blue] (v3) -- (v4);
  \draw[blue] (-9,-4) -- (7,12);
  \draw[blue, dotted] (-9,-4) -- (-9,-12);
  \draw[thin, dashed] (-10,-12) -- (10,-12);
  \draw[thin, dashed, gray] (-10,-10) -- (10,-10);
  \draw[thin, dashed] (-10,-5) -- (10,-5);
  \draw[thin, dashed] (-10,-4) -- (10,-4);
  \draw[thin, dashed, gray] (-10,-2.5) -- (10,-2.5);
  \draw[thin, dashed] (-10,0) -- (10,0);
  \draw[thin, dashed] (-10,5) -- (10,5);
  \draw[thin, dashed] (-10,12) -- (10,12);
  \node[vtx] at (v1) {};
  \node[vtx] at (v2) {};
  \node[vtx] at (v3) {};
  \node[vtx] at (v4) {};
  \begin{footnotesize}
  \node at ($(v1)+(-0.7,-0.1)$) [below] {$v'_0$};
  \node at ($(v2)+(-1.1,0.2)$) [below] {$v'_1$};
  \node at ($(v3)+(-1.2,-0.6)$) [above] {$v'_2$};
  \node at ($(v4)+(-0.7,0.1)$) [above] {$v'_3$};
  \end{footnotesize}
\end{tikzpicture}\hskip 1cm
\begin{tikzpicture}[xscale=0.5, yscale=0.25, every node/.style={inner sep=1.5, outer sep=0}, vtx/.style={draw, circle, fill=black}]
  \draw[thin, dashed] (-3.5,-12) -- (3.5,-12);
  \draw[thin, dashed, gray] (-3.5,-10) -- (3.5,-10);
  \draw[thin, dashed] (-3.5,-5) -- (3.5,-5);
  \draw[thin, dashed] (-3.5,-4) -- (3.5,-4);
  \draw[thin, dashed, gray] (-3.5,-2.5) -- (3.5,-2.5);
  \draw[thin, dashed] (-3.5,0) -- (3.5,0);
  \draw[thin, dashed] (-3.5,5) -- (3.5,5);
  \draw[thin, dashed] (-3.5,12) -- (3.5,12);
  \coordinate (l0M) at (0,-12) {};
  \coordinate (l0x-3) at (-1,-12) {};
  \coordinate (l0x-2) at (-0.6,-12) {};
  \coordinate (l0x-1) at (-0.2,-12) {};
  \coordinate (l0x1) at (0.2,-12) {};
  \coordinate (l0x2) at (0.6,-12) {};
  \coordinate (l0x3) at (1,-12) {};
  \coordinate (l'0M) at (0,-11) {};
  \coordinate (l'0x-3) at (-1,-11) {};
  \coordinate (l'0x-2) at (-0.6,-11) {};
  \coordinate (l'0x-1) at (-0.2,-11) {};
  \coordinate (l'0x1) at (0.2,-11) {};
  \coordinate (l'0x2) at (0.6,-11) {};
  \coordinate (l'0x3) at (1,-11) {};
  \coordinate (l1M) at (0,-10) {};
  \coordinate (l1x-2) at (-0.8,-10) {};
  \coordinate (l1x-1) at (-0.4,-10) {};
  \coordinate (l1x0) at (0,-10) {};
  \coordinate (l1x1) at (0.4,-10) {};
  \coordinate (l1x2) at (0.8,-10) {};
  \coordinate (l2M) at (0,-5) {};
  \coordinate (l2x-3) at (-1.2,-5) {};
  \coordinate (l2x-2) at (-0.8,-5) {};
  \coordinate (l2x-1) at (-0.4,-5) {};
  \coordinate (l2x0) at (0,-5) {};
  \coordinate (l2x1) at (0.4,-5) {};
  \coordinate (l3M) at (0,-4) {};
  \coordinate (l3x-3) at (-1.2,-4) {};
  \coordinate (l3x-2) at (-0.8,-4) {};
  \coordinate (l3x-1) at (-0.4,-4) {};
  \coordinate (l3x0) at (0,-4) {};
  \coordinate (l3x1) at (0.4,-4) {};
  \coordinate (l3x2) at (0.8,-4) {};
  \coordinate (l4M) at (0,-2.5) {};
  \coordinate (l4x0) at (0,-2.5) {};
  \coordinate (l4x-1) at (-0.4,-2.5) {};
  \coordinate (l4x1) at (0.4,-2.5) {};
  \coordinate (l4x2) at (0.8,-2.5) {};
  \coordinate (l5M) at (0,0) {};
  \coordinate (l5x0) at (0,0) {};
  \coordinate (l5x-1) at (-0.4,0) {};
  \coordinate (l5x1) at (0.4,0) {};
  \coordinate (l5x2) at (0.8,0) {};
  \coordinate (l6M) at (0,5) {};
  \coordinate (l6x-3) at (-1.2,5) {};
  \coordinate (l6x-2) at (-0.8,5) {};
  \coordinate (l6x-1) at (-0.4,5) {};
  \coordinate (l6x0) at (0,5) {};
  \coordinate (l6x1) at (0.4,5) {};
  \coordinate (l6x2) at (0.8,5) {};
  \coordinate (l7M) at (0,12) {};
  \coordinate (l7x-3) at (-1.2,12) {};
  \coordinate (l7x-2) at (-0.8,12) {};
  \coordinate (l7x-1) at (-0.4,12) {};
  \coordinate (l7x1) at (0,12) {};
  \coordinate (l7x2) at (0.4,12) {};
  \coordinate (l7x3) at (0.8,12) {};
  \coordinate (l'7M) at (0,9.5) {};
  \coordinate (l'7x-3) at (-1.2,9.5) {};
  \coordinate (l'7x-2) at (-0.8,9.5) {};
  \coordinate (l'7x-1) at (-0.4,9.5) {};
  \coordinate (l'7x1) at (0,9.5) {};
  \coordinate (l'7x2) at (0.4,9.5) {};
  \coordinate (l'7x3) at (0.8,9.5) {};
  \draw[thick, blue, rounded corners=1mm] (l0x-3)--(l'0x-3)--(l1x-2)--(l2x-3)--(l3x-1)--(l4x-1)--(l5x0)--(l6x0)--(l'7x3)--(l7x3);
  \draw[thick, green, rounded corners=1mm] (l0x-2)--(l'0x-2)--(l1x-1)--(l2x-2)--(l3x0)--(l4x1)--(l5x1)--(l6x0)--(l'7x2)--(l7x2);
  \draw[thick, orange, rounded corners=1mm] (l0x-1)--(l'0x-1)--(l1x0)--(l2x0)--(l3x2)--(l4x2)--(l5x2)--(l6x0)--(l'7x1)--(l7x1);
  \draw[thick, cyan, rounded corners=1mm] (l0x1)--(l'0x1)--(l1x0)--(l2x-1)--(l3x0)--(l4x0)--(l5x0)--(l6x-1)--(l'7x-1)--(l7x-1);
  \draw[thick, red, rounded corners=1mm] (l0x2)--(l'0x2)--(l1x1)--(l2x0)--(l3x1)--(l4x1)--(l5x0)--(l6x-2)--(l'7x-2)--(l7x-2);
  \draw[thick, purple, use Hobby shortcut] (l0x3)--(l'0x3)--(l1x2)--(l2x0)--(l3x0)--(l4x-1)--(l5x-1)--(l6x-3)--(l'7x-3)--(l7x-3);
  \coordinate (v1) at (0,-5) {};
  \coordinate (v2) at (0,-4) {};
  \coordinate (v3) at (0,0) {};
  \coordinate (v4) at (0,5) {};
  \coordinate (v0v1_1) at (3,-12) {};
  \coordinate (v0v1_2) at (2,-9.5) {};
  \coordinate (v0v1_3) at (1,-6) {};
  \coordinate (v1) at (0,-5) {};
  \coordinate (v1v2_1) at (-2,-4.4) {};
  \coordinate (v2) at (0,-4) {};
  \coordinate (v2v3_1) at (0,-3.9) {};
  \coordinate (v2v3_2) at (-2.5,-2.5) {};
  \coordinate (v2v3_3) at (-2.5,-0.5) {};
  \coordinate (v3) at (0,0) {};
  \coordinate (v3v4_1) at (1.7,0.5) {};
  \coordinate (v3v4_2) at (1.7,4.5) {};
  \coordinate (v4) at (0,5) {};
  \coordinate (v4v5_1) at (0,5.1) {};
  \coordinate (v4v5_2) at (0.5,5.5) {};
  \coordinate (v4v5_3) at (1.5,9.5) {};
  \coordinate (v4v5_4) at (3,12) {};
  \draw[line width=1pt, use Hobby shortcut] (v0v1_1)..(v0v1_2)..(v0v1_3)..(v1)..(v1v2_1)..(v2)..(v2v3_1)..(v2v3_2)..(v2v3_3)..(v3)..(v3v4_1)..(v3v4_2)..(v4)..(v4v5_1)..(v4v5_2)..(v4v5_3)..(v4v5_4); 
  \node[vtx] at (v1) {};
  \node[vtx] at (v2) {};
  \node[vtx] at (v3) {};
  \node[vtx] at (v4) {};
  \draw[line width=1pt] (3,-12) -- (3.5,-12);
  \draw[line width=1pt] (3,12) -- (3.5,12);
  \draw[line width=1pt, dashed] (3.5,-12) -- (3.5,12);
\end{tikzpicture}
\caption{Replacing $L$ with a bundle of pseudo-lines $\cB_L$ in the proof of Proposition \ref{prop:general-position}.}
\label{fig:bundle}
\end{figure}

Recall that an \emph{ordered graph} is a tuple $(G,{\prec})$ such that $G$ is a graph and $\prec$ is a linear order on its vertex set.
While it is more convenient to work with linear orders, the points on a Jordan curve are really ordered cyclically.
A \emph{rotation} of a linear order $\prec$ is any linear order obtained from $\prec$ by repeatedly making the largest element the smallest.
We think of any finite set of points $V$ on a Jordan curve $K$ as being ordered counterclockwise around $K$, as in Figure~\ref{fig:graph_classes} (bottom-left).
We call any linear order which begins at an arbitrary point in $V$ and then follows $K$ in the counterclockwise direction a \emph{natural order} of $V$ on $K$.
A curve pseudo-visibility graph $G_\cL(K,V)$ along with a natural order of $V$ on $K$ forms an \emph{ordered curve pseudo-visibility graph}.

If $(G,{\prec})$ is an ordered graph with vertices $a\prec b\prec c\prec d$ and edges $ac$ and $bd$, we say that $ac$ \emph{crosses} $bd$, that $ac$ and $bd$ are \emph{crossing edges}, and that $bd$ is \emph{crossed} by $ac$.
Two edges which are not crossing are called \emph{non-crossing}.
The property that a pair of edges is crossing/non-crossing is preserved under rotation (since, using this terminology, we do not specify which edge crosses the other).
In particular, it is well defined for an ordered curve pseudo-visibility graph regardless of the choice of a natural ordering.

\begin{lemma}
\label{lem:rep}
For a curve pseudo-visibility graph\/ $G_\cL(K,V)$ with\/ $(\cL,V)$ in general position, two distinct edges\/ $uv$ and\/ $xy$ are crossing if and only if the open segments\/ $uv$ and\/ $xy$ of pseudolines in\/ $\cL$ intersect.
\end{lemma}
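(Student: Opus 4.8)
The plan is to reduce both sides of the asserted equivalence to one condition on the closed region bounded by $K$: the \emph{closed} pseudoline segments $uv$ and $xy$ meet if and only if $u,v$ separate $x,y$ along $K$, where ``$u,v$ separate $x,y$'' means that $x$ and $y$ lie in different arcs of $K\setminus\{u,v\}$---which for four distinct points on a Jordan curve is exactly what it means for the edges $uv$ and $xy$ to be crossing.

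First I would handle the case that $uv$ and $xy$ share a vertex: they are then non-crossing by definition, and since $(\cL,V)$ is in general position the two pseudolines carrying these pairs are distinct, so they meet only at the common vertex and the open segments are disjoint; the equivalence holds vacuously. So assume $u,v,x,y$ are distinct and let $p$ be the unique point where $L_{uv}$ and $L_{xy}$ cross. General position forces $p\notin\{u,v,x,y\}$ (e.g.\ $p=u$ would put the three vertices $u,x,y$ on $L_{xy}$). Since $uv\cap xy\subseteq L_{uv}\cap L_{xy}=\{p\}$, and $p$ is an interior point of a segment whenever it lies on it, the open segments $uv$ and $xy$ intersect iff $p$ lies on both closed segments, iff the closed segments $uv$ and $xy$ meet at all. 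Thus it remains to prove: the closed segments $uv$ and $xy$ meet $\iff$ $u,v$ separate $x,y$ on $K$.

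The next step, which I expect to be the only delicate one, is to normalize $K$. By the argument in the first paragraph of the proof of Proposition~\ref{prop:pseudo-polygon} I may assume that $K$ meets $\bigcup\cL$ in finitely many points; then I would perturb $K$ with $V$ held pointwise fixed, only enlarging the closed region $D$ bounded by $K$ (the complement of the exterior of $K$), so that afterwards $K$ crosses every pseudoline of $\cL$ transversally---rather than touching it---at each of their common points outside $V$; this is possible since after the first reduction there are only finitely many such points and a generic small outward perturbation near each one does the job. Enlarging $D$ keeps every mutually visible pair visible, so $uv$ and $xy$ (and possibly further pairs) remain edges, while the cyclic order of $V$ on $K$---and hence the crossing relation among edges, and hence the statement to be proved for the given pair---is unchanged; so this reduction is legitimate. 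Its point is that now every segment of a mutually visible pair $a,b\in V$ is a \emph{crosscut} of $D$, i.e.\ it lies in $D$ and meets $K$ only at $a$ and $b$: an interior point of it lying on $K$ could not be a vertex (that would put three vertices on $L_{ab}$), hence would be a point of $K\setminus V$ on $L_{ab}$ and thus a transversal crossing of $K$ with $L_{ab}$---impossible for a point interior to a subarc of $L_{ab}$ contained in $D$.

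Finally I would run a standard Jordan-curve argument with $\sigma$ and $\tau$ denoting the (now crosscut) closed segments $xy$ and $uv$. The crosscut $\sigma$ cuts $D$ into two closed discs glued along $\sigma$, so $D\setminus\sigma$ has exactly two components, with the two arcs of $K\setminus\{x,y\}$ lying one in each; since $u,v\in K\setminus\{x,y\}$ avoid $\sigma$, the pair $u,v$ separates $x,y$ on $K$ exactly when $u$ and $v$ lie in different components of $D\setminus\sigma$. If they do, then $\tau$, being connected and contained in $D$, must meet $\sigma$. Conversely, if $\tau$ meets $\sigma$ then $\tau\cap\sigma=\{p\}$ with $p$ interior to both; since $L_{uv}$ crosses $L_{xy}$ at $p$, the two subarcs into which $p$ splits $\tau$ lie near $p$ on opposite sides of $\sigma$, hence---being connected and, away from $p$, disjoint from $\sigma$---in different components of $D\setminus\sigma$, so their other endpoints $u$ and $v$ lie in different components and $u,v$ separate $x,y$ on $K$. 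Combined with the first two paragraphs, this gives the lemma.
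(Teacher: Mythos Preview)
Your argument is correct, but it takes a genuinely different route from the paper's. The paper dispatches both directions in two lines by appealing to the non-outerplanarity of $K_4$: the curve $K$ supplies a $4$-cycle on $\{u,v,x,y\}$ with all four vertices on the outer face, and the two pseudoline segments supply the remaining two edges of $K_4$; if the edges $uv$, $xy$ are crossing in the order but the open segments miss each other, or if they are non-crossing in the order but the segments do meet (in which case one re-routes them near their unique intersection point), one obtains an outerplanar drawing of $K_4$, a contradiction.

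By contrast, you make the underlying topology explicit: you normalize $K$ (outward perturbation to kill tangencies) so that each visibility segment becomes an honest crosscut of the disc $D$, and then argue directly that a crosscut $\sigma=xy$ separates $D$ into two pieces carrying the two arcs of $K\setminus\{x,y\}$, whence $\tau=uv$ meets $\sigma$ iff $u,v$ lie in different arcs. Your approach is longer but more self-contained; in particular it does not rely on the segments being internally disjoint from $K$ a~priori, which the paper's $K_4$ sketch tacitly assumes (a segment could in principle touch $K$ at interior points before your perturbation). The paper's proof is slicker once one is happy to invoke outerplanarity of $K_4$ and gloss over that point.
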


\begin{proof}
If $uv$ and $xy$ are crossing edges, then the open segments $uv$ and $xy$ must intersect; otherwise $K$ along with $uv$ and $xy$ give an outerplanar drawing of $K_4$, which is impossible.
If $uv$ and $xy$ are non-crossing while the open segments $uv$ and $xy$ intersect, then we can again obtain an outerplanar drawing of $K_4$ by re-connecting $uv$ and $xy$ in a sufficiently small neighbourhood of their unique intersection point---a contradiction.
\end{proof}

\section{Obstructions for curve pseudo-visibility graphs}
\label{sec:obstructions}

In this section, we discuss the obstructions mentioned in the introduction: the class $\cH$ and the class of ordered holes.
Ghosh~\cite{Ghosh97} observed that these are obstructions for polygon visibility graphs, and related obstructions in the pseudo-visibility setting appear in the works of Abello and Kumar~\cite{AKMatroid02} and O'Rourke and Streinu~\cite{ORS97}.

If two vertices $u$ and $v$ are non-adjacent in a curve pseudo-visibility graph $G_\cL(K,V)$, then it is because at least one of two parts of $K$ between $u$ and $v$ ``blocks'' visibility between them.
The intuition behind the next definition is that if there is a ``crossing sequence'' from $u$ to $v$, then the part of $K$ from $u$ to $v$ (counterclockwise) cannot ``block'' visibility between $u$ and $v$.

\begin{figure}[t]
\centering
\begin{tikzpicture}[scale=.8, every node/.style={inner sep=2, outer sep=0, draw, circle, fill=gray}, >=latex]
  \draw[ultra thin] (0,0) circle (2) {};
  \node[label=left:$u$] (u) at (180:2) {};
  \node[label=right:$v$] (v) at (0:2) {};
  \foreach\i in {1,...,4}{
    \node[inner sep=1.5] (A\i) at (170+\i*36:2) {};
    \node[inner sep=1.5] (B\i) at (190+\i*36:2) {};
  }
  \draw[very thick] (u) -- (B1);
  \draw[very thick] (A1) -- (B2);
  \draw[very thick] (A2) -- (B3);
  \draw[very thick] (A3) -- (B4);
  \draw[very thick] (A4) -- (v);
  \draw[->] (190:2.4) arc (190:230:2.4);
\end{tikzpicture}\hskip 2cm
\begin{tikzpicture}[scale=.8, every node/.style={inner sep=2, outer sep=0, draw, circle, fill=gray}, >=latex]
  \draw[ultra thin] (0,0) circle (2) {};
  \node[label=left:$u$] (u) at (180:2) {};
  \node[label=right:$v$] (v) at (0:2) {};
  \node[inner sep=1.5] (A1) at (45:2) {};
  \node[inner sep=1.5] (A2) at (135:2) {};
  \node[inner sep=1.5] (B) at (90:2) {};
  \draw[very thick] (v) -- (B);
  \draw[very thick] (A1) -- (A2);
  \draw[very thick] (B) -- (u);
  \draw[->] (10:2.4) arc (10:50:2.4);
\end{tikzpicture}
\caption{A crossing sequence from $u$ to $v$ (left) and from $v$ to $u$ (right).}
\label{fig:crossing_sequence}
\end{figure}
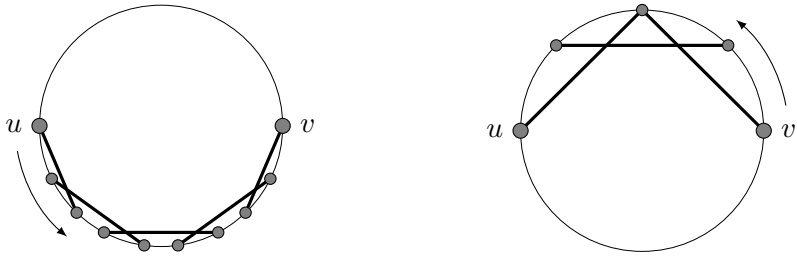

Let $u$ and $v$ be two distinct vertices in an ordered graph $(G,{\prec})$.
If $u\prec v$, then a \emph{crossing sequence} from $u$ to $v$ is a sequence of distinct edges $e_1,\ldots,e_k$ such that $u$ is the smaller end of $e_1$, $v$ is the larger end of $e_k$, and $e_i$ crosses $e_{i+1}$ for $1\leq i<k$.
Observe that the notion of a crossing sequence is invariant under rotation of $\prec$ as long as $u\prec v$.
If $v\prec u$, then a \emph{crossing sequence} from $u$ to $v$ is a crossing sequence from $u$ to $v$ in any rotation $\prec'$ of $\prec$ such that $u\prec'v$.
These definitions should be thought of cyclically; whichever vertex is smaller, a crossing sequence from $u$ to $v$ begins at $u$ and goes counterclockwise until it hits $v$ (see Figure~\ref{fig:crossing_sequence}).
If $u$ and $v$ are adjacent, then the edge $uv$ is a crossing sequence from $u$ to $v$ and from $v$ to $u$.

\begin{lemma}
\label{lem:almostTran}
If\/ $(G,{\prec})$ is an ordered graph with vertices\/ $a\prec b\prec c\prec d$ and there are crossing sequences from\/ $a$ to\/ $c$ and from\/ $b$ to\/ $d$, then there is a crossing sequence from\/ $a$ to\/ $d$.
\end{lemma}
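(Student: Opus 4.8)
The plan is to take a crossing sequence $e_1,\ldots,e_k$ from $a$ to $c$ and a crossing sequence $f_1,\ldots,f_\ell$ from $b$ to $d$ and splice them together at a crossing. Think of everything cyclically: the first sequence sweeps counterclockwise from $a$ to $c$, and the second from $b$ to $d$, where $a\prec b\prec c\prec d$. Informally, the arc $(a,c)$ counterclockwise and the arc $(b,d)$ counterclockwise overlap in the arc $(b,c)$, and the two sequences of edges must ``cross over'' each other somewhere in that overlap region. The goal is to find indices $i$ and $j$ such that $e_i$ crosses $f_j$, and then form the sequence $e_1,\ldots,e_i,f_j,f_{j+1},\ldots,f_\ell$, which is a crossing sequence from $a$ to $d$ once we prune repeated edges.

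First I would set up the notion of an edge ``separating'' a vertex. For an edge $xy$ with $x\prec y$ and a vertex $w\notin\{x,y\}$, say $w$ is \emph{enclosed} by $xy$ if $x\prec w\prec y$. The key structural observations are: (a) if $e$ crosses $e'$, then exactly one endpoint of $e'$ is enclosed by $e$, and symmetrically; (b) along a crossing sequence from $u$ to $v$, one can track the ``current position'' in a way that advances monotonically from $u$ toward $v$ (counterclockwise). Concretely, I would argue that for a crossing sequence $e_1,\dots,e_k$ from $a$ to $c$, the vertex $b$ (which lies strictly between $a$ and $c$) is enclosed by the union of the arcs spanned by the $e_i$ in the following sense: either some edge $e_i$ has $b$ strictly between its endpoints, or $b$ is an endpoint of some $e_i$, or — tracking the ``handover'' vertices shared between consecutive crossings — the position passes $b$ at some step. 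In all cases I obtain an edge $e_i$ that ``sees past'' $b$: precisely, $e_i$ has one endpoint $\preceq b$ (on the $a$-side, counting cyclically) and is crossed in a way that lets the sequence continue past $b$.

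The main step is then a discrete intermediate-value argument. Consider walking along $f_1,\dots,f_\ell$ from $b$ toward $d$. At the start ($f_1$ emanating from $b$) the second sequence is ``behind or at'' the first with respect to the overlap arc $(b,c)$; by the time we reach $f_\ell$ (ending at $d$, which is outside $(a,c)$ counterclockwise from $c$) the second sequence has ``gotten ahead'' of the first. Formally, for each $j$ I would define whether $f_j$ lies ``before'' or ``after'' the barrier formed by the first crossing sequence — using (a) to decide, via which endpoint of $f_j$ is enclosed by the relevant $e_i$ — and show this predicate flips exactly once it is forced to, at which point the edge $f_j$ at the flip must actually cross some $e_i$ (two edges with interleaved endpoints are crossing). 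That crossing is the splice point. The hard part will be making the ``before/after the barrier'' predicate genuinely well-defined and monotone: a single crossing sequence is not a simple closed curve, so I must be careful that the edges $e_1,\dots,e_k$ together behave like a barrier separating the arc strictly left of $a$ from everything counterclockwise-past $c$, and that $f_1$ starts on one side and $f_\ell$ ends on the other. I expect this to reduce, after unwinding, to a short case analysis on the cyclic order of six points (the two endpoints of the candidate $e_i$, the two of $f_j$, and $a,d$), using Lemma~\ref{lem:rep}-style reasoning only combinatorially (no geometry needed, since crossing is defined purely by the order). Once the splice point is found, verifying that the concatenated sequence is a valid crossing sequence from $a$ to $d$ — distinct edges after deletion of duplicates, consecutive edges crossing, correct endpoints — is routine.
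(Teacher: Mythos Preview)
Your splice-at-a-crossing plan matches the paper's, but you are missing the one observation that replaces all of your intermediate-value and barrier machinery with two lines. The relation ``$e$ crosses $f$'' is \emph{directional}: writing the ends of $e$ as $p\prec q$ and of $f$ as $r\prec s$, it means precisely $p\prec r\prec q\prec s$. Hence along any crossing sequence both the smaller ends and the larger ends are strictly increasing in $\prec$. In particular every larger end among $e_1,\ldots,e_k$ is $\preceq c\prec d$, and every smaller end among $f_1,\ldots,f_\ell$ is $\succeq b$. No cyclic thinking is needed (we already have $a\prec b\prec c\prec d$ in the linear order), and no ``barrier'' needs to be made well-defined.

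With monotonicity in hand the paper simply takes $i$ minimal such that the larger end $v$ of $e_i$ satisfies $b\prec v$ (so the smaller end of $e_i$ is $\prec b$: it equals $a$ if $i=1$, and otherwise lies strictly between the ends of $e_{i-1}$, both of which are $\preceq b$), and $j$ maximal such that the smaller end of $f_j$ is $\prec v$ (so the larger end of $f_j$ is $\succ v$: it equals $d\succ c\succeq v$ if $j=\ell$, and otherwise exceeds the smaller end of $f_{j+1}$, which is $\succeq v$). Then $e_i$ crosses $f_j$ outright, and $e_1,\ldots,e_i,f_j,\ldots,f_\ell$ is the desired crossing sequence from $a$ to $d$ with no pruning required (every $e_p$ with $p\leq i$ has smaller end $\prec b$ while every $f_q$ has smaller end $\succeq b$, so no edge repeats). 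Your invocation of Lemma~\ref{lem:rep} is a red herring; that lemma concerns geometric realizations and is irrelevant to this purely order-theoretic statement.
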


\begin{proof}
Let $e_1,\ldots,e_k$ and $f_1,\ldots,f_t$ be crossing sequences from $a$ to $c$ and from $b$ to $d$, respectively.
Let $e_i$ be the edge with the smallest index such that its larger end, say $v$, is greater than $b$ in $\prec$.
Let $f_j$ be the edge with the largest index such that its smaller end is less than $v$ in $\prec$.
Then $e_i$ crosses $f_j$ and $e_1,\ldots,e_i,f_j,\ldots,f_t$ is a crossing sequence from $a$ to $d$.
\end{proof}

The first family of obstructions, which we denote by $\cH$, is defined as follows: $\cH$ is the family of all ordered graphs containing two non-adjacent vertices $u$ and $v$ such that there exist a crossing sequence from $u$ to $v$ and a crossing sequence from $v$ to $u$.
See Figure~\ref{fig:obstructions} (left) for an illustration.
The second family of obstructions is the family of ordered holes.
An \emph{ordered hole} is an ordered graph $(H,{\prec})$ on vertex set $V(H)=\{c_1,\dots,c_k\}$, where $k\geq 4$ and $c_1\prec\cdots\prec c_k$, with edge set $E(H)=\{c_1c_2,\ldots,c_{k-1}c_k,c_kc_1\}$; see Figure~\ref{fig:obstructions} (middle).

\begin{proposition}
\label{prop:H-free}
Every ordered curve pseudo-visibility graph is\/ $\cH$-free.
\end{proposition}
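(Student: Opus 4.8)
The plan is to argue geometrically after reducing to a convenient representation. First, by Proposition~\ref{prop:general-position}, every ordered curve pseudo-visibility graph is an induced ordered subgraph of one of the form $G_\cL(K,V)$ in which $(\cL,V)$ is in general position and $K$ is a pseudo-polygon on $\cL$; and being $\cH$-free is a hereditary property, since an induced ordered subgraph of an $\cH$-free ordered graph is again $\cH$-free. So it suffices to prove the statement for such $G$. Assume therefore that $G=G_\cL(K,V)$ with $(\cL,V)$ in general position and $K$ a pseudo-polygon, and suppose towards a contradiction that $G\in\cH$: there are non-adjacent $u,v\in V$ together with a crossing sequence from $u$ to $v$ and one from $v$ to $u$. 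Since these notions are rotation-invariant, fix a natural order $\prec$ with $u$ smallest, so in particular $u\prec v$. Write $R$ for the bounded component of $\R^2\setminus K$ and $\bar R=R\cup K$, and let $A$ and $A'$ be the two arcs of $K$ between $u$ and $v$, traversed from $u$ to $v$ and from $v$ to $u$, respectively, in the counterclockwise direction. The goal is to show that the pseudoline segment $\sigma_{uv}$ joining $u$ and $v$ lies in $\bar R$, contradicting $uv\notin E(G)$.

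The first step is to turn each crossing sequence into a simple arc inside $\bar R$. Every edge of $G$, as a pseudoline segment, lies in $\bar R$; moreover no edge of $G$ crosses a boundary edge of $K$, since the endpoints of a boundary edge are consecutive on $K$ and so nothing interleaves with them cyclically. Hence in a crossing sequence of length at least $2$ (which is our case, as otherwise $uv\in E(G)$) every edge is a non-boundary edge, and by general position its segment meets $K$ exactly in its two endpoints. Given the crossing sequence $e_1,\dots,e_k$ from $u$ to $v$, Lemma~\ref{lem:rep} shows that $\sigma_{e_i}$ and $\sigma_{e_{i+1}}$ intersect, and by general position they meet in a single point $p_i$ interior to both. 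Concatenating the subsegments $\sigma_{e_1}[u,p_1],\sigma_{e_2}[p_1,p_2],\dots,\sigma_{e_k}[p_{k-1},v]$ produces a walk from $u$ to $v$ in $\bar R$ whose image meets $K$ only in $\{u,v\}$, because each interior subsegment lies strictly between two interior points of the corresponding $\sigma_{e_i}$ and therefore avoids its endpoints, the only points of $\sigma_{e_i}\cap K$. Passing to a simple sub-arc, we obtain a simple arc $\Gamma$ from $u$ to $v$ with $\Gamma\subseteq\bar R$ and $\Gamma\cap K=\{u,v\}$; thus $\Gamma$ is a cross-cut of the disk $\bar R$, its interior avoids both $A^\circ$ and $A'^\circ$, and it splits $\bar R$ into two closed disks, one bounded by $\Gamma\cup A$ and one by $\Gamma\cup A'$ --- each contained in $\bar R$, since its boundary lies in $\bar R$ and hence is disjoint from the connected unbounded exterior of $K$. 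Running the same construction on the crossing sequence from $v$ to $u$ yields a second such cross-cut $\Gamma'$.

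To finish, I would use $\Gamma$ and $\Gamma'$ as certificates that neither arc of $K$ obstructs visibility between $u$ and $v$. The points $u$ and $v$ lie on $A$, $A'$, $\Gamma$ and $\Gamma'$, so if $\sigma_{uv}$ left $\bar R$ it would have to cross $K$, and every such crossing lies on $A^\circ$ or $A'^\circ$. Suppose $\sigma_{uv}$ crosses $A'^\circ$, entering the exterior of $K$ there; following $\sigma_{uv}$ until it re-enters $\bar R$ and comparing with the arc $\Gamma'$, which joins $u$ to $v$ inside $\bar R$ while avoiding $A^\circ$, one obtains an outerplanar drawing of $K_4$ in the spirit of Lemma~\ref{lem:rep}, a contradiction; symmetrically $\sigma_{uv}$ does not cross $A^\circ$. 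Hence $\sigma_{uv}\cap K=\{u,v\}$, and since $uv$ is not a boundary edge and pseudolines meet transversally, one more application of the same reasoning gives $\sigma_{uv}\subseteq\bar R$, as required.

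The hard part is exactly this last step: converting the existence of $\Gamma$ and $\Gamma'$ into the conclusion $\sigma_{uv}\subseteq\bar R$ requires a careful topological case analysis, according to whether $\Gamma$ and $\Gamma'$ meet only in $\{u,v\}$ or cross, and according to how $\sigma_{uv}$ might weave between the pieces they cut out. I expect it may be cleaner to keep the crossing-sequence data itself rather than only the extracted arcs and argue by induction on the combined length $k+t$ of the two sequences, using Lemma~\ref{lem:almostTran} together with the observation that the last edge of the sequence from $u$ to $v$ has its smaller end on $A^\circ$ (and symmetrically for the other sequence) to split off one edge and pass to a shorter instance, the base case being a single edge forcing $uv\in E(G)$. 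Throughout, the possible degeneracies --- a crossing-sequence segment passing through a vertex, or $\sigma_{uv}$ being tangent to $K$ at $u$, $v$, or a vertex --- are excluded by general position, and any remaining ones can be removed by a small perturbation of $K$ preserving $G_\cL(K,V)$ and the cyclic order of $V$, as in the proof of Proposition~\ref{prop:pseudo-polygon}.
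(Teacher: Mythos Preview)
Your reduction to general position and the construction of the arcs $\Gamma,\Gamma'$ from the two crossing sequences are fine and essentially match the paper's first moves. The gap is the step you yourself flag as ``the hard part'': from the mere existence of cross-cuts $\Gamma,\Gamma'$ of the disk $\bar R$ you cannot conclude that the pseudoline segment $\sigma_{uv}$ stays in $\bar R$. Your appeal to ``an outerplanar drawing of $K_4$ in the spirit of Lemma~\ref{lem:rep}'' does not go through, because that lemma is about two \emph{pseudoline} segments in the fixed arrangement $\cL$, whereas $\Gamma'$ is a concatenation of pieces of several pseudolines and carries no such rigidity; there is nothing preventing $\sigma_{uv}$ from exiting $\bar R$ through $A'$ while $\Gamma'$ simply detours around the dent. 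The induction on $k+t$ you propose as an alternative is not the right mechanism either: peeling off one edge of a crossing sequence does not produce a shorter instance of the same problem, since the new endpoint need not be non-adjacent to the other vertex.

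What the paper does instead is use $k+t$ as a \emph{minimality} assumption rather than an induction parameter. Choosing the two crossing sequences with $k+t$ minimum, Lemma~\ref{lem:almostTran} forces non-consecutive edges within one sequence to be non-crossing and forces every $e_i$ to be non-crossing with every $f_j$; via Lemma~\ref{lem:rep} this means the open segments $\ell_{e_i}$ and $\ell_{f_j}$ are pairwise disjoint except for consecutive pairs in one sequence. Hence your $\Gamma$ and $\Gamma'$ are automatically simple and disjoint, and $\Gamma\cup\Gamma'\cup\{u,v\}$ is itself a pseudo-polygon on $\cL$ whose articulation points, apart from possibly $u$ and $v$, are all convex (one always turns the same way at each intersection). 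The conclusion $\sigma_{uv}\subseteq\bar R$ then comes from Lemma~\ref{lem:forcing} (which in turn rests on Lemma~\ref{lem:convex}), applied to this new pseudo-polygon: that lemma is precisely the device that converts ``only $u,v$ can be concave'' into ``the pseudoline segment $uv$ stays inside''. This convexity/forcing machinery is the missing idea in your attempt.
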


\begin{proposition}
\label{prop:ordered-hole-free}
Every ordered curve pseudo-visibility graph is ordered-hole-free.
\end{proposition}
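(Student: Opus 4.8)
The plan is to argue by contradiction and reduce everything to a planar fact about pseudo-polygons. By Proposition~\ref{prop:general-position} we may assume our curve pseudo-visibility graph is $G=G_\cL(K,V)$ with $(\cL,V)$ in general position (passing to the pseudo-polygon provided there keeps any induced ordered hole induced and preserves the natural order on its vertices). Suppose, for contradiction, that vertices $v_1\prec\cdots\prec v_k$ with $k\geq 4$ induce an ordered hole in $(G,{\prec})$, so that among these $k$ vertices the only edges are $v_1v_2,\ldots,v_{k-1}v_k,v_kv_1$. Write $s_i$ for the pseudoline segment in $\cL$ realizing the edge $v_iv_{i+1}$ (indices modulo $k$). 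An easy check on the order $\prec$ shows that the edges $s_1,\ldots,s_k$ are pairwise non-crossing, so by Lemma~\ref{lem:rep} their open segments are pairwise disjoint; since consecutive $s_i$'s meet only in a common endpoint, $\gamma:=s_1\cup\cdots\cup s_k$ is a Jordan curve. Each $s_i$ is disjoint from the exterior of $K$, hence $\gamma\subseteq K\cup\int(K)$, where $\int(K)$ is the bounded component of $\R^2\setminus K$; as the exterior of $K$ is connected, unbounded and disjoint from $\gamma$, it lies in the unbounded component of $\R^2\setminus\gamma$, so $\int(\gamma)\subseteq\int(K)$ and (taking closures) the closed region bounded by $\gamma$ is disjoint from the exterior of $K$.

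Next I would note that $\gamma$ is itself a pseudo-polygon on $\cL$ with vertex set $W:=\{v_1,\ldots,v_k\}$ (each side $s_i$ is a pseudoline segment and the cyclic order of $W$ along $\gamma$ is $v_1,\ldots,v_k$), and that $(\cL,W)$ is in general position. It then suffices to find two vertices $v_i,v_j$ that are \emph{not} consecutive in this cyclic order and whose connecting pseudoline segment lies in the closed region bounded by $\gamma$: by the previous paragraph that segment is then disjoint from the exterior of $K$, so $v_i$ and $v_j$ are mutually visible in $K$ and thus adjacent in $G$ — contradicting that the hole is induced, since $v_iv_j$ is none of $v_1v_2,\ldots,v_kv_1$.

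Thus the whole argument comes down to the statement that every pseudo-polygon on at least four vertices has a diagonal, which is the pseudoline analogue of the classical fact for simple polygons. I would prove it by mimicking the usual proof: take a convex vertex $v_i$ of $\gamma$ (for instance an extreme point of $W$, extracted from the pseudo-configuration / rank-$3$ oriented matroid structure of $(\cL,W)$) and consider the pseudo-triangle $\Delta$ bounded by $s_{i-1}$, $s_i$ and the pseudoline segment from $v_{i-1}$ to $v_{i+1}$ (a Jordan curve, since $v_{i-1},v_i,v_{i+1}$ lie on no common pseudoline by general position). If $\overline\Delta$ contains no vertex of $\gamma$ besides $v_{i-1},v_i,v_{i+1}$, then $v_{i-1}v_{i+1}$ is a diagonal; otherwise take the vertex $v_\ell\in\Delta$ that is ``innermost toward $v_i$'' — the one such that no other vertex of $\gamma$ lies in the sub-pseudo-triangle it cuts off together with $v_i$ — and then $v_iv_\ell$ is a diagonal. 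In both cases the two endpoints are non-consecutive along $\gamma$, which delivers the contradiction.

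I expect the main obstacle to be exactly this last step: making the extremal choice of a convex vertex and the ``first vertex hit'' selection of $v_\ell$ rigorous in the pseudoline setting, where there is no literal angle at a point and no notion of parallel lines, so these must be phrased purely in terms of the underlying pseudo-configuration. Everything else — the non-crossing check, the Jordan-curve topology placing $\gamma$ inside $K$, and the reduction to general position — is routine.
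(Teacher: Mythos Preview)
Your reduction is correct and coincides with the paper's: pass to general position, build the Jordan curve $\gamma$ from the segments $s_i$, observe via Lemma~\ref{lem:rep} that $\gamma$ is a pseudo-polygon on $\cL$ with articulation points $v_1,\ldots,v_k$, and note that any diagonal of $\gamma$ is automatically disjoint from the exterior of $K$, contradicting that the hole is induced. So the whole proposition boils down to ``every pseudo-polygon on at least four vertices has a diagonal'', exactly as you say.

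Where you diverge from the paper is in how you propose to find that diagonal. Your plan is the classical ear argument: pick a convex vertex $v_i$, look at the pseudo-triangle $\Delta=v_{i-1}v_iv_{i+1}$, and either take $v_{i-1}v_{i+1}$ or a segment $v_iv_\ell$ to an ``innermost'' vertex of $\Delta$. The first case goes through cleanly (any $s_j$ meeting $\Delta$ must have an endpoint in $\overline\Delta$, since it cannot cross $s_{i-1}$ or $s_i$ and meets the third side at most once). The second case, however, is a real gap, not just a matter of phrasing: in the straight-line proof the ``innermost'' vertex is selected by distance to the line $v_{i-1}v_{i+1}$ (equivalently, by a parallel sweep), and it is precisely this linear structure that guarantees no edge of the polygon can cross $v_iv_\ell$. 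In a pseudoline arrangement there is no metric and no parallels, and your phrase ``the sub-pseudo-triangle it cuts off together with $v_i$'' does not name a well-defined region. You would need to manufacture an extremal order on the candidate vertices purely from the combinatorics of $\cL$, and then check that an extremal choice really blocks all crossings of $v_iv_\ell$; this can be done, but it is nontrivial and you have not done it.

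The paper sidesteps this by never trying to locate a single diagonal directly. Instead it uses two prepared tools: Lemma~\ref{lem:convex} (every pseudo-polygon has at least three convex articulation points) to supply your convex vertex $p_2$, and Lemma~\ref{lem:forcing} (if all articulation points except possibly $u,v$ are convex, the segment $uv$ stays inside) as a replacement for the ear argument. Since $p_1p_3\notin E(G)$, Lemma~\ref{lem:forcing} forces a concave $p_i$, and then the paper passes to a \emph{minimal} sub-pseudo-polygon $K'$ (still containing $p_1p_2p_3$) and analyses where the two pseudoline segments emanating from this concave $p_i$ land on $K'[p_1,p_3]$. Repeating the argument at a second concave vertex $p_j$ produces a convex sub-pseudo-polygon containing $p_2$ and some non-neighbour, and Lemma~\ref{lem:forcing} then yields the forbidden visibility edge. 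So the paper trades your single extremal vertex selection for a minimality argument on regions plus the forcing lemma; the latter is what does the work your ``innermost'' step was supposed to do.
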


We prove Propositions \ref{prop:H-free} and~\ref{prop:ordered-hole-free} later in this section.
Before that, we show that we can test in polynomial time whether a given ordered graph is free of the considered obstructions.

\begin{proposition}
\label{prop:H-free-alg}
There is a polynomial-time algorithm which takes in an ordered graph\/ $(G,{\prec})$ and determines whether\/ $(G,{\prec})$ is\/ $\cH$-free.
\end{proposition}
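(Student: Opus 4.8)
The plan is to reduce $\cH$-freeness to polynomially many reachability queries in an auxiliary graph. Define the \emph{crossing graph} $C$ of $(G,{\prec})$ to have vertex set $E(G)$, with two edges of $G$ adjacent in $C$ precisely when they cross in $(G,{\prec})$. Whether $e$ crosses $f$ is determined by the relative $\prec$-order of the four endpoints involved, so $C$ can be built in time polynomial in $|V(G)|$. The key point is that, directly from the definitions, a sequence $e_1,\dots,e_k$ of edges of $G$ with $e_i$ crossing $e_{i+1}$ for $1\le i<k$ is exactly a walk in $C$; moreover, a shortest walk in $C$ between two prescribed sets of vertices visits no vertex twice, hence is a sequence of \emph{distinct} edges of $G$. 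So the existence of a crossing sequence from one vertex to another reduces to a set-to-set reachability question in $C$, where the source and target vertex sets of $C$ encode the two endpoint conditions in the definition of a crossing sequence.

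More precisely, fix a non-adjacent pair $\{u,v\}$ of vertices of $G$, and assume $u\prec v$. By the discussion above, there is a crossing sequence from $u$ to $v$ if and only if, in $C$, some edge of $G$ whose $\prec$-larger endpoint is $v$ is reachable from some edge of $G$ whose $\prec$-smaller endpoint is $u$. For a crossing sequence from $v$ to $u$ I would run the same argument in the rotation $\prec'$ of $\prec$ that makes $v$ the minimum element (a legitimate rotation witnessing $v\prec'u$, as in the definition); since being a crossing pair is preserved under rotation, $C$ is the same graph, while the source set becomes the set of all edges incident to $v$ and the target set becomes the set of edges $uw\in E(G)$ with $w\prec u$ or $v\prec w$ --- equivalently, the edges at $u$ whose other endpoint does not lie strictly $\prec$-between $u$ and $v$. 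Each of these four subsets of $V(C)$ is computed from $\prec$ in polynomial time, and each reachability query is answered by a breadth-first search in $C$.

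The algorithm then iterates over all non-adjacent pairs $\{u,v\}$ of vertices of $G$, running the two searches described above for each; it outputs ``not $\cH$-free'' as soon as it encounters a pair admitting crossing sequences in both directions, and ``$\cH$-free'' if no such pair exists. Correctness is immediate from the definition of $\cH$ together with the reachability characterisation, and the running time is polynomial: there are $O(|V(G)|^2)$ pairs, and each breadth-first search costs $O(|E(C)|)=O(|E(G)|^2)$.

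I expect the only point requiring care to be the cyclic bookkeeping: handling the two directions $u\to v$ and $v\to u$ with the correct rotation so that the source and target sets of edges in $C$ match exactly the endpoint conditions in the definition of a crossing sequence. The construction of $C$, the equivalence between crossing sequences and walks (using the shortcutting observation for the non-obvious direction), and the complexity bound are all routine.
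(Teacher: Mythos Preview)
Your approach is essentially the paper's---build an auxiliary graph on $E(G)$ and reduce the existence of crossing sequences to reachability---but there is a genuine gap: your crossing graph $C$ is undirected, whereas the relation ``$e$ crosses $f$'' in the paper is \emph{asymmetric} (for $a\prec b\prec c\prec d$, the edge $ac$ crosses $bd$, but $bd$ does not cross $ac$), and the definition of a crossing sequence requires $e_i$ to cross $e_{i+1}$ in this directed sense. Hence a walk in your undirected $C$ need not yield a crossing sequence. Concretely, take $u=3$, $v=10$ and edges $e_1=\{3,5\}$, $e_2=\{1,4\}$, $e_3=\{2,10\}$. Then $e_2$ crosses $e_1$ (since $1\prec 3\prec 4\prec 5$) and $e_2$ crosses $e_3$ (since $1\prec 2\prec 4\prec 10$), so $e_1e_2e_3$ is a path in $C$ from your source set to your target set; yet there is no crossing sequence from $3$ to $10$, because $e_1$ is the only edge with smaller end $3$ and it crosses neither $e_2$ nor $e_3$. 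Your claimed equivalence ``crossing sequence from $u$ to $v$ $\Longleftrightarrow$ reachability in $C$'' therefore fails in the $\Longleftarrow$ direction.

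The fix is exactly what the paper does: make the auxiliary graph \emph{directed}, with an arc from $e$ to $f$ precisely when $e$ crosses $f$. Then a crossing sequence from $u$ to $v$ (assuming $u\prec v$ after rotation) exists if and only if there is a directed path from some edge with smaller end $u$ to some edge with larger end $v$. With this correction, your shortcutting argument (a shortest directed walk repeats no vertex, giving distinct edges), your handling of the rotation for the $v\to u$ direction, and your complexity analysis all go through unchanged.
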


\begin{proof}
It suffices to test, for any two non-adjacent vertices $u$ and $v$, whether $(G,{\prec})$ has a crossing sequence from $u$ to $v$.
We assume that $u\prec v$ after possibly performing a rotation.
We create a directed graph $\vec{H}$ with a vertex for each edge of $G$ and with an arc from $e$ to $f$ for each pair of edges of $G$ such that $e$ crosses $f$.
Then there is a crossing sequence from $u$ to $v$ in $(G,{\prec})$ if and only if there is an edge $e$ with smaller end $u$ and an edge $f$ with larger end $v$ such that $\vec{H}$ has a directed path from $e$ to $f$.
\end{proof}

\begin{proposition}
There is a polynomial-time algorithm which takes in an ordered graph\/ $(G,{\prec})$ and determines whether\/ $(G,{\prec})$ has an ordered hole.
\end{proposition}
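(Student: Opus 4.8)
The plan is to reduce the problem to a reachability question, in the spirit of the proof of Proposition~\ref{prop:H-free-alg}. First I would record the following characterisation: $(G,{\prec})$ contains an ordered hole if and only if there is an edge $ab$ with $a\prec b$ such that $b$ is reachable from $a$ by a \emph{$\prec$-increasing path} (a sequence $a=p_0\prec p_1\prec\cdots\prec p_m=b$ of distinct vertices with $p_ip_{i+1}\in E(G)$ for all $i$) in the graph $G_{ab}$ obtained from $G$ by deleting the edge $ab$ and every common neighbour of $a$ and $b$. Given this equivalence the algorithm is immediate: for each edge $ab$, build $G_{ab}$, orient each of its edges from its $\prec$-smaller endpoint to its $\prec$-larger endpoint, and test by a graph search whether $b$ is reachable from $a$ in the resulting acyclic digraph; over all edges this runs in polynomial time.

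For the forward direction, given an ordered hole on $c_1\prec\cdots\prec c_k$ with $k\geq 4$, set $a=c_1$ and $b=c_k$; then $ab=c_1c_k\in E(G)$. The path $c_1,c_2,\ldots,c_k$ is $\prec$-increasing, and since $k\geq 4$ none of its edges is $ab$. Moreover no $c_i$ is a common neighbour of $a$ and $b$: for $2\leq i\leq k-1$ the only neighbours of $c_i$ in the hole are $c_{i-1}$ and $c_{i+1}$, and $\{c_{i-1},c_{i+1}\}=\{c_1,c_k\}$ would force $k=3$; and $c_1,c_k$ are not adjacent to themselves. Hence $c_1,\ldots,c_k$ is a $\prec$-increasing path from $a$ to $b$ in $G_{ab}$.

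For the converse, suppose $b$ is reachable from $a$ in $G_{ab}$ by a $\prec$-increasing path, and let $P\colon a=p_0\prec\cdots\prec p_m=b$ be a shortest such path. The key point is the standard shortening argument: if $p_ip_j\in E(G_{ab})$ for some $i<j$ with $j\geq i+2$, then $p_0,\ldots,p_i,p_j,\ldots,p_m$ would be a strictly shorter $\prec$-increasing path from $a$ to $b$ in $G_{ab}$; so $P$ has no chord in $G_{ab}$, and since $G_{ab}$ is an induced subgraph of $G-ab$, it has no chord in $G-ab$ either. Furthermore $m\geq 3$: we cannot have $m\leq 1$ since $ab\notin E(G_{ab})$, and we cannot have $m=2$ since then the middle vertex $p_1$ would be a common neighbour of $a$ and $b$ and hence absent from $G_{ab}$. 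Therefore the edges of $G$ with both endpoints in $\{p_0,\ldots,p_m\}$ are exactly $p_0p_1,\ldots,p_{m-1}p_m$ together with $p_mp_0=ab$, so $\{p_0,\ldots,p_m\}$ with the order $\prec$ induces an ordered hole on $m+1\geq 4$ vertices.

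I expect the only genuine subtlety to be the step forcing the witnessing path to have length at least three: a shortest $\prec$-increasing $a$–$b$ path in $G-ab$ itself need not be an induced path of length $\geq 3$, since a detour of length two through a common neighbour of $a$ and $b$ can coexist with a longer induced path. Deleting the common neighbours of $a$ and $b$ before searching is precisely what rules out the short detours, and the forward direction above is exactly the verification that this deletion destroys no actual ordered hole with closing edge $ab$. Everything else is routine bookkeeping.
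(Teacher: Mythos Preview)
Your proposal is correct and follows essentially the same approach as the paper: for each edge $ab$, delete the common neighbours of $a$ and $b$ (the paper phrases this as ``removing all vertices in a triangle with $u$ and $v$'') and test for a $\prec$-increasing path from $a$ to $b$ in the resulting digraph. Your write-up is in fact more careful than the paper's terse version, since you explicitly remove the edge $ab$ and verify both directions of the equivalence, including the $m\geq 3$ step.
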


\begin{proof}
It suffices to test, for any two adjacent vertices $u\prec v$ of $G$, whether $u$ and $v$ are the first and last vertices of an ordered hole.
This can be done by removing all vertices in a triangle with $u$ and $v$ and then testing for a directed path from $u$ to $v$ in the natural digraph.
\end{proof}

\begin{figure}[t]
\centering
\begin{tikzpicture}[scale=0.19, every node/.style={inner sep=2, outer sep=0}, vtx/.style={draw, circle, fill=gray}]
  \coordinate (A1) at (-2,8.5) {};
  \coordinate (A2) at (0,0) {};
  \coordinate (A3) at (-9,-2) {};
  \coordinate (A4) at (2,-9) {};
  \coordinate (A5) at (9,3) {};
  \coordinate (L124_B1_1) at (-2.5,14) {};
  \coordinate (L124_A1_1) at (-2.2,6) {};
  \coordinate (L124_A1_2) at (-2,4) {};
  \coordinate (L124_A2_1) at (3,-2) {};
  \coordinate (L124_A2_2) at (3,-6) {};
  \coordinate (L124_A4_1) at (1,-14) {};
  \coordinate (L13_B1_1) at (8,14) {};
  \coordinate (L13_B1_2) at (4,11) {};
  \coordinate (L13_A3_1) at (-9,-8) {};
  \coordinate (L13_A3_2) at (-10,-14) {};
  \coordinate (L325_B3_1) at (-14,-2) {};
  \coordinate (L325_A3_1) at (-7,-2.3) {};
  \coordinate (L325_A3_2) at (-4,-2) {};
  \coordinate (L325_A2_1) at (2,2) {};
  \coordinate (L325_A2_2) at (6,3.2) {};
  \coordinate (L325_A5_1) at (14,3) {};
  \coordinate (L15_B1_1) at (-14,6) {};
  \coordinate (L15_B1_2) at (-8,6.75) {};
  \coordinate (L15_A1_1) at (2,8) {};
  \coordinate (L15_A1_2) at (6,6) {};
  \coordinate (L15_A5_1) at (11,-2) {};
  \coordinate (L15_A5_2) at (14,-6) {};
  \coordinate (L34_B3_1) at (-10,14) {};
  \coordinate (L34_B3_2) at (-10,8) {};
  \coordinate (L34_A3_1) at (-7,-4) {};
  \coordinate (L34_A3_2) at (-4,-6) {};
  \coordinate (L34_A4_1) at (14,-12) {};
  \coordinate (L45_B4_1) at (-14,-11) {};
  \coordinate (L45_A4_1) at (5,-8) {};
  \coordinate (L45_A4_2) at (9,-3.5) {};
  \coordinate (L45_A4_3) at (9.25,0) {};
  \coordinate (L45_A5_1) at (14,10) {};
  \useasboundingbox (-17,-16) rectangle (17,16);
  \draw[use Hobby shortcut] (L124_B1_1) .. (A1) .. (L124_A1_1) .. (L124_A1_2) .. (A2) .. (L124_A2_1) .. (L124_A2_2) .. (A4) .. (L124_A4_1);
  \draw[use Hobby shortcut] (L325_B3_1) .. (A3) .. (L325_A3_1) .. (L325_A3_2) .. (A2) .. (L325_A2_1) .. (L325_A2_2) .. (A5) .. (L325_A5_1);
  \draw[use Hobby shortcut] (L15_B1_1) .. (L15_B1_2) .. (A1) .. (L15_A1_1) .. (L15_A1_2) .. (A5) .. (L15_A5_1) .. (L15_A5_2);
  \draw[use Hobby shortcut] (L13_B1_1) .. (L13_B1_2) .. (A1) .. (A3) .. (L13_A3_1) .. (L13_A3_2);
  \draw[use Hobby shortcut] (L34_B3_1) .. (L34_B3_2) .. (A3) .. (L34_A3_1) .. (L34_A3_2) .. (A4) .. (L34_A4_1);
  \draw[use Hobby shortcut] (L45_B4_1) .. (A4) .. (L45_A4_1) .. (L45_A4_2) .. (L45_A4_3) .. (A5) .. (L45_A5_1);
  \draw[line width=3pt, use Hobby shortcut] (L124_B1_1) .. ([blank]A1) .. (L124_A1_1) .. (L124_A1_2) .. (A2) .. ([blank]L124_A2_1) .. ([blank]L124_A2_2) .. ([blank]A4) .. ([blank]L124_A4_1);
  \draw[line width=3pt, use Hobby shortcut] (L325_B3_1) .. ([blank]A3) .. (L325_A3_1) .. (L325_A3_2) .. (A2) .. ([blank]L325_A2_1) .. ([blank]L325_A2_2) .. ([blank]A5) .. ([blank]L325_A5_1);
  \draw[line width=3pt, use Hobby shortcut] (L15_B1_1) .. ([blank]L15_B1_2) .. ([blank]A1) .. (L15_A1_1) .. (L15_A1_2) .. (A5) .. ([blank]L15_A5_1) .. ([blank]L15_A5_2);
  \draw[line width=3pt, use Hobby shortcut] (L34_B3_1) .. ([blank]L34_B3_2) .. ([blank]A3) .. (L34_A3_1) .. (L34_A3_2) .. (A4) .. ([blank]L34_A4_1);
  \draw[line width=3pt, use Hobby shortcut] (L45_B4_1) .. ([blank]A4) .. (L45_A4_1) .. (L45_A4_2) .. (L45_A4_3) .. (A5) .. ([blank]L45_A5_1);
  \node[vtx] at (A1) {};
  \node[vtx] at (A2) {};
  \node[vtx] at (A3) {};
  \node[vtx] at (A4) {};
  \node[vtx] at (A5) {};
  \node at ($(A1)+(-1,0.5)$) [above] {$1$};
  \node at ($(A2)+(0,-0.65)$) [below] {$2$};
  \node at ($(A3)+(-1,-0.5)$) [below] {$3$};
  \node at ($(A4)+(0.8,-0.6)$) [below] {$4$};
  \node at ($(A5)+(1.2,0.35)$) [above] {$5$};
  \node at (L34_B3_1) [above] {$34$};
  \node at (L34_A4_1) [right] {$34$};
  \node at (L124_B1_1) [above] {$124$};
  \node at (L124_A4_1) [below] {$124$};
  \node at (L325_B3_1) [left] {$325$};
  \node at (L325_A5_1) [right] {$325$};
  \node at (L15_B1_1) [left] {$15$};
  \node at (L15_A5_2) [right] {$15$};
  \node at (L13_B1_1) [above] {$13$};
  \node at (L13_A3_2) [below] {$13$};
  \node at (L45_B4_1) [left] {$45$};
  \node at (L45_A5_1) [right] {$45$};
\end{tikzpicture}
\caption{A pseudo-polygon with five articulation points: $1,3,4,5$ are convex, $2$ is concave.}
\label{fig:articulation}
\end{figure}
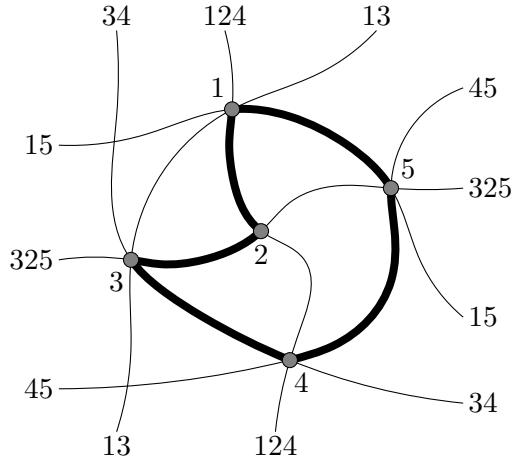

The proofs of Propositions \ref{prop:H-free} and~\ref{prop:ordered-hole-free} require some preparation.
Let $K$ be a pseudo-polygon on $\cL$.
A~\emph{segment} of $K$ is a part of $K$ that is contained in some pseudoline $L\in\cL$ and connects two distinct intersection points of $L$ with other pseudolines in $\cL$.
An \emph{articulation point} of $K$ is a point in $K$ that joins two segments of $K$ contained in distinct pseudolines in $\cL$.
Such an articulation point of $K$ is \emph{convex} if those two pseudolines extend to the exterior of $K$ at $p$, and it is \emph{concave} if they extend to the interior of $K$; see Figure~\ref{fig:articulation}.
The following lemma was proven by Arroyo, Bensmail, and Richter~\cite{arroyo20}; we provide a proof for the reader's convenience.

\begin{lemma}
\label{lem:convex}
Every pseudo-polygon on\/ $\cL$ has at least three convex articulation points.
\end{lemma}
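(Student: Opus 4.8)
The plan is to combine a left-to-right sweep of the plane with a short combinatorial argument in the (essentially one-dimensional) case that remains. After applying a homeomorphism, I would assume that every pseudoline in $\cL$ is the graph of a continuous function of the first coordinate $x$, and perturb so that no two vertices of the arrangement share an $x$-coordinate. Then every arc of $K$ (a maximal piece of $K$ lying on a single pseudoline) is $x$-monotone, and in particular $K$ has at least three articulation points: otherwise $K$ would be a closed curve covered by at most two pseudolines, which is impossible since a pseudoline is not closed and two pseudolines meet only once. Call an articulation point \emph{left-pointing} (resp.\ \emph{right-pointing}) if both incident arcs of $K$ leave it to the right (resp.\ to the left), and \emph{through} otherwise. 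At a left- or right-pointing articulation point the two rays of the pseudolines not used by $K$ are exactly the two ``backward'' rays, which span the larger of the two sectors into which $K$ locally divides a neighbourhood of the point; hence such a point is convex precisely when that larger sector is exterior to $K$, i.e.\ when the interior $R$ of $K$ lies in the other (smaller) sector. In particular the leftmost point $p_\ell$ and the rightmost point $p_r$ of $K$ are a left-pointing and a right-pointing convex articulation point respectively, since $K$ lies in $\{x\geq x(p_\ell)\}$ and in $\{x\leq x(p_r)\}$ and so $R$ lies to the right of $p_\ell$ and to the left of $p_r$.

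Next I would analyse the family of vertical slices $\ell_x\cap R$. As $x$ increases, a slice component is born or dies exactly at a convex left- or right-pointing articulation point, two components split or merge exactly at a concave left- or right-pointing one, and through points are not critical. Since $R$ is an open disk, the Reeb graph of this decomposition (after a further generic perturbation) is a tree whose leaves are the births and deaths and whose internal nodes, all of degree three, are the splits and merges; hence the number of convex left/right-pointing articulation points equals the number of concave ones plus two. Consequently, if some slice of $R$ is disconnected the tree has an internal node, hence at least three leaves, and we are done. So it remains to treat the case that $R$ is $x$-monotone, i.e.\ every slice is a single interval; then $K$ splits into an upper chain and a lower chain, both graphs of continuous functions on $[x(p_\ell),x(p_r)]$ agreeing only at the endpoints, and by the first paragraph at least one chain has an articulation point in its interior.

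For this last case the plan is to derive a contradiction from the assumption that every articulation point interior to a chain is concave. Write $L_1,\dots,L_k$ for the pseudolines carrying the arcs of the upper chain, in left-to-right order, so that the $j$-th interior articulation point is $L_j\cap L_{j+1}$. Concavity there means that the graph of $L_j$ continues below the upper chain (into $R$) just to the right of the crossing; since two pseudolines cross only once, the graph of $L_j$ stays below that of $L_{j+1}$ for all $x$ to the right of that crossing, and chaining these comparisons gives $L_1<\dots<L_k$ (as graphs) for all sufficiently large $x$. Symmetrically, with $M_1,\dots,M_l$ carrying the lower-chain arcs indexed along the chain from $p_\ell$, one gets $M_l<\dots<M_1$ for all sufficiently large $x$. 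Finally $p_\ell=L_1\cap M_1$, with the upper chain above the lower one just to its right, so $L_1>M_1$ for all $x>x(p_\ell)$; and $p_r=L_k\cap M_l$, with the upper chain above the lower one just to its left, so $L_k<M_l$ for all $x>x(p_r)$. For large $x$ these combine into $L_k<M_l<\dots<M_1<L_1<\dots<L_k$, a contradiction. Hence some interior articulation point of a chain is convex, and together with $p_\ell$ and $p_r$ this gives three convex articulation points.

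The step I expect to be the real work is the $x$-monotone case. The tempting move is to argue with slopes or with total turning, but the arcs of $K$ are genuinely curved pieces of pseudolines, so ``slope monotone between breakpoints'' fails and the usual ``sum of exterior angles is $2\pi$'' argument does not transfer. The key is not to reason with angles at all: the axiom that any two pseudolines cross exactly once upgrades each local slope comparison at a concave breakpoint into a global ordering of the relevant pseudolines, and it is these orderings that clash at the two endpoints. The other ingredients---the sweep classification of articulation points, the convexity of $p_\ell$ and $p_r$, and the Reeb-tree bookkeeping---are routine once set up, the only mild nuisance being the genericity perturbation ensuring that all critical events occur at distinct $x$-coordinates.
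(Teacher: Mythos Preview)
Your argument is correct, but it is far more elaborate than the paper's. The paper gives a five-line proof by minimal counterexample: take a pseudo-polygon $K$ with at most two convex articulation points and as few articulation points as possible; pick consecutive articulation points $p_1,p_2,p_3$ with $p_2$ concave and $p_3$ convex if any point is; extend the pseudoline through $p_1p_2$ past $p_2$ into the interior until it hits $K$ at some $a$; the sub-pseudo-polygon bounded by the segment $p_2a$ and the arc of $K$ from $a$ back to $p_2$ then has at most two convex articulation points (it gains at most $a$, and loses $p_2$ and $p_3$) but strictly fewer articulation points overall, contradicting minimality.

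Your route instead invokes the wiring-diagram representation of pseudoline arrangements, a Morse-theoretic sweep yielding the identity $\#(\text{convex L/R-pointing})=\#(\text{concave L/R-pointing})+2$ via the Reeb tree, and then your ordering argument for the $x$-monotone case. All three pieces are sound: the wiring-diagram theorem is classical, the Reeb-tree bookkeeping is exactly the Euler-characteristic count for a disk, and the final step is a genuinely nice use of the single-crossing axiom to turn local concavity into a global cyclic chain of strict inequalities. What you gain is the extra structural identity and a method that would adapt to other sweepable settings; what you pay is a dependence on the wiring-diagram theorem and a good deal more length. The paper's ``shortcut through a concave vertex'' trick is worth internalising, since variants of it drive the two lemmas that follow (Lemmas~\ref{lem:forcing} and the proof of Proposition~\ref{prop:ordered-hole-free}).
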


\begin{proof}
Suppose otherwise, and choose a counterexample $K$ with as few articulation points as possible.
Since $\cL$ is a pseudoline arrangement, $K$ has at least three articulation points.
Thus, we can choose consecutive articulation points $p_1$, $p_2$, and $p_3$ which occur on $K$ in that order counterclockwise so that $p_2$ is concave and if any articulation point is convex, then $p_3$ is convex.
Now, walk from $p_1$ towards $p_2$ along the pseudoline $L\in\cL$ passing through $p_1$ and $p_2$, and continue walking on $L$ beyond $p_2$ (through the interior of $K$, as $p_2$ is concave) until hitting $K$ at a point $a\in K\cap L$.
Let $K'$ denote the pseudo-polygon formed by the segment $p_2a$ of $L$ and the part of $K$ from $a$ to $p_2$ counterclockwise.
It follows that $K'$ has at most two convex articulation points and has fewer articulation points than $K$, because at most one articulation point, $a$, is gained, and the articulation points $p_2$ and $p_3$ of $K$ are lost.
This is a contradiction, completing the proof.
\end{proof}

\begin{lemma}
\label{lem:forcing}
Let\/ $K$ be a pseudo-polygon on\/ $\cL$.
Let\/ $u$ and\/ $v$ be distinct points on\/ $K$ such that\/ $\cL$ contains a pseudoline\/ $L$ passing through\/ $u$ and\/ $v$.
If all articulation points of\/ $K$ other than possibly\/ $u$ and\/ $v$ are convex, then the segment\/ $uv$ of\/ $L$ is disjoint from the exterior of\/ $K$.
\end{lemma}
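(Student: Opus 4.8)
The plan is to argue by contradiction using a minimal counterexample, with Lemma~\ref{lem:convex} as the engine. Write $\cR$ for the complement in $\R^2$ of the exterior of $K$ (the closed ``inside'' of $K$). Since $L$ is a simple curve, every connected component of $L\cap\cR$ is a sub-arc of $L$, and both $u$ and $v$ lie in $L\cap K\subseteq L\cap\cR$. If $u$ and $v$ lie in the same component of $L\cap\cR$, then the segment $uv$ of $L$, being the sub-arc of $L$ between $u$ and $v$, lies in that component, hence in $\cR$, hence is disjoint from the exterior of $K$. So it suffices to show that $u$ and $v$ lie in a common component of $L\cap\cR$. Suppose not, and among all counterexamples $(\cL,K,L,u,v)$ pick one in which $K$ has as few segments as possible.

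Because $u$ and $v$ lie in different components of $L\cap\cR$, some sub-arc $\sigma$ of the segment $uv$ of $L$ has both endpoints $p,q$ on $K$ and its relative interior in the exterior of $K$ (take the first such sub-arc when travelling along the segment from $u$ to $v$). Then $\sigma$ meets $K$ only at $p$ and $q$, so on the sphere $\sigma$ cuts the disk ``outside'' $K$ into two disks; let $D$ be the bounded one, so that the closure of $D$ lies in the exterior of $K$ together with $K$, and $\partial D=\sigma\cup B$ for one of the two arcs $B$ of $K$ between $p$ and $q$. Since $\sigma$ is a segment of $L\in\cL$ and $B$ is a concatenation of consecutive segments of $K$, the Jordan curve $\partial D$ is a pseudo-polygon on $\cL$, so by Lemma~\ref{lem:convex} it has at least three convex articulation points. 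An articulation point $r$ of $\partial D$ in the relative interior of $B$ is also an articulation point of $K$, and near $r$ the interior of $D$ lies on the exterior side of $K$; hence $r$ is convex for $D$ if and only if it is concave for $K$. By hypothesis the only possible concave articulation points of $K$ are $u$ and $v$, so all three-or-more convex articulation points of $\partial D$ lie in $\{p,q\}\cup(\{u,v\}\cap\int(B))$; in particular one of $u,v$---say $u$---is a concave articulation point of $K$ with $u\in\int(B)$. Moreover $u\in\int(B)$ forces $u\ne p$, so the segment $uv$ of $L$ leaves $u$ into $\cR$ rather than into the exterior.

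The reduction step is to ``cut off the reflex corner at $u$''. Let $L_1,L_2$ be the pseudolines carrying the two segments of $K$ at $u$, with $K$-neighbours $u_1\in L_1$ and $u_2\in L_2$; since $u$ is concave, each $L_i$ runs from $u$ into the interior of $K$, so following $L_i$ from $u$ one reaches a first point $a_i\ne u$ of $K$, the open segment $ua_i$ of $L_i$ lying in the interior of $K$. For a suitable choice of $i\in\{1,2\}$, keep the arc $K^{*}$ of $K$ from $u$ to $a_i$ through $u_i$ (chosen so that $v\in K^{*}$) and replace the complementary arc of $K$ by the segment $ua_i$ of $L_i$; this produces a pseudo-polygon $K'$ on $\cL$ with the following properties: $u$ and $v$ lie on $K'$; the interior of $K'$ is contained in that of $K$, so the exterior of $K'$ contains the exterior of $K$; at $u$ both segments of $K'$ lie on $L_i$, so $u$ is no longer an articulation point; every articulation point of $K'$ other than possibly $v$ is convex (those coming from $K^{*}$ keep their type since the interior sides agree, and the new corner $a_i$ is convex because beyond $a_i$ both the pseudoline $L_i$ and the pseudoline of $K$ lie in the exterior of $K'$); and $L$ is still the pseudoline through $u$ and $v$. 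As the segment $uv$ of $L$ meets the exterior of $K$ and hence the exterior of $K'$, the tuple $(\cL,K',L,u,v)$ is again a counterexample. Finally, the removed complementary arc of $K$ cannot be a single segment: otherwise its pseudoline would pass through both $u$ and $a_i$, as does $L_i$, so two pseudolines would meet twice; hence it has at least two segments while $K'$ replaces it by one, and $K'$ has strictly fewer segments than $K$, contradicting minimality. (If $K$ had only three segments it would be a pseudo-triangle, all of whose articulation points are convex, and then the conclusion ``one of $u,v$ is concave'' is already a contradiction; so the minimal counterexample does have enough segments for the reduction.)

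The main obstacle is the corner-cutting step: one has to verify that at least one of the pseudolines $L_1,L_2$---together with the arc through the corresponding neighbour of $u$---can be used so that $v$ stays on the kept arc \emph{and} $u$ becomes a non-articulation point, while the convexity hypothesis survives and the number of segments strictly drops; in the stubborn case where neither choice places $v$ correctly one falls back on cutting along $L$ itself towards $v$ and controlling an appropriate complexity measure. Making these choices simultaneously compatible, and disposing of the degenerate incidences encountered on the way---for instance $L$ coinciding with $L_1$ or $L_2$, or $v$ being a $K$-neighbour of $u$, each ruled out because two pseudolines of an arrangement cross exactly once---is where the real work lies.
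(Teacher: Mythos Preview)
Your overall strategy is the paper's: use Lemma~\ref{lem:convex} on the pseudo-polygon bounded by a sub-arc $\sigma$ of the segment $uv$ in the exterior of $K$ together with an arc $B$ of $K$, conclude that one of $u,v$ must be a concave articulation point of $K$, and then cut that corner off. The first part of your argument (forming $\partial D=\sigma\cup B$ and counting convex articulation points) is correct and is exactly the paper's base case.

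The gap is in your reduction step. You insist on keeping the arc through the $K$-neighbour $u_i$ lying on the same pseudoline $L_i$ as the cut, so that $u$ ceases to be an articulation point at all. As you yourself flag, this may be incompatible with keeping $v$ on the surviving arc for both $i=1$ and $i=2$---and that ``stubborn case'' genuinely can occur (e.g.\ when the cyclic order along $K$ is $u,u_1,a_1,v,a_2,u_2$). Your fallback of ``cutting along $L$ itself and controlling an appropriate complexity measure'' is not spelled out and does not obviously terminate.

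The fix is much simpler than your workaround, and it is what the paper does: drop the requirement that $u$ become a non-articulation point. Follow \emph{either} pseudoline $L'$ at $u$ into the interior of $K$ until it hits $K$ at a point $x$, and form $K'$ from the segment $ux$ together with whichever arc of $K$ between $u$ and $x$ contains $v$. Then $u$ may still be an articulation point of $K'$, but it is \emph{convex}: extending $L'$ past $u$ from the new segment $ux$ heads towards the neighbour $u_i$ on $L'$, which lies on the discarded arc and hence in the exterior of $K'$. Since the interior of $K'$ is contained in the interior of $K$, the segment $uv$ still meets the exterior of $K'$, so you have reduced the number of concave points among $\{u,v\}$ by one. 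After at most two such cuts you are in the base case, and no minimal-counterexample bookkeeping or ``stubborn case'' is needed.
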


\begin{proof}
First, suppose that neither $u$ nor $v$ is a concave articulation point of $K$.
Suppose for the sake of contradiction that the segment $uv$ of $L$ is not disjoint from the exterior of $K$, and let $xy$ be a maximal subsegment of it with internal part contained in the exterior of $K$.
Thus $x,y\in K$.
The segment $xy$ of $L$ together with one of the parts of $K$ between $x$ and $y$ forms a pseudo-polygon on $\cL$ with interior contained in the exterior of $K$ and with at most two convex articulation points: $x$ and $y$.
This contradicts Lemma~\ref{lem:convex}.

Now, suppose that $u$ is a concave articulation point of $K$ while $v$ is not.
Let $L'$ be a pseudoline containing one of the two segments of $K$ incident to $u$.
Follow $L'$ from $u$ in the other direction (towards the interior of $K$) until it hits $K$ at some point $x$.
Let $K'$ be a pseudo-polygon formed by the segment $ux$ of $L'$ and the part of $K$ between $x$ and $u$ that contains the point $v$.
Thus $x$ is a convex articulation point of $K'$, $u$ is no longer a concave articulation point of $K'$, and every other articulation point of $K$ that lies on $K'$ remains convex on $K'$.
Therefore, as we shown in the first case, the segment $uv$ of $L$ is disjoint from the exterior of $K'$, so it is disjoint from the exterior of $K$.

The argument is analogous if $v$ is a concave articulation point of $K$, except that when $u$ is also a concave articulation point of $K$, then we apply the same argument as above to reduce to the case that only one of $u,v$ is a concave articulation point of $K$.
\end{proof}

\begin{proof}[Proof of Proposition \ref{prop:H-free}]
Let $G=G_\cL(K,V)$ be a curve pseudo-visibility graph and $\prec$ be a natural order of $V$ on $K$.
By Proposition~\ref{prop:general-position}, we can assume that $(\cL,V)$ is in general position.
For an edge $e=uv\in E(G)$, let $\ell_e$ denote the open segment $uv$ of the pseudoline in $\cL$ passing through $u$ and $v$.
Suppose that there are $u,v\in V$ with $u\prec v$ such that there are crossing sequences $e_1,\ldots,e_k$ from $u$ to $v$ and $f_1,\ldots,f_t$ from $v$ to $u$.
Choose the two crossing sequences so that $k+t$ is minimum.
We need to show that $uv$ is an edge of $G$.

By minimality and Lemmas \ref{lem:rep} and~\ref{lem:almostTran}, for $1\leq i<j\leq k$, the segments $\ell_{e_i}$ and $\ell_{e_j}$ intersect if and only if $j=i+1$, and likewise for the crossing sequence $f_1,\ldots,f_t$.
Also by Lemma~\ref{lem:rep}, each $\ell_{e_i}$ is disjoint from each $\ell_{f_j}$.
Therefore, by beginning at $u$ and walking along $\ell_{e_1}$ until its unique intersection with $\ell_{e_2}$ is reached, then turning left and walking along $\ell_{e_2}$ until either $v$ or its unique intersection with $\ell_{e_3}$ is reached, and so on, we can find an open curve $K_1\subseteq\bigcup_{i=1}^k\ell_{e_i}$ with ends $u$ and $v$.
Likewise we can find an open curve $K_2\subseteq\bigcup_{j=1}^t\ell_{f_j}$ with ends $v$ and $u$.
Let $K=K_1\cup K_2\cup\{u,v\}$.
It follows that $K$ is a pseudo-polygon on $\cL$ and all articulation points of $K$ except possibly $u$ and $v$ are convex.
Therefore, by Lemma~\ref{lem:forcing}, the segment $\ell_{uv}$ is disjoint from the exterior of $K$, so $uv\in E(G)$.
\end{proof}

\begin{proof}[Proof of Proposition \ref{prop:ordered-hole-free}]
When $K$ is a Jordan curve and $p,q\in K$, we write $K[p,q]$ and $K(p,q)$ for the closed and the open segment (respectively) of $K$ from $p$ to $q$ in the counterclockwise direction.
When $L$ is a pseudoline and $p,q\in L$, we write $L[p,q]$ for the closed segment of $L$ connecting $p$ and $q$.

Let $G=G_\cL(K,V)$ be a curve pseudo-visibility graph.
By Proposition~\ref{prop:general-position}, we can assume without loss of generality that $(\cL,V)$ is in general position.
Suppose for the sake of contradiction that $G$ has an induced cycle of length at least four on vertices $p_1,\ldots,p_k$ with $p_1\prec\cdots\prec p_k$.
We write indices cyclically, so that $p_{k+1}=p_1$, $p_{k+2}=p_2$, and so on.
Let $K$ be pseudo-polygon on $\cL$ formed by the pseudoline segments $p_ip_{i+1}$ with $1\leq i\leq k$.
It follows from Lemma~\ref{lem:rep} that $K$ is a pseudo-polygon with articulation points $p_1,\ldots,p_k$ in cyclic order (and no other articulation points).
By Lemma~\ref{lem:convex}, $K$ has a convex articulation point.
Up to rotation, we can assume that $p_2$ is convex.

By Lemma~\ref{lem:forcing}, since $p_1p_3$ is not an edge of $G$ and $p_2$ is convex, there is a concave articulation point of $K$ among $p_4,\ldots,p_k$.
We now choose a new cycle $K'$ with similar properties, but which is also ``minimal''.
We will then use the existence of a concave articulation point of $K'$ to reach a contradiction to minimality.
For $1\leq i\leq k$, let $L_i$ be the pseudoline in $\cL$ passing through $p_i$ and $p_{i+1}$.
Choose a pseudo-polygon $K'$ on $\{L_i\}_{1\leq i\leq k}$ so that
\begin{enumeratei}
\item $K[p_1,p_3]\subseteq K'$ and $K'$ is disjoint from the exterior of $K$,
\item every concave articulation point of $K'$ belongs to $\{p_1,\ldots,p_k\}$, and
\item subject to conditions (i) and (ii), the region of the plane bounded by $K'$ is minimal.
\end{enumeratei}
Such a cycle $K'$ exists, as $K$ is a candidate.

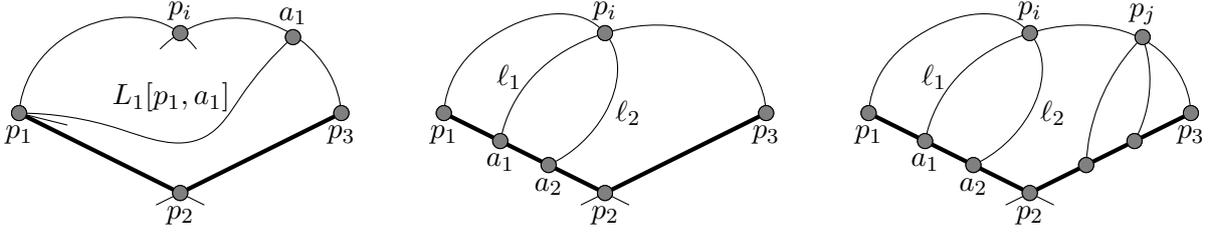
\begin{figure}[t]
\centering
\begin{tikzpicture}[scale=.53, every node/.style={inner sep=2, outer sep=0}, vtx/.style={draw, circle, fill=gray}]
  \node[vtx, label=below:$p_1$] (p1) at (-4,0) {};
  \node[vtx, label=below:$p_2$] (p2) at (0,-2) {};
  \node[vtx, label=below:$p_3$] (p3) at (4,0) {};
  \node (a) at (3.7,1) {};
  \node[vtx, label=above:$p_i$] (pi) at (0,2) {};
  \node (c0) at (-3.7,1) {};
  \node (c1) at (-2.4,2.15) {};
  \draw[ultra thick] (p1) -- (p2) -- (p3);
  \draw (-.6,-2.3) -- (p2) -- (.6,-2.3);
  \node (b) at (-.5,1.6) {};
  \node (b') at (.4,1.6) {};
  \draw[use Hobby shortcut] (p3.center) .. (a.center) .. (pi.center) .. (b.center);
  \draw[use Hobby shortcut] (p1.center) .. (c0.center) .. (c1.center) .. (pi.center) .. (b'.center);
  \node (c) at (-1.5,-.4) {};
  \node (c') at (0.6,-.6) {};
  \node[label=left:{$L_1[p_1,a_1]$}] (c'') at (1.5,.4) {};
  \node[vtx, label=above:$a_1$] (a1) at (2.8,1.9) {};
  \draw[use Hobby shortcut] (p1.center) .. (c.center) .. (c'.center) .. (c''.center) .. (a1.center);
  \draw (p1) -- (-2.8,-.3);
  \node[vtx] at (p1) {};
  \node[vtx] at (p2) {};
  \node[vtx] at (p3) {};
  \node[vtx] at (pi) {};
  \node[vtx] at (a1) {};
\end{tikzpicture}\hfill
\begin{tikzpicture}[scale=.53, every node/.style={inner sep=2, outer sep=0}, vtx/.style={draw, circle, fill=gray}]
  \node[vtx, label=below:$p_1$] (p1) at (-4,0) {};
  \node[vtx, label=below:$p_2$] (p2) at (0,-2) {};
  \node[vtx, label=below:$p_3$] (p3) at (4,0) {};
  \node (a) at (3.7,1) {};
  \node[label=left:$\ell_1$] (b2) at (-1.8,.9) {};
  \node[vtx, label=below:$a_1$] (a1) at (-2.6,-.7) {};
  \node[vtx, label=above:$p_i$] (pi) at (0,2) {};
  \node (c0) at (-3.7,1) {};
  \node (c1) at (-2.4,2.15) {};
  \node[label=right:$\ell_2$] (c2) at (0,0) {};
  \node[vtx, label=below:$a_2$] (a2) at (-1.4,-1.3) {};
  \draw[ultra thick] (p1) -- (p2) -- (p3);
  \draw (-.6,-2.3) -- (p2) -- (.6,-2.3);
  \draw[use Hobby shortcut] (p3.center) .. (a.center) .. (pi.center) .. (b2.center) .. (a1.center);
  \draw[use Hobby shortcut] (p1.center) .. (c0.center) .. (c1.center) .. (pi.center) .. (c2.center) .. (a2.center);
  \node[vtx] at (p1) {};
  \node[vtx] at (p2) {};
  \node[vtx] at (p3) {};
  \node[vtx] at (pi) {};
  \node[vtx] at (a1) {};
  \node[vtx] at (a2) {};
\end{tikzpicture}\hfill
\begin{tikzpicture}[scale=.53, every node/.style={inner sep=2, outer sep=0}, vtx/.style={draw, circle, fill=gray}]
  \node[vtx, label=below:$p_1$] (p1) at (-4,0) {};
  \node[vtx, label=below:$p_2$] (p2) at (0,-2) {};
  \node[vtx, label=below:$p_3$] (p3) at (4,0) {};
  \node (a) at (3.7,1) {};
  \node[label=left:$\ell_1$] (b2) at (-1.8,.9) {};
  \node[vtx, label=below:$a_1$] (a1) at (-2.6,-.7) {};
  \node[vtx, label=above:$p_i$] (pi) at (0,2) {};
  \node (c0) at (-3.7,1) {};
  \node (c1) at (-2.4,2.15) {};
  \node[label=right:$\ell_2$] (c2) at (0,0) {};
  \node[vtx, label=below:$a_2$] (a2) at (-1.4,-1.3) {};
  \draw[ultra thick] (p1) -- (p2) -- (p3);
  \draw (-.6,-2.3) -- (p2) -- (.6,-2.3);
  \draw[use Hobby shortcut] (p3.center) .. (a.center) .. (pi.center) .. (b2.center) .. (a1.center);
  \draw[use Hobby shortcut] (p1.center) .. (c0.center) .. (c1.center) .. (pi.center) .. (c2.center) .. (a2.center);
  \node[vtx, label=above:$p_j$] (pj) at (2.8,1.9) {};
  \node[vtx] (d1) at (2.6,-.7) {};
  \node[vtx] (d2) at (1.4,-1.3) {};
  \draw[use Hobby shortcut] (d2.center) .. (1.5,-.5) .. (pj.center);
  \draw[use Hobby shortcut] (d1.center) .. (2.9,.1) .. (pj.center);
  \node[vtx] at (p1) {};
  \node[vtx] at (p2) {};
  \node[vtx] at (p3) {};
  \node[vtx] at (pi) {};
  \node[vtx] at (a1) {};
  \node[vtx] at (a2) {};
  \node[vtx] at (pj) {};
  \node[vtx] at (d1) {};
  \node[vtx] at (d2) {};
\end{tikzpicture}
\caption{Illustrations for the proof of Proposition~\ref{prop:ordered-hole-free}, with $K'[p_1,p_3]$ in bold.}
\label{fig:orderedHole}
\end{figure}

With this choice, we still have that $K'(p_3,p_1)$ contains a concave articulation point of $K'$, as otherwise, by Lemma~\ref{lem:forcing}, the pseudoline segment $p_1p_3$ in $\cL$ would be disjoint from the exterior of $K'$ and thus from the exterior of $K$, contradicting the assumption that $p_1p_3$ is not an edge of $G$.
By (ii), there is an index $i$ with $4\leq i\leq k$ such that $p_i$ is a concave articulation point of $K'$.
Moreover, we have the following.

\begin{claim}
\label{claim:concave}
Neither $p_1$ nor $p_3$ is a concave articulation point of $K'$.
\end{claim}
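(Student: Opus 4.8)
The plan is to argue by contradiction, supposing that $p_1$ (the case of $p_3$ being symmetric) is a concave articulation point of $K'$, and then to locally modify $K'$ into a smaller pseudo-polygon still satisfying conditions (i) and (ii), contradicting the minimality condition (iii). The key geometric input will be Lemma~\ref{lem:forcing}: if $p_1$ is concave on $K'$, then one of the two segments of $K'$ incident to $p_1$ lies on a pseudoline $\ell$ that extends into the interior of $K'$ at $p_1$. Since $p_1$ is an endpoint of $K'[p_1,p_3]$, exactly one incident segment belongs to $K'[p_1,p_3]$ and the other to $K'(p_3,p_1)$; I would take $\ell$ to be the pseudoline carrying the segment of $K'[p_1,p_3]$ incident to $p_1$, i.e. $\ell=L_1$ or whichever $L_i$ carries that first segment of $K'[p_1,p_3]$ (the point $a_1$ in Figure~\ref{fig:orderedHole} is exactly the first intersection of this pseudoline with $K'$ on the interior side). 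Following $\ell$ from $p_1$ into the interior of $K'$ until it first meets $K'$ again gives a point $a_1\in K'$.

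First I would set up the two candidate replacement curves. One candidate is obtained by replacing the portion $K'[p_1,a_1]$ by the pseudoline segment $\ell[p_1,a_1]$; the other is obtained by replacing the portion $K'[a_1,p_1]$ (going the other way around) by $\ell[a_1,p_1]$. Both of these are pseudo-polygons on $\{L_i\}$ whose bounded regions are strictly contained in that of $K'$ (since $\ell[p_1,a_1]$ is internally in the interior of $K'$), so by (iii) each must violate (i) or (ii). The subtlety is that exactly one of these two candidates contains $K'[p_1,p_3]$: the segment of $K'[p_1,p_3]$ at $p_1$ lies on $\ell$, so cutting along $\ell$ at $p_1$ either leaves all of $K'[p_1,p_3]$ on the same side (if $a_1$ lies on $K'(p_3,p_1)$) or splits it (if $a_1\in K'(p_1,p_3)$); I would handle these two positions of $a_1$ separately. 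In the first position, the candidate containing $K'[p_1,p_3]$ satisfies (i) automatically — it is disjoint from the exterior of $K'$, hence of $K$, by construction — so it must fail (ii), meaning it acquires a concave articulation point outside $\{p_1,\dots,p_k\}$. The only new articulation points are $a_1$ and possibly $p_1$; but $p_1$ is now convex or non-articulation on the new curve (its $\ell$-side straightened out), and $a_1$: here is the crux — I need $a_1$ to be \emph{convex} on the new curve, which holds precisely because $\ell$ extended into the \emph{interior} of $K'$ at $p_1$, so it reaches $K'$ from the interior side at $a_1$, making $a_1$ convex there (the same mechanism as in the proof of Lemma~\ref{lem:forcing}, second case). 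That contradicts the failure of (ii). In the second position ($a_1\in K'(p_1,p_3)$), I instead use the other candidate, containing $K'[a_1,p_3]$ plus $\ell[a_1,p_1]$ plus $K'[p_3,p_1]$ — wait, this one does not obviously contain all of $K'[p_1,p_3]$, so here I would instead argue that $a_1\in K'(p_1,p_3)$ is itself impossible: the subpath $K'[p_1,a_1]\subseteq K'[p_1,p_3]$ together with $\ell[p_1,a_1]$ would bound a region in the interior of $K'$ all of whose articulation points except $a_1$ are convex (they are convex on $K'$ and inherit convexity, and $p_1$ is straightened), so by Lemma~\ref{lem:convex} this is impossible unless $K'[p_1,a_1]=\ell[p_1,a_1]$, i.e. $a_1$ is the second vertex $p_2$ — but then $\ell=L_1$ and $p_1,p_2,a_1=p_2$ forces a degeneracy ruled out by general position.

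The main obstacle I anticipate is the bookkeeping of \emph{which} side of the cut retains $K'[p_1,p_3]$ and \emph{why} the newly created articulation point $a_1$ (and the straightened point $p_1$) are convex rather than concave — this is where one must use, carefully and in the right direction, that $p_1$ being concave means the relevant pseudoline exits $p_1$ \emph{into} the interior of $K'$, so it hits $K'$ again from the inside at $a_1$, forcing $a_1$ convex. A secondary nuisance is confirming that no articulation point of $K'$ other than $p_1$ and $a_1$ changes its convexity type under the modification: away from $p_1$ and $a_1$ the curve is unchanged, so this is immediate, but it needs to be stated. Once these local convexity facts are pinned down, the contradiction with minimality (iii) — or with Lemma~\ref{lem:convex} in the degenerate placement of $a_1$ — closes the argument, and the case of $p_3$ is verbatim the same with the roles of the two ends of $K'[p_1,p_3]$ swapped.
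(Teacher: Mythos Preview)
Your approach is the same as the paper's: extend $L_1$ from $p_1$ into the interior of $K'$ (in the direction opposite to $p_2$) until it first meets $K'$ at a point $a_1$, and observe that the pseudo-polygon $K'[p_1,a_1]\cup L_1[p_1,a_1]$ still satisfies (i) and (ii) while bounding a strictly smaller region, contradicting (iii). Your verification that $p_1$ ceases to be an articulation point and that $a_1$ is convex on the new curve is correct and more explicit than the paper, which simply asserts the contradiction.

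Where you overcomplicate is the case split on the location of $a_1$. The ``second position'' $a_1\in K'(p_1,p_3)$ cannot occur, and for a simpler reason than Lemma~\ref{lem:convex}: recall that $K'[p_1,p_3]=K[p_1,p_3]=L_1[p_1,p_2]\cup L_2[p_2,p_3]$. Walking along $L_1$ from $p_1$ away from $p_2$ you certainly do not meet $L_1[p_1,p_2]$, and you can meet $L_2[p_2,p_3]$ only where $L_1$ and $L_2$ cross, namely at $p_2$ --- which lies in the other direction. Hence $a_1\in K'(p_3,p_1)$ automatically, and your ``first position'' argument (equivalently, the paper's one-line proof) suffices. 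There is no need for two candidate curves or for invoking Lemma~\ref{lem:convex} in a degenerate sub-case.
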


\begin{proof}
Suppose that $p_1$ is a concave articulation point of $K'$.
Walk along the pseudoline $L_1$ from $p_1$ in the direction opposite to $p_2$ (towards the interior of $K'$) until hitting $K'$ at a point $a_1\in L_1\cap K'$.
Then the pseudo-polygon $K'[p_1,a_1]\cup L_1[p_1,a_1]$ contradicts the choice of $K'$; see Figure~\ref{fig:orderedHole} (top left).
\end{proof}

Recall that $p_i$ is a concave articulation point of $K'$ with $4\leq i\leq k$.
By (i) and (ii), there are open segments $\ell_1$ of $L_{i-1}$ and $\ell_2$ of $L_i$ that are both contained in the interior of $K'$, have $p_i$ as one endpoint and have the other endpoint on $K'$; see Figure~\ref{fig:orderedHole} (top right).
Let $a_1$ and $a_2$ be the other endpoints of $\ell_1$ and $\ell_2$, respectively.
We now show that, up to symmetry, the case depicted in Figure~\ref{fig:orderedHole} (top right) actually occurs.

\begin{claim}
Either $a_1,a_2\in K'(p_1,p_2)$ or $a_1,a_2\in K'(p_2,p_3)$.
\end{claim}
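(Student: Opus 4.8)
The plan is to rule out, one kind at a time, every placement of $a_1$ or $a_2$ that would violate the claim, namely any placement on the arc $K'[p_3,p_1]$ and then the ``split'' placement with one endpoint in $K'(p_1,p_2)$ and the other in $K'(p_2,p_3)$. First I would dispose of some easy cases. Since $\ell_1\subseteq L_{i-1}$ and $L_{i-1}$ already contains the two points $p_{i-1},p_i$ of $V$, general position forces $a_1\notin V$ unless $a_1\in\{p_{i-1},p_i\}$; as $4\le i\le k$ this gives $a_1\ne p_2$, and symmetrically (using $\ell_2\subseteq L_i$) $a_2\ne p_2$. Because $K[p_1,p_3]\subseteq K'$, the arc $K'(p_1,p_2)$ is contained in a single segment of $K'$ lying on $L_1$ and $K'(p_2,p_3)$ in one lying on $L_2$; in particular no point of $K'(p_1,p_2)\cup K'(p_2,p_3)$ is an articulation point of $K'$. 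Also $p_i\in K'(p_3,p_1)$, being an articulation point of $K'$ distinct from $p_1,p_2,p_3$. As $a_1,a_2\ne p_2$, each of $a_1,a_2$ now lies in exactly one of $K'[p_3,p_1]$, $K'(p_1,p_2)$, $K'(p_2,p_3)$, and it remains to exclude (a)~$a_j\in K'[p_3,p_1]$ and (b)~the split placement.

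For~(a), suppose $a_1\in K'[p_3,p_1]$ (the case $a_2\in K'[p_3,p_1]$ is the same). Then $\ell_1$ is a chord of the Jordan curve $K'$ joining the two points $a_1$ and $p_i$ of $K'[p_3,p_1]$, with interior in the interior of $K'$; it splits the region bounded by $K'$ into two strictly smaller regions, and the boundary of exactly one of them, say $K''$, contains $p_2$. Since $a_1$ and $p_i$ lie on $K'[p_3,p_1]$ while $p_2$ does not, the arc of $K'$ used by $K''$ contains all of $K[p_1,p_3]$; so $K''$ is a pseudo-polygon on $\{L_1,\dots,L_k\}$ that contains $K[p_1,p_3]$, lies in the closed region bounded by $K$ (hence is disjoint from the exterior of $K$), and bounds a strictly smaller region than $K'$. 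To contradict the minimality condition~(iii) it then suffices to check~(ii): the articulation points of $K''$ that are not articulation points of $K'$ lie in $\{p_i,a_1\}$; here $p_i\in\{p_1,\dots,p_k\}$, and if $a_1\notin\{p_1,\dots,p_k\}$ then by~(ii) for $K'$ the point $a_1$ is not concave on $K'$, so near $a_1$ the interior of $K'$ is a half-plane or a wedge of angle at most $180^\circ$; since $\ell_1$ meets $K'$ at $a_1$ from inside, the wedge of $K''$ at $a_1$ lies in this region with one side on its boundary and so has angle $<180^\circ$, i.e.\ $a_1$ is not a concave articulation point of $K''$. This contradicts the choice of $K'$.

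For~(b), suppose $a_1,a_2$ lie in different ones of $K'(p_1,p_2),K'(p_2,p_3)$. Let $\beta$ be the arc of $K'$ between $a_1$ and $a_2$ that passes through $p_2$; as $a_1$ and $a_2$ lie on opposite sides of $p_2$ within $K[p_1,p_3]$, the arc $\beta$ avoids $p_1$, $p_3$ and $p_i$. Let $\gamma$ be the arc from $a_1$ through $p_i$ to $a_2$ formed by concatenating $\ell_1$ and $\ell_2$; since $L_{i-1}\cap L_i=\{p_i\}$, $\gamma$ is simple and its interior meets $K'$ only at $p_i$, so $\gamma\cup\beta$ is a Jordan curve. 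Let $D$ be the region it bounds; then $\overline D$ is contained in the closed region bounded by $K'$. The curve $\gamma\cup\beta$ is a pseudo-polygon on $\cL$ whose articulation points are exactly $a_1$, $p_i$, $a_2$, $p_2$ (at $p_i$ the incident pseudolines are $L_{i-1},L_i$; at $a_1$ one of them is $L_{i-1}$ and at $a_2$ one of them is $L_i$, the others being $L_1,L_2$). Since $a_1,a_2$ are not articulation points of $K'$, near each of them the interior of $K'$ is a half-plane; as $\ell_1$ reaches $a_1$ and $\ell_2$ reaches $a_2$ from inside $K'$, the wedge of $D$ at $a_1$ resp.\ $a_2$ lies in that half-plane with one side on its boundary, so its angle is $<180^\circ$; thus $a_1$ and $a_2$ are convex on $\gamma\cup\beta$. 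Let $M\in\cL$ be the pseudoline through $p_2$ and $p_i$, which exists as $p_2,p_i\in V$. Applying Lemma~\ref{lem:forcing} to the pseudo-polygon $\gamma\cup\beta$ with $u=p_2$ and $v=p_i$ (all other articulation points being convex), the segment of $M$ from $p_2$ to $p_i$ is disjoint from the exterior of $D$, hence — since $\overline D$ lies in the closed region bounded by $K$ — from the exterior of $K$, so $p_2p_i\in E(G)$. But $2$ and $i$ are not cyclically consecutive (as $4\le i\le k$), so $p_2p_i$ is a non-edge of the induced cycle $p_1,\dots,p_k$, a contradiction. Hence $a_1$ and $a_2$ lie in the same one of $K'(p_1,p_2),K'(p_2,p_3)$, which is the claim.

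I expect the main work to be the topological bookkeeping in the surgeries of~(a) and~(b): specifying exactly which arc of $K'$ is retained, confirming that $K''$ and $\gamma\cup\beta$ are honest (simple) pseudo-polygons lying in the correct region, and — the most delicate point — controlling the convexity of the newly created articulation points $a_1$ and $a_2$, which is precisely the hypothesis needed to invoke Lemma~\ref{lem:forcing}. The index arithmetic with $4\le i\le k$ and the citations of the earlier lemmas are routine by comparison.
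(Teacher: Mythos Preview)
Your proof is correct and follows essentially the same approach as the paper: for (a) you cut $K'$ along $\ell_j$ and invoke the minimality of $K'$, and for (b) you form the pseudo-polygon $\ell_1\cup\ell_2\cup\beta$ and apply Lemma~\ref{lem:forcing} to force $p_2p_i\in E(G)$. The differences are only in presentation: you treat the whole arc $K'[p_3,p_1]$ at once rather than splitting at $p_i$, you handle both orientations of the ``split'' case simultaneously, and you invoke the exception clause of Lemma~\ref{lem:forcing} at $p_2$ and $p_i$ whereas the paper (implicitly) observes that all four articulation points of $\ell_1\cup\ell_2\cup\beta$ are in fact convex.
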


\begin{proof}
We have $a_1\notin K'[p_i,p_1]$, otherwise the pseudo-polygon $K'[a_1,p_i]\cup\ell_1$ would contradict the choice of $K'$.
Furthermore, $a_1\neq p_2$, because $a_1\in L_{i-1}$ and $L_{i-1}$ does not pass through $p_2$ by the general position assumption.
By a symmetric argument for $a_2$, we obtain that $a_1,a_2\in K'(p_1,p_3)\setminus\{p_2\}$.
Finally, it is not possible that $a_1\in K'(p_1,p_2)$ and $a_2\in K'(p_2,p_3)$, because then $K'[a_1,a_2]\cup\ell_1\cup\ell_2$ would be a pseudo-polygon with no concave articulation points, so the pseudoline segment $p_2p_i$ in $\cL$ would be disjoint from the exterior of it (and thus from the exterior of $K$) by Lemma~\ref{lem:forcing}, contradicting the assumption that $p_2p_i$ is not an edge of $G$.
\end{proof}

So, up to symmetry, we can assume that $a_1,a_2\in K'(p_1,p_2)$.
After possibly changing $p_i$, we can assume that there is no concave articulation point of $K'$ on $K'(p_3,p_i)$ with the same property.
By Lemma~\ref{lem:forcing}, since $p_2p_i$ is not an edge of $G$, the pseudo-polygon $K'[a_2,p_i]\cup\ell_2$ has a concave articulation point $p_j$ with $3\leq j<i$.
By Claim~\ref{claim:concave}, $p_3$ is not a concave articulation point of $K'$ so $4\leq j<i$.
We can assume that $j$ is maximal with that property, so that all articulation points of $K'$ on $K'(p_j,p_i)$ are convex.
Repeating the argument from the last claim and by the choice of $p_i$, there are two segments of $L_{j-1}$ and $L_j$ which have $p_j$ as one endpoint and have the other endpoint on $K(p_2,p_3)$; see Figure~\ref{fig:orderedHole} (bottom).
But then, there is a pseudo-polygon on $\cL$ with no concave articulation points that contains $p_2$ and $p_i$ (and $p_j$), and therefore, by Lemma~\ref{lem:forcing}, $p_2p_i$ is an edge of $G$.
This contradiction completes the proof.
\end{proof}

\section{Partitioning into capped graphs}
\label{sec:partition}

Recall that an ordered graph $(G,{\prec})$ is \emph{capped} if the following holds for any four vertices $a,b,c,d$ with $a\prec b\prec c\prec d$: if $ac\in E(G)$ and $bd\in E(G)$, then $ad\in E(G)$.
In contrast to previous notions defined in terms of $\prec$, this one is \emph{not} invariant under rotation of $\prec$.

\begin{lemma}
\label{lem:H-free-clique}
If\/ $u\prec v$ are two adjacent vertices of an\/ $\cH$-free ordered graph\/ $(G,{\prec})$ and\/ $X=\{u,v\}\cup\{x\in V(G)\colon u\prec x\prec v$ and\/ $ux,xv\in E(G)\}$, then\/ $(G[X],{\prec}|_X)$ is a capped graph.
\end{lemma}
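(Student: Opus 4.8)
The plan is to argue by contradiction. Suppose $(G[X],{\prec}|_X)$ is not capped, so there are vertices $a\prec b\prec c\prec d$ in $X$ with $ac,bd\in E(G)$ but $ad\notin E(G)$. I will show this forces $(G,\prec)\in\cH$: the vertices $a$ and $d$ are non-adjacent, and I will produce a crossing sequence from $a$ to $d$ and one from $d$ to $a$, contradicting that $(G,\prec)$ is $\cH$-free.

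One direction is free: since $a\prec b\prec c\prec d$ and $ac,bd\in E(G)$, the edge $ac$ crosses the edge $bd$, and as $a$ is the smaller end of $ac$ and $d$ the larger end of $bd$, the pair $(ac,bd)$ is a crossing sequence from $a$ to $d$. The real work is a crossing sequence from $d$ to $a$, which cyclically runs the long way, through $v$ and then (after wrapping around) through $u$. Here I would use that every vertex of $X$ lies in the $\prec$-interval from $u$ to $v$, so $u\preceq a\prec d\preceq v$, and every vertex of $X$ strictly between $u$ and $v$ is adjacent to both of them. A key preliminary observation is that $ad\notin E(G)$ actually forces $a\ne u$ and $d\ne v$: if $a=u$, then either $d=v$ and so $ad=uv\in E(G)$ because $u$ and $v$ are adjacent, or $u\prec d\prec v$ and so $ud=ad\in E(G)$ by the definition of $X$ --- both impossible; the argument for $d\ne v$ is symmetric. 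Hence $u\prec a\prec d\prec v$, and the definition of $X$ supplies the edges $du$ and $va$.

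With those two edges in hand I would pass to the rotation $\prec'$ of $\prec$ in which $d$ is least. Then $d,v,u,a$ occur in this order in $\prec'$ --- $v$ is a $\prec$-successor of $d$ while $u\prec a$ are $\prec$-predecessors of $d$ --- so $d\prec' v\prec' u\prec' a$; consequently $du$ crosses $va$ in $(G,\prec')$, and $(du,va)$ is a crossing sequence from $d$ to $a$ in $(G,\prec')$. Since $\prec'$ is a rotation of $\prec$ with $d\prec' a$ whereas $a\prec d$ in $\prec$, this is by definition a crossing sequence from $d$ to $a$ in $(G,\prec)$. Together with the crossing sequence from $a$ to $d$ and the non-adjacency of $a$ and $d$, this places $(G,\prec)$ in $\cH$, the desired contradiction. (One could instead build the crossing sequence from $d$ to $a$ out of $du$, $uv$, $va$ using Lemma~\ref{lem:almostTran}, but the two-edge version is shorter and only needs $uv$ to be an edge in the boundary analysis.)

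I expect the only delicate point to be the bookkeeping: correctly locating $u$ and $v$ among $a,b,c,d$ in the rotated order, and noticing that the boundary cases $a=u$ and $d=v$ cannot occur --- this is precisely where the hypothesis that $u$ and $v$ are adjacent is used. The rest (checking the two crossing patterns and that the four vertices involved are pairwise distinct) is routine.
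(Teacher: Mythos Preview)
Your argument is correct and follows the same route as the paper's proof: the crossing sequence $ac,bd$ from $a$ to $d$, and the crossing sequence $du,va$ from $d$ to $a$ (after rotating), together with $\cH$-freeness force $ad\in E(G)$. The only cosmetic difference is that the paper phrases it directly rather than by contradiction, absorbing your boundary analysis into the parenthetical ``$da$ if $a=u$ or $d=v$''---in those cases $ad$ is already an edge, so the single edge $da$ serves as the crossing sequence.
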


\begin{proof}
If $a,b,c,d\in X$ and $ac,bd\in E(G)$, then $ac,bd$ is a crossing sequence from $a$ to $d$ and $du,va$ ($da$ if $a=u$ or $d=v$) is a crossing sequence from $d$ to $a$ in $(G,{\prec})$, so $ad\in E(G)$.
\end{proof}

The clique number of an $\cH$-free ordered graph $(G,{\prec})$ is the maximum clique number of the capped graph $(G[X],{\prec}|_X)$ defined in Lemma~\ref{lem:H-free-clique} over all choices of adjacent vertices $u\prec v$.
Thus, computing the clique number of $\cH$-free ordered graphs is reduced to computing the clique number of capped graphs, and the following proposition allows us to conclude Theorem~\ref{thm:H-free} from Theorem~\ref{thm:capped}.

\begin{proposition}
\label{prop:partition}
There is a polynomial-time algorithm that takes in an\/ $\cH$-free ordered graph\/ $(G,{\prec})$ and partitions its set of vertices into three subsets\/ $V_1$, $V_2$, and\/ $V_3$ so that for each\/ $i\in\{1,2,3\}$, the ordered graph\/ $(G[V_i],{\prec}|_{V_i})$ is capped.
\end{proposition}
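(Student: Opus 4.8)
The plan is to reduce the statement to a colouring problem about bad configurations and then exhibit the colouring by a sweep along $\prec$.

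First the reduction. By definition, $(G,{\prec})$ is capped exactly when it contains no \emph{bad quadruple}, i.e.\ no vertices $a\prec b\prec c\prec d$ with $ac,bd\in E(G)$ and $ad\notin E(G)$. Since cappedness passes to induced ordered subgraphs, it suffices to colour $V(G)$ with three colours so that no bad quadruple is monochromatic; the colour classes are then the desired $V_1,V_2,V_3$. The key first observation is that $\cH$-freeness strongly constrains bad quadruples: if $a\prec b\prec c\prec d$ is one, then $ac,bd$ is a crossing sequence from $a$ to $d$, so there \emph{is} a crossing sequence from $a$ to $d$; as $ad\notin E(G)$ and $(G,{\prec})$ is $\cH$-free, there is \emph{no} crossing sequence from $d$ to $a$. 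So for the extreme pair $\{a,d\}$ of any bad quadruple there is no crossing sequence from $d$ to $a$. Dually, this is the only way cappedness can fail: by the argument of Lemma~\ref{lem:H-free-clique} together with Lemma~\ref{lem:almostTran}, any $Y\subseteq V(G)$ such that every pair $a\prec d$ of $Y$ admits a crossing sequence from $d$ to $a$ induces a capped graph, since for $a\prec b\prec c\prec d$ in $Y$ with $ac,bd\in E(G)$ the sequences $ac,bd$ (from $a$ to $d$) and the assumed one (from $d$ to $a$) force $ad\in E(G)$.

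For the colouring itself I would sweep through the vertices in increasing $\prec$-order, maintaining a partial $3$-colouring whose classes induce capped graphs. When a new (largest so far) vertex $w$ arrives, a bad quadruple created by adding $w$ to class $t$ must have $w$ as its top vertex; so I assign $w$ a colour $t$ such that class $t$ contains no triple $a\prec b\prec c$ with $a,b,c\prec w$, $ac\in E(G)$, $bw\in E(G)$, and $aw\notin E(G)$. (Vertices of $G$ that never sit inside such a configuration as $a$, $b$, $c$, or $w$ — in particular low-degree vertices — may be placed in any class, which by the first paragraph is harmless.) Testing, for a given $w$ and $t$, whether class $t$ is ``blocked'' in this sense reduces to checking for the relevant crossing sequences, which is done by the reachability computation in the ``crossing digraph'' of Proposition~\ref{prop:H-free-alg}; so each step, and hence the whole sweep, runs in polynomial time, and with Lemma~\ref{lem:H-free-clique} and Theorem~\ref{thm:capped} this yields the clique number and colouring claimed in Theorem~\ref{thm:H-free}.

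The main obstacle is to prove that at least one of the three colours is always available, i.e.\ that three colours cannot all be simultaneously blocked for an incoming vertex $w$. I expect to argue by contradiction from three blocking triples, one in each class. Each triple $(a_t,b_t,c_t)$ forces a crossing sequence from $a_t$ to $w$ (namely $a_tc_t,\,b_tw$) while, because $a_tw\notin E(G)$ and $(G,{\prec})$ is $\cH$-free, there is none from $w$ back to $a_t$. Choosing the three triples extremally (e.g.\ with $a_t$ as large as possible) so that they genuinely interact, and then combining their crossing sequences via Lemma~\ref{lem:almostTran}, I aim to produce a non-adjacent pair of vertices with crossing sequences in both directions — an $\cH$-obstruction — contradicting $\cH$-freeness. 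Pinning down exactly which extremal choice makes the three triples overlap in the right way is where the real work lies; everything else is bookkeeping built on Lemmas~\ref{lem:almostTran} and~\ref{lem:H-free-clique}.
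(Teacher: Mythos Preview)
Your reduction is clean and correct: colouring $V(G)$ with three colours so that no bad quadruple $a\prec b\prec c\prec d$ (with $ac,bd\in E(G)$, $ad\notin E(G)$) is monochromatic is exactly what is needed, and your observation that in an $\cH$-free graph the extreme pair $(a,d)$ of any bad quadruple has a crossing sequence from $a$ to $d$ but not from $d$ to $a$ is the right structural handle.

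The gap is exactly where you say it is, and it is genuine. You have not proved that the greedy sweep never gets stuck, and your outline for the contradiction is not yet an argument. Having three blocking triples $(a_t,b_t,c_t)$, one per colour, gives three crossing sequences \emph{into} $w$ and the absence of three crossing sequences \emph{out of} $w$; but to manufacture an $\cH$-obstruction you need a single non-adjacent pair with crossing sequences in both cyclic directions, and nothing in your setup forces two of the triples to interact that way. The triples live in different colour classes, so the inductive cappedness of each class gives you no leverage across classes, and Lemma~\ref{lem:almostTran} only combines sequences going the \emph{same} cyclic way. Without a concrete extremal choice and a case analysis showing how two of the $a_t$ (or some derived pair) end up non-adjacent with two-way crossing sequences, this step is open. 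It is also not clear that greedy is even the right paradigm here: the obstruction to colouring $w$ depends on \emph{earlier} colouring choices in a non-local way, so a bad early commitment could in principle block all three colours later without contradicting $\cH$-freeness of the input.

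The paper does something entirely different. It does \emph{not} sweep; it recurses on ``valid segments'' $V[x,y]$ (segments with a crossing sequence from $y$ to $x$), colouring the strongly-reachable interior vertices with one colour and then splitting the remainder into two families $\cL,\cR$ of smaller valid segments (``left-invisible'' and ``right-invisible'' in the polygon intuition) that get the other two colours. Lemma~\ref{lem:induceCapped} shows that strongly-reachable vertices plus the ends always induce a capped graph, and Lemma~\ref{lem:extends} lets the recursive colourings be glued. The argument is substantially longer than a greedy sweep would be, but every step is pinned down; if you want to pursue your approach, the burden is to turn your last paragraph into an actual proof that three blocked colours yield an $\cH$-obstruction.
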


Before delving into the full proof of Proposition~\ref{prop:partition}, we sketch the proof for the case that $(G,{\prec})$ is an ordered polygon visibility graph.
The sketch presents some key intuitions behind the full proof.
It is based on the ``window partition'' by Suri~\cite{Suri86}, which was used in a similar fashion to approximate chromatic variants of the well-known art gallery problem~\cite{bgmtw14, chromGuard19}.

\begin{proof}[Proof sketch for polygons]
We write $pq$ for the closed line segment connecting points $p$ and $q$.
Let $G=G(P,V)$ be a polygon visibility graph, where $P$ is a polygon with vertex set $V$.
Let $\prec$ be a natural ordering of $G$.
Let $x$ and $y$ be the smallest and the largest vertex in $\prec$, respectively, so that $xy$ is an edge of $P$ and of $G$.
Let $P_{xy}=(P\cup\int P)\setminus xy$, where $\int P$ is the interior of $P$.
We construct a partition of $V(G)$ into three sets, which we express in terms of a colouring $\phi$ of $V(G)$ that uses three colours: red, green, and blue.
First we describe a procedure that constructs a partition of $P_{xy}$ into ``windows''; these windows, as we will see, will be naturally arranged with a tree structure, and the root window will be ``based'' at $xy$.

To define the root window, we need to introduce the notion of ``visibility from $xy$''.
We say that a point $p\in P_{xy}$ is \emph{visible from\/ $xy$} if $p$ lies in the closed half-plane to the left of the line from $y$ to $x$ and there is a point $p'\in xy$ such that $pp'\subseteq P_{xy}\cup xy$.
The \emph{window\/ $W_{xy}$ based at\/ $xy$} consists of all points $p\in P_{xy}$ that are visible from $xy$.
It follows that $W_{xy}$ is a connected subset of $P_{xy}$; see Figure~\ref{fig:visibility_components} for an illustration.
This set $W_{xy}$ is the root of the constructed window partition tree.

\begin{figure}[t]
\centering
\begin{tikzpicture}[xscale=0.075, yscale=0.10, >=latex]
  \coordinate (y) at (0,0) {};
  \coordinate (x) at (19,0) {};
  \coordinate (ly) at (-2,-2) {};
  \coordinate (lx) at (21,-2) {};
  \coordinate (p1) at (23,4.9) {};
  \coordinate (lp1) at (22,6.4) {};
  \coordinate (p2) at (28,-4) {};
  \coordinate (p3) at (35,-4) {};
  \coordinate (p3p4) at (38.5,-0.9) {};
  \coordinate (p4) at (45,5) {};
  \coordinate (p5) at (37,14) {};
  \coordinate (p6) at (31,2) {};
  \coordinate (p6p7) at (29,6.2) {};
  \coordinate (lp6p7) at (30,7.5) {};
  \coordinate (p7) at (23,18) {};
  \coordinate (p8) at (46,20) {};
  \coordinate (p8p9) at (40.2,28.2) {};
  \coordinate (p9) at (37,33) {};
  \coordinate (p10) at (51,24) {};
  \coordinate (p11) at (53,34) {};
  \coordinate (p11p12) at (40.1,40) {};
  \coordinate (p12) at (36,42) {};
  \coordinate (p13) at (53,43.5) {};
  \coordinate (p13p14) at (37,48.5) {};
  \coordinate (p14) at (29,51) {};
  \coordinate (p14p15) at (24.1,47.9) {};
  \coordinate (p15) at (21,46) {};
  \coordinate (p16) at (29,42) {};
  \coordinate (p17) at (23,38) {};
  \coordinate (p18) at (33,24) {};
  \coordinate (p18p19_1) at (29,24) {};
  \coordinate (p18p19_2) at (15,24) {};
  \coordinate (p19) at (9,24) {};
  \coordinate (p20) at (7,32) {};
  \coordinate (p21) at (21,34) {};
  \coordinate (p22) at (17,42) {};
  \coordinate (p22p23) at (-0.4,37.8) {};
  \coordinate (p23) at (-4,37) {};
  \coordinate (p24) at (3,16) {};
  \coordinate (p25) at (17,12) {};
  \coordinate (p26) at (1,4.9) {};
  \coordinate (p27) at (-9,28) {};
  \coordinate (p27p28) at (-14.8,18.8) {};
  \coordinate (p28) at (-16,17) {};
  \coordinate (p29) at (-19,34) {};
  \coordinate (p29p30) at (-26,23) {};
  \coordinate (p30) at (-29,18) {};
  \coordinate (p30p31) at (-28.2,13.5) {};
  \coordinate (p31) at (-27,6) {};
  \coordinate (p32) at (-19,16) {};
  \coordinate (p33) at (-22,-1) {};
  \coordinate (p33p34) at (-18.7,0.5) {};
  \coordinate (p34) at (-11,4) {};
  \coordinate (p35) at (-17,8) {};
  \coordinate (p36) at (-5,10) {};
  \coordinate (p36p37) at (-11,0) {};
  \coordinate (p37) at (-16,-8) {};
  \coordinate (p38) at (-5,-2) {};
  \coordinate (p39) at (-9,-11) {};
  \coordinate (p40) at (5,-7) {};
  \draw[very thick] (y)--(x);
  \draw[fill=green!5, draw=none] (p1)--(p2)--(p3)--(p4)--(p5)--(p6)--(p6p7);
  \draw[green] (p1)--(p2)--(p3)--(p4)--(p5)--(p6)--(p6p7);
  \draw[fill=green!5, draw=none] (p7)--(p8)--(p9)--(p9)--(p10)--(p11)--(p12)--(p13)--(p14)--(p15)--(p16)--(p17)--(p18)--(p18p19_1);
  \draw[green] (p7)--(p8)--(p9)--(p9)--(p10)--(p11)--(p12)--(p13)--(p14)--(p15)--(p16)--(p17)--(p18)--(p18p19_1);
  \draw[fill=blue!5, draw=none] (p18p19_2)--(p19)--(p20)--(p21)--(p22)--(p23)--(p24)--(p25);
  \draw[blue] (p18p19_2)--(p19)--(p20)--(p21)--(p22)--(p23)--(p24)--(p25);
  \draw[fill=blue!5, draw=none] (p27p28)--(p28)--(p29)--(p30)--(p31)--(p32)--(p33)--(p34)--(p35)--(p36);
  \draw[blue] (p27p28)--(p28)--(p29)--(p30)--(p31)--(p32)--(p33)--(p34)--(p35)--(p36);
  \draw[fill=blue!5, draw=none] (p36p37)--(p37)--(p38)--(p39)--(p40)--(y);
  \draw[blue] (p36p37)--(p37)--(p38)--(p39)--(p40)--(y);
  \draw[fill=red!25, draw=none] (y)--(x)--(p1)--(p6p7)--(p7)--(p18p19_1)--(p18p19_2)--(p25)--(p26)--(p27)--(p27p28)--(p36)--(p36p37)--cycle;
  \draw[thick, very thick] (y)--(x);
  \draw[red] (x)--(p1);
  \draw[thick, red, very thick, <-] (p1)--(p6p7);
  \draw[red] (p6p7)--(p7);
  \draw[thick, red, very thick, <-] (p7)--(p18p19_1);
  \draw[red] (p18p19_1)--(p18p19_2);
  \draw[red, very thick, ->] (p18p19_2)--(p25);
  \draw[red] (p25)--(p26);
  \draw[red] (p26)--(p27);
  \draw[red] (p27)--(p27p28);
  \draw[red, very thick, ->] (p27p28)--(p36);
  \draw[red] (p36)--(p36p37);
  \draw[red, very thick, ->] (p36p37) -- (y);
  \draw[dashed] (p36p37) -- (y);
  \draw[dashed] (y) -- (p6p7);
  \draw[dashed] (x) -- (p18p19_2);
  \coordinate (z1) at (6.4,0) {};
  \draw[dashed] (z1) -- (p27p28);
  \coordinate (z2) at (5.5,0) {};
  \draw[dashed] (z2) -- (p18p19_1);
  \begin{footnotesize}
  \node[inner sep=2pt] at (lx) {$x$};
  \node[inner sep=2pt] at (ly) {$y$};
  \end{footnotesize}
  \begin{scriptsize}
  \node at (lp1) {$b$};
  \node at (lp6p7) {$a$};
  \end{scriptsize}
\end{tikzpicture}\hskip 1cm
\begin{tikzpicture}[xscale=0.075, yscale=0.10, >=latex]
  \coordinate (y) at (0,0) {};
  \coordinate (x) at (19,0) {};
  \coordinate (ly) at (-2,-2) {};
  \coordinate (lx) at (21,-2) {};
  \coordinate (p1) at (23,4.9) {};
  \coordinate (lp1) at (22,6.4) {};
  \coordinate (p2) at (28,-4) {};
  \coordinate (p3) at (35,-4) {};
  \coordinate (p3p4) at (38.5,-0.9) {};
  \coordinate (p4) at (45,5) {};
  \coordinate (p5) at (37,14) {};
  \coordinate (p6) at (31,2) {};
  \coordinate (p6p7) at (29,6.2) {};
  \coordinate (lp6p7) at (30,7.5) {};
  \coordinate (p7) at (23,18) {};
  \coordinate (p8) at (46,20) {};
  \coordinate (p8p9) at (40.2,28.2) {};
  \coordinate (p9) at (37,33) {};
  \coordinate (p10) at (51,24) {};
  \coordinate (p11) at (53,34) {};
  \coordinate (p11p12) at (40.1,40) {};
  \coordinate (p12) at (36,42) {};
  \coordinate (p13) at (53,43.5) {};
  \coordinate (p13p14) at (37,48.5) {};
  \coordinate (p14) at (29,51) {};
  \coordinate (p14p15) at (24.1,47.9) {};
  \coordinate (p15) at (21,46) {};
  \coordinate (p16) at (29,42) {};
  \coordinate (p17) at (23,38) {};
  \coordinate (p18) at (33,24) {};
  \coordinate (p18p19_1) at (29,24) {};
  \coordinate (p18p19_2) at (15,24) {};
  \coordinate (p19) at (9,24) {};
  \coordinate (p20) at (7,32) {};
  \coordinate (p21) at (21,34) {};
  \coordinate (p22) at (17,42) {};
  \coordinate (p22p23) at (-0.4,37.8) {};
  \coordinate (p23) at (-4,37) {};
  \coordinate (p24) at (3,16) {};
  \coordinate (p25) at (17,12) {};
  \coordinate (p26) at (1,4.9) {};
  \coordinate (p27) at (-9,28) {};
  \coordinate (p27p28) at (-14.8,18.8) {};
  \coordinate (p28) at (-16,17) {};
  \coordinate (p29) at (-19,34) {};
  \coordinate (p29p30) at (-26,23) {};
  \coordinate (p30) at (-29,18) {};
  \coordinate (p30p31) at (-28.2,13.5) {};
  \coordinate (p31) at (-27,6) {};
  \coordinate (p32) at (-19,16) {};
  \coordinate (p33) at (-22,-1) {};
  \coordinate (p33p34) at (-18.7,0.5) {};
  \coordinate (p34) at (-11,4) {};
  \coordinate (p35) at (-17,8) {};
  \coordinate (p36) at (-5,10) {};
  \coordinate (p36p37) at (-11,0) {};
  \coordinate (p37) at (-16,-8) {};
  \coordinate (p38) at (-5,-2) {};
  \coordinate (p39) at (-9,-11) {};
  \coordinate (p40) at (5,-7) {};
  \draw[very thick] (y)--(x);
  \draw[fill=red!30, draw=none] (p3p4)--(p4)--(p5)--(p6);
  \draw[red] (p3p4)--(p4)--(p5)--(p6);
  \draw[fill=green!30, draw=none] (p1)--(p2)--(p3)--(p3p4)--(p6)--(p6p7);
  \draw[green] (p1)--(p2)--(p3)--(p3p4)--(p6)--(p6p7);
  \draw[very thick,green,->] (p3p4)--(p6);
  \draw[fill=green!30, draw=none] (p9)--(p10)--(p11)--(p11p12);
  \draw[green] (p9)--(p10)--(p11)--(p11p12);
  \draw[fill=green!30, draw=none] (p12)--(p13)--(p13p14);
  \draw[green] (p12)--(p13)--(p13p14);
  \draw[fill=blue!30, draw=none] (p14p15)--(p15)--(p16);
  \draw[blue] (p14p15)--(p15)--(p16);
  \draw[fill=red!30, draw=none] (p8p9)--(p9)--(p11p12)--(p12)--(p13p14)--(p14)--(p14p15)--(p16)--(p17)--(p18);
  \draw[red] (p8p9)--(p9)--(p11p12)--(p12)--(p13p14)--(p14)--(p14p15)--(p16)--(p17)--(p18);
  \draw[very thick, red, <-] (p9)--(p11p12);
  \draw[very thick, red, <-] (p12)--(p13p14);
  \draw[very thick, red, ->] (p14p15)--(p16);
  \draw[fill=green!30, draw=none] (p7)--(p8)--(p8p9)--(p18)--(p18p19_1);
  \draw[green] (p7)--(p8)--(p8p9)--(p18)--(p18p19_1);
  \draw[very thick, green, ->] (p8p9)--(p18);
  \draw[fill=green!30, draw=none] (p19)--(p20)--(p21)--(p22)--(p22p23);
  \draw[green] (p19)--(p20)--(p21)--(p22)--(p22p23);
  \draw[fill=blue!30, draw=none] (p18p19_2)--(p19)--(p22p23)--(p23)--(p24)--(p25);
  \draw[blue] (p18p19_2)--(p19)--(p22p23)--(p23)--(p24)--(p25);
  \draw[blue, very thick, <-] (p19)--(p22p23);
  \draw[fill=green!30, draw=none] (p28)--(p29)--(p29p30);
  \draw[green] (p28)--(p29)--(p29p30);
  \draw[fill=red!30, draw=none] (p30p31)--(p31)--(p32);
  \draw[red] (p30p31)--(p31)--(p32);
  \draw[fill=red!30, draw=none] (p33p34)--(p34)--(p35);
  \draw[red] (p33p34)--(p34)--(p35);
  \draw[fill=blue!30, draw=none] (p27p28)--(p28)--(p29p30)--(p30)--(p30p31)--(p32)--(p33)--(p33p34)--(p35)--(p36);
  \draw[blue] (p27p28)--(p28)--(p29p30)--(p30)--(p30p31)--(p32)--(p33)--(p33p34)--(p35)--(p36);
  \draw[very thick, blue, <-] (p28)--(p29p30);
  \draw[very thick, blue, ->] (p30p31)--(p32);
  \draw[very thick, blue, ->] (p33p34)--(p35);
  \draw[fill=blue!30, draw=none] (p36p37)--(p37)--(p38)--(p39)--(p40)--(y);
  \draw[blue] (p36p37)--(p37)--(p38)--(p39)--(p40)--(y);
  \draw[fill=red!30, draw=none] (x)--(p1)--(p6p7)--(p7)--(p18p19_1)--(p18p19_2)--(p25)--(p26)--(p27)--(p27p28)--(p36)--(p36p37)--(y);
  \draw[red] (x)--(p1)--(p6p7)--(p7)--(p18p19_1)--(p18p19_2)--(p25)--(p26)--(p27)--(p27p28)--(p36)--(p36p37)--(y);
  \draw[thick,very thick] (y)--(x);
  \draw[red,very thick, <-] (p1)--(p6p7);
  \draw[red,very thick, <-] (p7)--(p18p19_1);
  \draw[red,very thick, ->] (p18p19_2)--(p25);
  \draw[red,very thick, ->] (p27p28)--(p36);
  \draw[red,very thick, ->] (p36p37) -- (y);
  \begin{footnotesize}
  \node[inner sep=2pt] at (lx) {$x$};
  \node[inner sep=2pt] at (ly) {$y$};
  \end{footnotesize}
  \tikzstyle{every node}=[circle, minimum size=3.5pt, inner sep=0pt, draw, fill=blue]
  \node at (p15) {};
  \node at (p19) {};
  \node at (p23) {};
  \node at (p24) {};
  \node at (p28) {};
  \node at (p30) {};
  \node at (p32) {};
  \node at (p33) {};
  \node at (p35) {};
  \node at (p37) {};
  \node at (p38) {};
  \node at (p39) {};
  \node at (p40) {};
  \tikzstyle{every node}=[circle, minimum size=3.5pt, inner sep=0pt, draw, fill=green]
  \node at (p2) {};
  \node at (p3) {};
  \node at (p6) {};
  \node at (p8) {};
  \node at (p10) {};
  \node at (p11) {};
  \node at (p13) {};
  \node at (p18) {};
  \node at (p20) {};
  \node at (p21) {};
  \node at (p22) {};
  \node at (p29) {};
  \tikzstyle{every node}=[circle, minimum size=3.5pt, inner sep=0pt, draw, fill=red]
  \node at (x) {};
  \node at (y) {};
  \node at (p1) {};
  \node at (p7) {};
  \node at (p25) {};
  \node at (p26) {};
  \node at (p27) {};
  \node at (p36) {};
  \node at (p4) {};
  \node at (p5) {};
  \node at (p9) {};
  \node at (p12) {};
  \node at (p14) {};
  \node at (p16) {};
  \node at (p17) {};
  \node at (p31) {};
  \node at (p34) {};
\end{tikzpicture}
\caption{To the left: a polygon $P$ with window $W_{xy}$ based at $xy$ in red, oriented lines $\protect\overrightarrow{L_{ab}}$ depicted with red arrows and dashed lines, and the left/right-invisible sets $I_{ab}$ in green/blue, respectively.
To the right: the final window partition of $P_{ab}$.}
\label{fig:visibility_components}
\end{figure}
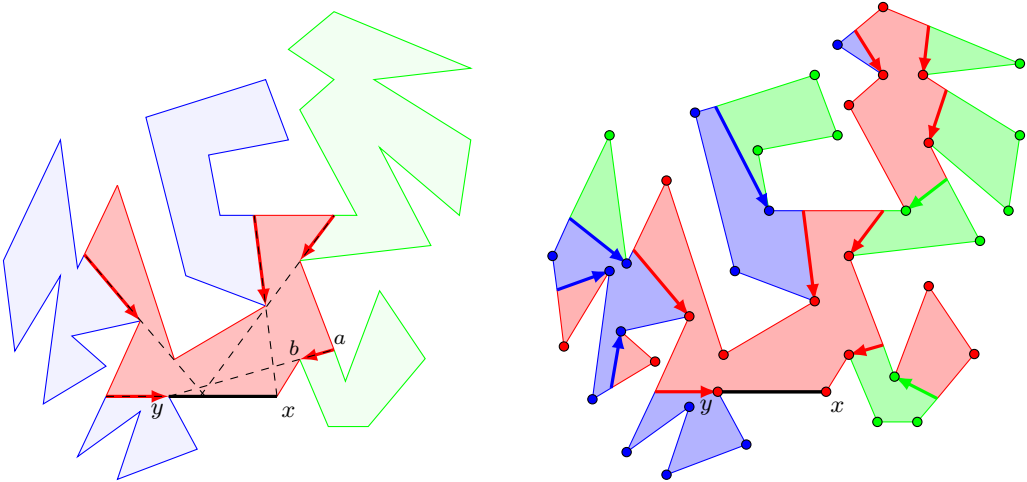

The points in $P_{xy}\setminus W_{xy}$ form some number (possibly zero) of connected subsets of $P_{xy}$.
It can be shown that each such set is of the form $I_{ab}$ for some polygon $I$ and edge $ab$ of $I$, where $a$ and $b$ are on $P$ and every point in the segment $ab$ is visible from $xy$ in $P$.
Furthermore, at least one of the points $a$ and $b$ is a vertex of $P$ and the line $L_{ab}$ going through $a$ and $b$ intersects $xy$; we direct $L_{ab}$ from $a$ and $b$ towards this intersection point to obtain an oriented line $\overrightarrow{L_{ab}}$.
Given this description, we partition the invisible sets $I_{ab}$ into two groups:
\begin{itemize}
\item $I_{ab}$ is \emph{left-invisible} if it is ``towards the left side'' of $\overrightarrow{L_{ab}}$;
\item $I_{ab}$ is \emph{right-invisible} if it is ``towards the right side'' of $\overrightarrow{L_{ab}}$.
\end{itemize}
We do not give formal definitions, but refer to Figure~\ref{fig:visibility_components}.

Given this partition, it can be shown that there are no mutually visible points in two different left-invisible sets or two different right-invisible sets.
Now, for each invisible set $I_{ab}$, we can recursively obtain a window partition of $I_{ab}$ which is rooted at a window $W_{ab}$ based at $ab$.
The window partition of $P_{xy}$ is then obtained by making each of these windows $W_{ab}$ a \emph{left-child} or a \emph{right-child} of $W_{xy}$ according to whether $I_{ab}$ is left-invisible or right-invisible.
The following observation summarizes this construction of the window partition: if two points in different windows $W_1$ and $W_2$ are mutually visible, then either $W_1$ and $W_2$ are in a parent-child relationship, or there is a window $W$ such that one of $W_1$, $W_2$ is a left-child of $W$ and the other is a right-child of $W$.

Now we show how to obtain the $3$-colouring $\phi$ of the vertex set of $(G,{\prec})$ such that each colour class induces a capped subgraph.
The property above allows us to colour the windows by three colours (say, red, green, and blue) so that no two points in two different windows of the same colour are mutually visible.
We colour the root window, say, by red.
Then we extend this colouring on the remaining windows so that the children of each window $W$ obtain a colour different from $W$ and the left children of $W$ are coloured with a different colour from the right children of $W$.
This way every vertex of $P$ other than $x$ and $y$ is coloured.
We colour $x$ and $y$ arbitrarily; see Figure~\ref{fig:visibility_components} (right).

To complete the proof, we need to show that the vertices of $P$ in $W_{xy}\cup\{x,y\}$ induce a capped subgraph of $(G,{\prec})$; for the other windows we can apply induction.
It is well known that the related class of ordered terrain visibility graphs is capped \cite[Lemma~1]{Evans2015}, but we give a proof sketch anyway, because that lemma does not apply directly.

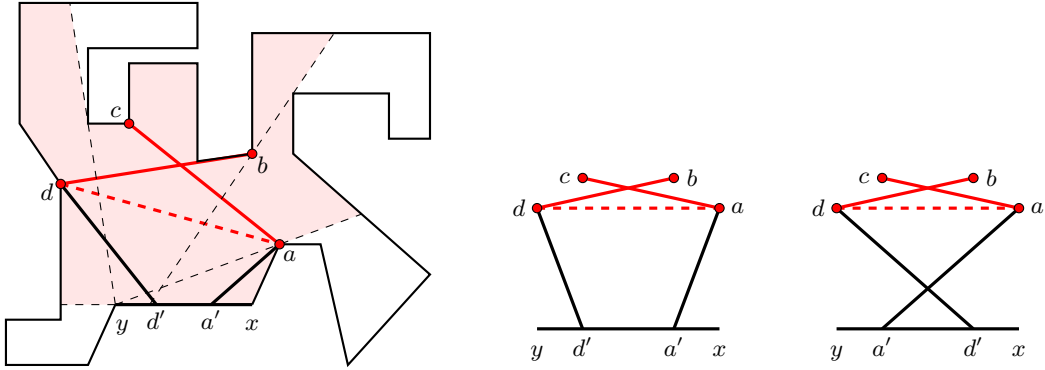
\begin{figure}[t]
\centering
\begin{tikzpicture}[xscale=0.18, yscale=0.2]
  \coordinate (y) at (0,0) {};
  \coordinate (x) at (10,0) {};
  \coordinate (ly) at (0.5,-1.5) {};
  \coordinate (lx) at (10,-1.35) {};
  \coordinate (p1) at (12,4) {};
  \coordinate (a) at (12,4) {};
  \coordinate (la) at (12.75,3.25) {};
  \coordinate (p2) at (15,4) {};
  \coordinate (p3) at (17,-4) {};
  \coordinate (p4) at (23,2) {};
  \coordinate (p4p5) at (18,6) {};
  \coordinate (p5) at (13,10) {};
  \coordinate (p5p6) at (13,13) {};
  \coordinate (p6) at (13,14) {};
  \coordinate (p7) at (20,14) {};
  \coordinate (p8) at (20,11) {};
  \coordinate (p9) at (23,11) {};
  \coordinate (p10) at (23,18) {};
  \coordinate (p10p11) at (16,18) {};
  \coordinate (sp10p11) at (2.5,0) {};
  \coordinate (p11) at (10,18) {};
  \coordinate (p12) at (10,10) {};
  \coordinate (b) at (10,10) {};
  \coordinate (lb) at (10.75,9.25) {};
  \coordinate (p13) at (6,9.5) {};
  \coordinate (p14) at (6,16) {};
  \coordinate (p15) at (1,16) {};
  \coordinate (p16) at (1,12) {};
  \coordinate (c) at (1,12) {};
  \coordinate (lc) at (0,12.75) {};
  \coordinate (p17) at (-2,12) {};
  \coordinate (p18) at (-2,17) {};
  \coordinate (p19) at (6,17) {};
  \coordinate (p20) at (6,20) {};
  \coordinate (p20p21) at (-3.33,20) {};
  \coordinate (p21) at (-7,20) {};
  \coordinate (p22) at (-7,12) {};
  \coordinate (p23) at (-4,8) {};
  \coordinate (d) at (-4,8) {};
  \coordinate (ld) at (-5,7.5) {};
  \coordinate (p23p24) at (-4,0) {};
  \coordinate (p24) at (-4,-1) {};
  \coordinate (p25) at (-8,-1) {};
  \coordinate (p26) at (-8,-4) {};
  \coordinate (p27) at (-2,-4) {};
  \coordinate (d') at (3,0) {};
  \coordinate (a') at (7,0) {};
  \coordinate (ld') at (3,-1) {};
  \coordinate (la') at (7,-1) {};
  \draw[fill=red!10, draw=none] (x)--(p1)--(p4p5)--(p5)--(p6)--(p10p11)--(p11)--(p12)--(p13)--(p14)--(p15)--(p16)--(p17)--(p20p21)--(p21)--(p22)--(p23)--(p23p24)--(y);
  \draw[very thick] (y) -- (x);
  \draw[very thick, red] (a) -- (c);
  \draw[very thick, red] (b) -- (d);
  \draw[red, very thick, dashed] (d) -- (a);
  \draw[thick] (x)--(p1)--(p2)--(p3)--(p4)--(p5)--(p6)--(p7)--(p8)--(p9)--(p10)--(p11)--(p12)--(p13)--(p14)--(p15)--(p16)--(p17)--(p18)--(p19)--(p20)--(p21)--(p22)--(p23)--(p24)--(p25)--(p26)--(p27)--(y);
  \draw[very thick] (d) -- (d') {};
  \draw[very thick] (a) -- (a') {};
  \draw[dashed, thin] (y) -- (p4p5);
  \draw[dashed, thin] (sp10p11) -- (p10p11) {};
  \draw[dashed, thin] (y) -- (p20p21) {};
  \draw[dashed, thin] (y) -- (p23p24) {};
  \begin{footnotesize}
  \tikzstyle{every node}=[inner sep=2pt]
  \node at (la) {$a$};
  \node at (lb) {$b$};
  \node at (lc) {$c$};
  \node at (ld) {$d$};
  \node at (lx) {$x$};
  \node at (ly) {$y$};
  \node at (ld') {$d'$};
  \node at (la') {$a'$};
  \end{footnotesize}
  \tikzstyle{every node}=[circle, minimum size=3.5pt, inner sep=0pt, draw, fill=red]
  \node at (a) {};
  \node at (b) {};
  \node at (c) {};
  \node at (d) {};
\end{tikzpicture}\hskip 1cm
\begin{tikzpicture}[xscale=0.6, yscale=0.4]
  \coordinate (y) at (0,0) {};
  \coordinate (x) at (4,0) {};
  \coordinate (lu) at (0,4) {};
  \coordinate (lb) at (1,0) {};
  \coordinate (ru) at (4,4) {};
  \coordinate (rb) at (3,0) {};
  \coordinate (m) at (2,2) {};
  \coordinate (lm) at (1.5,2) {};
  \coordinate (q) at (2,4.65) {};
  \coordinate (lq) at (2,5.25) {};
  \coordinate (c) at (1,5) {};
  \coordinate (lc) at (0.6,5) {};
  \coordinate (be) at (3,5) {};
  \coordinate (lbe) at (3.4,5) {};
  \coordinate (ly) at (0,-1.25) {};
  \coordinate (lx) at (4,-1.1) {};
  \coordinate (llu) at (-0.4,4) {};
  \coordinate (llb) at (1,-1.1) {};
  \coordinate (lru) at (4.4,4) {};
  \coordinate (lrb) at (3,-1.1) {};
  \draw[very thick] (y) -- (x);
  \draw[very thick] (lb) -- (lu);
  \draw[very thick] (rb) -- (ru);
  \draw[very thick, red] (lu) -- (be);
  \draw[very thick, red] (ru) -- (c);
  \draw[very thick, red, dashed] (lu) -- (ru);
  \begin{footnotesize}
  \tikzstyle{every node}=[inner sep=2pt]
  \node[above] at (lx) {$x$};
  \node[above] at (ly) {$y$};
  \node at (llu) {$d$};
  \node[above] at (llb) {$d'$};
  \node at (lru) {$a$};
  \node[above] at (lrb) {$a'$};
  \node at (lc) {$c$};
  \node at (lbe) {$b$};
  \end{footnotesize}
  \tikzstyle{every node}=[circle, minimum size=3.5pt, inner sep=0pt, draw, fill=red]
  \node at (lu) {};
  \node at (ru) {};
  \node at (c) {};
  \node at (be) {};
\end{tikzpicture}\hskip .75cm
\begin{tikzpicture}[xscale=0.6, yscale=0.4]
  \coordinate (y) at (0,0) {};
  \coordinate (x) at (4,0) {};
  \coordinate (lu) at (0,4) {};
  \coordinate (lb) at (1,0) {};
  \coordinate (ru) at (4,4) {};
  \coordinate (rb) at (3,0) {};
  \coordinate (m) at (2,2) {};
  \coordinate (lm) at (1.5,2) {};
  \coordinate (q) at (2,4.65) {};
  \coordinate (lq) at (2,5.25) {};
  \coordinate (c) at (1,5) {};
  \coordinate (lc) at (0.6,5) {};
  \coordinate (be) at (3,5) {};
  \coordinate (lbe) at (3.4,5) {};
  \coordinate (ly) at (0,-1.25) {};
  \coordinate (lx) at (4,-1.1) {};
  \coordinate (llu) at (-0.4,4) {};
  \coordinate (llb) at (1,-1.1) {};
  \coordinate (lru) at (4.4,4) {};
  \coordinate (lrb) at (3,-1.1) {};
  \draw[very thick] (y) -- (x);
  \draw[very thick] (lb) -- (ru);
  \draw[very thick] (rb) -- (lu);
  \draw[very thick, red] (lu) -- (be);
  \draw[very thick, red] (ru) -- (c);
  \draw[very thick, red, dashed] (lu) -- (ru);
  \begin{footnotesize}
  \tikzstyle{every node}=[inner sep=2pt]
  \node[above] at (lx) {$x$};
  \node[above] at (ly) {$y$};
  \node at (llu) {$d$};
  \node[above] at (llb) {$a'$};
  \node at (lru) {$a$};
  \node[above] at (lrb) {$d'$};
  \node at (lc) {$c$};
  \node at (lbe) {$b$};
  \end{footnotesize}
  \tikzstyle{every node}=[circle, minimum size=3.5pt, inner sep=0pt, draw, fill=red]
  \node at (lu) {};
  \node at (ru) {};
  \node at (c) {};
  \node at (be) {};
\end{tikzpicture}
\caption{Possible relations between the segments $a'a,d'd,ac,bd,yx$.}
\label{fig:abcd}
\end{figure}

Suppose there are four vertices $a,b,c,d\in W_{xy}\cup\{x,y\}$ such that $a\prec b\prec c\prec d$, $ac\in E(G)$, and $bd\in E(G)$ (it is possible that $a=x$ and/or $d=y$).
By the definition of visibility from $xy$, all four points $a,b,c,d$ are in the closed half-plane to the left of the line from $y$ to $x$.
Let $a',d'\in xy$ be such that the segments $a'a$ and $d'd$ are disjoint from the exterior of $P$; see Figure~\ref{fig:abcd} for an illustration.
Now, the five segments $a'a,d'd,ac,bd,yx$ divide the plane into a set $\cF$ of faces, and exactly one of the faces in $\cF$ is unbounded (the outer face).
Now, to show that $ad$ is disjoint from the exterior of $P$, it suffices to prove the following two claims.
\begin{itemize}
\item The polygon $P$ is contained in the closure of the outer face of $\cF$ and $P$ does not cross (but may touch) any segment in the set $\{a'a,d'd,ac,bd\}$.
\item The segment $ad$ is disjoint from the interior of the outer face of $\cF$.
\end{itemize}
The first claim is quite obvious; see the left side of Figure~\ref{fig:abcd} for an illustration.
The second claim can be proven by considering all the cases for how the segments $a'a$, $d'd$, $ac$, and $bd$ can be placed with respect to each other.
We leave the details to the reader.
\end{proof}

Now, we proceed to the proof of Proposition~\ref{prop:partition} in the general case.
It is convenient to word it in terms of colourings; we will find a $3$-colouring such that every colour class induces a capped subgraph.
The proof will work by extending certain partial colourings ``inside a valid segment''.
To explain this, we need to introduce some notation.

Let $(G,{\prec})$ be an ordered graph.
A \emph{segment} of $(G,{\prec})$ is a set of at least two consecutive vertices; that is, a segment is a set $V[x,y]$ consisting of vertices $x$ and $y$ with $x\prec y$, and all vertices $v$ with $x\prec v\prec y$.
Such vertices $v$ are called \emph{interior}, the vertices $x$ and $y$ are called the \emph{ends}, and vertices in $V(G)\setminus V[x,y]$ are called \emph{exterior}.
We write $V(x,y)$ for $V[x,y]\setminus\{x,y\}$ and similarly for $V[x,y)$ and $V(x,y]$.
A segment is \emph{valid} if there is a crossing sequence from $y$ to $x$ (see Figure~\ref{fig:validSeg}).

\begin{figure}[t]
\centering
\begin{tikzpicture}[scale=.7, every node/.style={inner sep=2, outer sep=0}, vtx/.style={draw, circle, fill=gray}]
  \node[vtx, inner sep=3, label=below left:$x$] (x) at (-4,0) {};
  \node[vtx, inner sep=3, label=below right:$y$] (y) at (4,0) {};
  \node[label=below:{$V[x,y]$}] at (0,-.85) {};
  \draw[ultra thick] (x) -- (-4,-.85) -- (4,-.85) -- (y);
  \node[vtx] (a1) at (6.5,0) {};
  \node[vtx] (a2) at (8,0) {};
  \node[vtx] (a3) at (-8.5,0) {};
  \node[vtx] (a4) at (-7,0) {};
  \draw[thick] (a4) to [bend left=30] (a1);
  \draw[thick] (a3) to [bend left=30] (x);
  \draw[thick] (y) to [bend left=30] (a2);
\end{tikzpicture}
\caption{A valid segment $V[x,y]$.}
\label{fig:validSeg}
\end{figure}
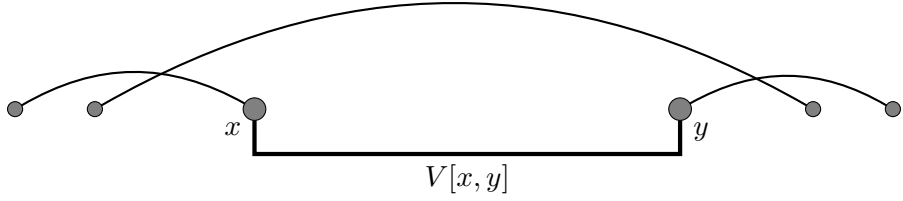

Recall the proof sketch for partitioning the vertex set of a polygon visibility graph into three capped graphs.
We had a partial colouring $\phi$ and a segment $V[x,y]$ such that $x$ and $y$ were coloured and connected by an edge.
The coloured interior vertices were exactly those vertices which were ``visible'' to the edge $xy$, and these vertices were all given the same colour.
We then partitioned the interior vertices into segments which were ``left-invisible'' and segments which were ``right-invisible'' and continued the process.

Now, instead of requiring that $x$ and $y$ are connected by an edge, we just require that the segment $V[x,y]$ is valid.
It turns out that even for visibility graphs of points on a Jordan curve, it is not always possible to tell from $(G,{\prec})$ alone if an ``invisible'' vertex is ``left-invisible'' or ``right-invisible''; the vertex could, for instance, even be isolated.
So instead we will define left-reachable vertices which definitely cannot be ``left-invisible'' and right-reachable vertices which definitely cannot be ``right-invisible''.
Vertices which are both left- and right-reachable will be called strongly reachable; these are the vertices that are ``visible''.
The proof ends up being rather technical, but these definitions are key.
For the following, refer to Figure~\ref{fig:reachable}.

\begin{figure}[t]
\centering
\begin{tikzpicture}[scale=.7, every node/.style={inner sep=2, outer sep=0}, vtx/.style={draw, circle, fill=gray}]
  \node[vtx, inner sep=3, label=below left:$x$] (x) at (-4,0) {};
  \node[vtx, inner sep=3, label=below right:$y$] (y) at (4,0) {};
  \node[label=below:{$V[x,y]$}] at (0,-.85) {};
  \draw[ultra thick] (x) -- (-4,-.85) -- (4,-.85) -- (y);
  \node[vtx] (a1) at (6.5,0) {};
  \node[vtx] (a2) at (8,0) {};
  \node[vtx] (a3) at (-8.5,0) {};
  \node[vtx] (a4) at (-7,0) {};
  \draw[thick] (a4) to [bend left=30] (a1);
  \draw[thick] (a3) to [bend left=30] (x);
  \draw[thick] (y) to [bend left=30] (a2);
  \node[vtx, label=below:$S$] (vS) at (-2.5,0) {};
  \node[vtx, label=below:$R$] (vR) at (-1,0) {};
  \node[vtx, label=below:$L$] (vL) at (2.5,0) {};
  \node[vtx] (b1) at (-5.5,0) {};
  \node[vtx, label=below:$S$] (b2) at (0,0) {};
  \node[vtx, label=below:$L$] (b3) at (1,0) {};
  \node[vtx] (b4) at (5.25,0) {};
  \draw[thick] (b1) to [bend left=30] (vS);
  \draw[thick] (vL) to [bend left=30] (y);
  \draw[thick] (vR) to [bend left=30] (b3);
  \draw[thick] (b2) to [bend left=30] (b4);
\end{tikzpicture}
\caption{Internal vertices which are strongly-reachable (labelled $S$) or left- or right-reachable only (labelled $L$ and $R$ respectively).}
\label{fig:reachable}
\end{figure}
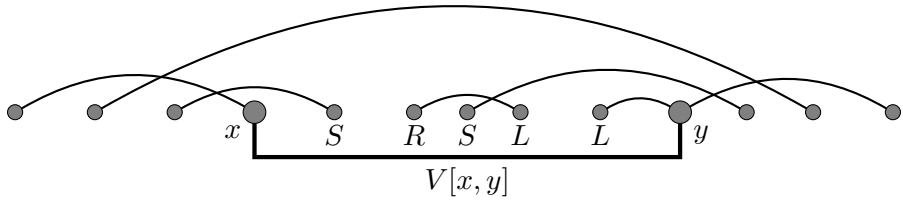

Let $V[x,y]$ be a valid segment.
A vertex $v$ is \emph{left-reachable} from $V[x,y]$ if it is in $V[x,y)$ and there is a crossing sequence from $y$ to $v$ (so in particular $x$ is left-reachable).
It is \emph{right-reachable} if it is in $V(x,y]$ and there is a crossing sequence from $v$ to $x$ (so $y$ is right-reachable).
It is \emph{strongly-reachable} if it is both left- and right-reachable (so all strongly reachable vertices are interior).
We write $L[x,y)$, $R(x,y]$, and $S(x,y)$ for the set of left-, right-, and strongly-reachable vertices, respectively.
So $S(x,y) = L[x,y)\cap R(x,y]$.
First we observe the following.

\begin{lemma}
\label{lem:extInt}
For any ordered graph\/ $(G,{\prec})$ and valid segment\/ $V[x,y]$, every interior vertex which is adjacent to an exterior vertex is strongly reachable.
\end{lemma}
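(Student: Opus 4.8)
The plan is to show directly that if $v$ is an interior vertex of $V[x,y]$ adjacent to an exterior vertex $w$, then $v$ is both left- and right-reachable from $V[x,y]$. The engine is Lemma~\ref{lem:almostTran} (``almost transitivity'' of crossing sequences), fed with the validity of $V[x,y]$ (a crossing sequence from $y$ to $x$) and the trivial crossing sequence consisting of the single edge $vw$. The only structural fact I need beyond these is the cyclic position of the four vertices $x,y,v,w$, which are automatically distinct since $v$ is interior and $w$ is exterior.

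First I would pin down that cyclic order. Since $v$ is interior we have $x\prec v\prec y$, and since $w$ is exterior either $w\prec x$ or $y\prec w$. In the first case $\prec$ restricted to $\{w,x,v,y\}$ reads $w\prec x\prec v\prec y$, and in the second it reads $x\prec v\prec y\prec w$; reading the same cyclic order starting from $y$ gives, in both cases, $y,w,x,v$. Consequently there is a rotation $\prec_1$ of $\prec$ with $y\prec_1 w\prec_1 x\prec_1 v$, and a rotation $\prec_2$ of $\prec$ with $v\prec_2 y\prec_2 w\prec_2 x$.

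For left-reachability I would work in $\prec_1$. Validity of $V[x,y]$ provides a crossing sequence from $y$ to $x$, which is a crossing sequence from $y$ to $x$ in $(G,\prec_1)$ because $y\prec_1 x$; the edge $wv$ is a crossing sequence from $w$ to $v$, and $w\prec_1 v$. Applying Lemma~\ref{lem:almostTran} in $(G,\prec_1)$ with $a=y\prec_1 b=w\prec_1 c=x\prec_1 d=v$ produces a crossing sequence from $y$ to $v$ in $(G,\prec_1)$, hence a crossing sequence from $y$ to $v$ in $(G,\prec)$ since $v\prec y$ and $y\prec_1 v$. As $v\in V[x,y)$, this shows $v$ is left-reachable. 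Analogously, working in $\prec_2$, validity again supplies a crossing sequence from $y$ to $x$ (available since $y\prec_2 x$), the edge $vw$ is a crossing sequence from $v$ to $w$ with $v\prec_2 w$, and Lemma~\ref{lem:almostTran} with $a=v\prec_2 b=y\prec_2 c=w\prec_2 d=x$ yields a crossing sequence from $v$ to $x$; as $v\in V(x,y]$, the vertex $v$ is right-reachable. Hence $v$ is strongly reachable, completing the proof.

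The argument is essentially mechanical once the cyclic order is fixed, and it needs no case split beyond that observation. The one point requiring care is the bookkeeping with rotations: one must check that ``a crossing sequence from $y$ to $x$'' really is available as an honest crossing sequence in each of $(G,\prec_1)$ and $(G,\prec_2)$ — which is exactly the rotation-invariance of the crossing-sequence notion, valid as long as the source precedes the target — and that the conclusions of Lemma~\ref{lem:almostTran}, obtained inside the rotated orders, translate back into crossing sequences for $(G,\prec)$ itself. I expect keeping these two rotations straight, rather than any substantive difficulty, to be the main thing to watch.
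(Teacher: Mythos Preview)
Your proof is correct and follows essentially the same approach as the paper: feed the validity crossing sequence from $y$ to $x$ and the single-edge crossing sequence through the exterior neighbour into Lemma~\ref{lem:almostTran}, once for each of left- and right-reachability. The paper's version is terser and leaves the rotation bookkeeping implicit, whereas you have spelled out the cyclic order $y,w,x,v$ and the two rotations $\prec_1,\prec_2$ explicitly; this extra care is harmless and arguably clearer.
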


\begin{proof}
Let $v$ be an interior vertex which is adjacent to an exterior vertex $u$.
There are crossing sequences from $y$ to $x$ and from $u$ to $v$.
Thus, by Lemma~\ref{lem:almostTran}, there is a crossing sequence from $y$ to $v$, and so $v$ is left-reachable.
Likewise, since there are crossing sequences from $v$ to $u$ and from $y$ to $x$, the vertex $v$ is also right-reachable, so it is strongly-reachable.
\end{proof}

As further motivation for these definitions, we prove the following.

\begin{lemma}
\label{lem:induceCapped}
For any\/ $\cH$-free ordered graph\/ $(G,{\prec})$ and valid segment\/ $V[x,y]$, the set\/ $S(x,y)\cup\{x,y\}$ induces a capped subgraph.
\end{lemma}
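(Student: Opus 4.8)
The plan is to argue by contradiction, using that $(G,{\prec})$ is $\cH$-free. Fix four vertices $a\prec b\prec c\prec d$ of $S(x,y)\cup\{x,y\}$ with $ac,bd\in E(G)$, and suppose toward a contradiction that $ad\notin E(G)$. The goal is to produce crossing sequences both from $a$ to $d$ and from $d$ to $a$; together with $a\not\sim d$, this exhibits $(G,{\prec})$ as a member of $\cH$, a contradiction.

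First I would dispose of some positional bookkeeping. Since $S(x,y)\subseteq V(x,y)$ and $x,y$ are the extreme vertices of the segment, all of $a,b,c,d$ lie in $V[x,y]$, and $a\prec b\prec c\prec d$ then forces $a\neq y$ and $d\neq x$; hence $a\in S(x,y)\cup\{x\}$ and $d\in S(x,y)\cup\{y\}$. The crossing sequence from $a$ to $d$ is immediate: as $a\prec b\prec c\prec d$ with $ac,bd\in E(G)$, the edge $ac$ crosses $bd$, so $ac,bd$ is a crossing sequence from $a$ to $d$.

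The substance is the crossing sequence from $d$ to $a$, and this is exactly where validity of $V[x,y]$ and the reachability notions enter. I would first observe that there is a crossing sequence from $d$ to $x$: if $d=y$ this is the defining property of a valid segment, and if $d\in S(x,y)\subseteq R(x,y]$ then $d$ is right-reachable, so such a sequence exists by definition. Symmetrically, there is a crossing sequence from $y$ to $a$ (validity of $V[x,y]$ if $a=x$; left-reachability of $a$ if $a\in S(x,y)\subseteq L[x,y)$). To splice these into a crossing sequence from $d$ to $a$, I would split into cases: if $d=y$ the sequence from $y=d$ to $a$ already does the job, and if $a=x$ the sequence from $d$ to $x=a$ does; and if $d\neq y$ and $a\neq x$, then in the rotation $\prec'$ of $\prec$ in which $d$ is smallest we have $d\prec' y\prec' x\prec' a$, so Lemma~\ref{lem:almostTran} applied to $d\prec' y\prec' x\prec' a$ with the two crossing sequences found above (from $d$ to $x$ and from $y$ to $a$) produces a crossing sequence from $d$ to $a$. (The degenerate case $d=y$ and $a=x$ is subsumed by either of the first two cases.) This yields a crossing sequence from $d$ to $a$ in all cases, and the proof then concludes as described in the first paragraph.

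I expect the only delicate points to be the boundary cases $a=x$ and $d=y$ and choosing the right rotation, together with the remark that Lemma~\ref{lem:almostTran} may be applied inside a rotation of $\prec$ — which is legitimate because being a crossing sequence from $u$ to $v$ is invariant under rotation once the roles of $u$ and $v$ are fixed. None of this involves genuine computation; the whole argument is a short application of Lemma~\ref{lem:almostTran} and the definitions of valid segment and of left- and right-reachability.
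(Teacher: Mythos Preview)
Your proposal is correct and follows essentially the same approach as the paper: assume a capped violation $a\prec b\prec c\prec d$ with $ac,bd\in E(G)$ and $ad\notin E(G)$, exhibit $ac,bd$ as a crossing sequence from $a$ to $d$, and obtain a crossing sequence from $d$ to $a$ by combining right-reachability of $d$ (a crossing sequence from $d$ to $x$) with left-reachability of $a$ (a crossing sequence from $y$ to $a$) via Lemma~\ref{lem:almostTran}, contradicting $\cH$-freeness. The paper's proof is terser---it absorbs the boundary cases $a=x$ and $d=y$ into the observation that $x$ is left-reachable and $y$ is right-reachable by definition, and it leaves the rotation needed for Lemma~\ref{lem:almostTran} implicit---but your more explicit case analysis is entirely sound.
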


\begin{proof}
Otherwise, there are vertices $a,b,c,d\in S(x,y)\cup\{x,y\}$ such that $a\prec b\prec c\prec d$, and $ac,bd\in E(G)$, yet $ad\notin E(G)$.
Then, since $d$ is right-reachable, there is a crossing sequence from $d$ to $x$.
Since $a$ is left-reachable, there is a crossing sequence from $y$ to $a$.
Thus, there is a crossing sequence from $d$ to $a$ (this is trivial if $d=y$ or $a=x$, and otherwise we apply Lemma~\ref{lem:almostTran}).
This contradicts the assumption that $(G,{\prec})$ is $\cH$-free.
\end{proof}

Now we need to define certain partial colourings and what it means to extend them.
For the following definition, refer to Figure~\ref{fig:partial}.

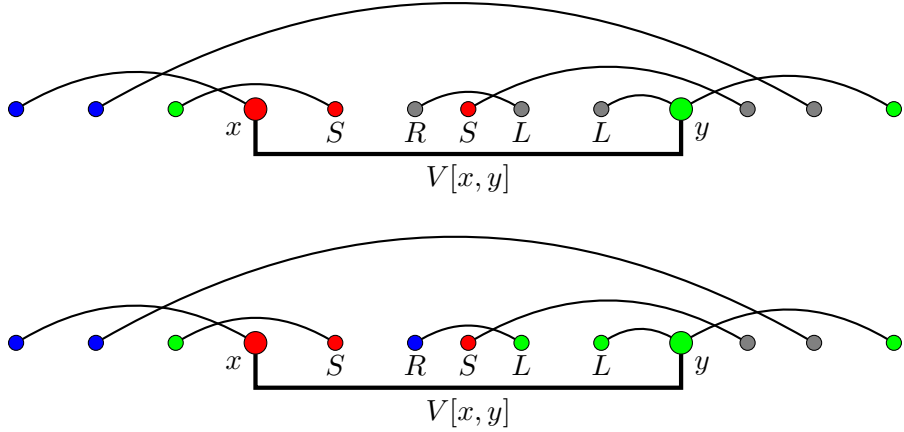
\begin{figure}[t]
\centering
\begin{tikzpicture}[scale=.7, every node/.style={inner sep=2, outer sep=0}, vtx/.style={draw, circle, fill=gray}]
  \node[vtx, inner sep=3, label=below left:$x$, fill=red] (x) at (-4,0) {};
  \node[vtx, inner sep=3, label=below right:$y$, fill=green] (y) at (4,0) {};
  \node[label=below:{$V[x,y]$}] at (0,-.85) {};
  \draw[ultra thick] (x) -- (-4,-.85) -- (4,-.85) -- (y);
  \node[vtx] (a1) at (6.5,0) {};
  \node[vtx, fill=green] (a2) at (8,0) {};
  \node[vtx, fill=blue] (a3) at (-8.5,0) {};
  \node[vtx, fill=blue] (a4) at (-7,0) {};
  \draw[thick] (a4) to [bend left=30] (a1);
  \draw[thick] (a3) to [bend left=30] (x);
  \draw[thick] (y) to [bend left=30] (a2);
  \node[vtx, label=below:$S$, fill=red] (vS) at (-2.5,0) {};
  \node[vtx, label=below:$R$] (vR) at (-1,0) {};
  \node[vtx, label=below:$L$] (vL) at (2.5,0) {};
  \node[vtx, fill=green] (b1) at (-5.5,0) {};
  \node[vtx, label=below:$S$, fill=red] (b2) at (0,0) {};
  \node[vtx, label=below:$L$] (b3) at (1,0) {};
  \node[vtx] (b4) at (5.25,0) {};
  \draw[thick] (b1) to [bend left=30] (vS);
  \draw[thick] (vL) to [bend left=30] (y);
  \draw[thick] (vR) to [bend left=30] (b3);
  \draw[thick] (b2) to [bend left=30] (b4);
\end{tikzpicture}
\begin{tikzpicture}[scale=.7, every node/.style={inner sep=2, outer sep=0}, vtx/.style={draw, circle, fill=gray}]
  \node[vtx, inner sep=3, label=below left:$x$, fill=red] (x) at (-4,0) {};
  \node[vtx, inner sep=3, label=below right:$y$, fill=green] (y) at (4,0) {};
  \node[label=below:{$V[x,y]$}] at (0,-.85) {};
  \draw[ultra thick] (x) -- (-4,-.85) -- (4,-.85) -- (y);
  \node[vtx] (a1) at (6.5,0) {};
  \node[vtx, fill=green] (a2) at (8,0) {};
  \node[vtx, fill=blue] (a3) at (-8.5,0) {};
  \node[vtx, fill=blue] (a4) at (-7,0) {};
  \draw[thick] (a4) to [bend left=30] (a1);
  \draw[thick] (a3) to [bend left=30] (x);
  \draw[thick] (y) to [bend left=30] (a2);
  \node[vtx, label=below:$S$, fill=red] (vS) at (-2.5,0) {};
  \node[vtx, label=below:$R$, fill=blue] (vR) at (-1,0) {};
  \node[vtx, label=below:$L$, fill=green] (vL) at (2.5,0) {};
  \node[vtx, fill=green] (b1) at (-5.5, 0) {};
  \node[vtx, label=below:$S$, fill=red] (b2) at (0,0) {};
  \node[vtx, label=below:$L$, fill=green] (b3) at (1,0) {};
  \node[vtx] (b4) at (5.25,0) {};
  \draw[thick] (b1) to [bend left=30] (vS);
  \draw[thick] (vL) to [bend left=30] (y);
  \draw[thick] (vR) to [bend left=30] (b3);
  \draw[thick] (b2) to [bend left=30] (b4);
\end{tikzpicture}
\caption{A $V[x,y]$-precolouring $\phi$ (top) and a $V[x,y]$-extension of $\phi$ (bottom).}
\label{fig:partial}
\end{figure}

For an ordered graph $(G,\prec)$ and a valid segment $V[x,y]$, a \emph{$V[x,y]$-precolouring} is a partial $3$-colouring $\phi$ such that
\begin{enumeratei}
\item every colour class of $\phi$ induces a capped subgraph,
\item $\phi$ colours $S(x,y)\cup\{x,y\}$ and colours no other interior vertices, and
\item $\phi$ colours every vertex in $S(x,y)$ the same colour, and no exterior vertex which is a neighbour of an interior vertex is given this colour.
\end{enumeratei}
Finally, $\phi$ \emph{extends inside} $V[x,y]$ if the uncoloured interior vertices can be coloured so that for each colour class $X$, the set $X\cap V[x,y]$ induces a capped subgraph.
The new colouring is a \emph{$V[x,y]$-extension} of $\phi$.
The next lemma implies that each colour class of a $V[x,y]$-extension induces a capped subgraph.

\begin{lemma}
\label{lem:extends}
If\/ $(G,{\prec})$ is an ordered graph with a segment\/ $V[x,y]$ such that no interior vertex is adjacent to an exterior vertex and each of the sets\/ $V[x,y]$ and\/ $V(G)\setminus V(x,y)$ induces a capped subgraph, then\/ $(G,{\prec})$ is capped.
\end{lemma}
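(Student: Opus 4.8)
The plan is to reduce the claim to the cappedness of the two given induced subgraphs, by showing that any four vertices witnessing a failure of the capped condition must in fact all lie within one of the two sets $V[x,y]$ or $V(G)\setminus V(x,y)$. Throughout, recall that $V[x,y]=\{v\colon x\preceq v\preceq y\}$ and $V(G)\setminus V(x,y)=\{v\colon v\preceq x\text{ or }v\succeq y\}$, and that these two sets overlap exactly in $\{x,y\}$.

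First I would record the structural consequence of the hypothesis that no interior vertex is adjacent to an exterior vertex: \emph{every edge of $G$ has both endpoints in $V[x,y]$ or both endpoints in $V(G)\setminus V(x,y)$.} Indeed, if an edge $uv$ has an interior endpoint, then the other endpoint cannot be exterior (by the hypothesis), so it lies in $V[x,y]$ together with the interior one; and if neither endpoint of $uv$ is interior, then both endpoints lie in $V(G)\setminus V(x,y)$.

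Next comes the core observation, which is purely order-theoretic: \emph{if $a\prec b\prec c\prec d$ and $ac,bd\in E(G)$, then $ac$ and $bd$ cannot have one edge with both endpoints in $V[x,y]$ and the other with both endpoints in $V(G)\setminus V(x,y)$.} Suppose first that $\{a,c\}\subseteq V[x,y]$ while $\{b,d\}\subseteq V(G)\setminus V(x,y)$. Then $a\succeq x$ and $a\prec b$ force $b\succ x$, so from $b\in V(G)\setminus V(x,y)$ we get $b\succeq y$; but $c\preceq y$ and $b\prec c$ force $b\prec y$, a contradiction. The other allocation is ruled out symmetrically: if $\{a,c\}\subseteq V(G)\setminus V(x,y)$ while $\{b,d\}\subseteq V[x,y]$, then $d\preceq y$ and $c\prec d$ force $c\prec y$, so from $c\in V(G)\setminus V(x,y)$ we get $c\preceq x$; but then $b\prec c\preceq x$ contradicts $b\succeq x$.

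Finally I would combine the two steps. Given $a\prec b\prec c\prec d$ with $ac,bd\in E(G)$, the first step places each of the edges $ac$ and $bd$ entirely inside one of the two sets, and the second step forces them into the \emph{same} set, call it $W$ (so $W$ is $V[x,y]$ or $V(G)\setminus V(x,y)$). Since $G[W]$ is capped by hypothesis and $a\prec b\prec c\prec d$ with $ac,bd\in E(G[W])$, we conclude $ad\in E(G)$; hence $(G,{\prec})$ is capped. I do not expect a serious obstacle here; the only point requiring care is that the two vertex sets share the endpoints $x$ and $y$, so one must make sure a vertex equal to $x$ or $y$ is assigned consistently — but the weak inequalities used above already account for this.
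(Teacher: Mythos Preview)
Your proof is correct and follows essentially the same approach as the paper: both arguments show that any four vertices $a\prec b\prec c\prec d$ with $ac,bd\in E(G)$ must lie entirely within one of the two sets $V[x,y]$ or $V(G)\setminus V(x,y)$, whereupon cappedness of the induced subgraph finishes the job. The paper argues by contradiction and a short case analysis on which of $a,b,c,d$ is interior versus exterior, whereas you factor the argument through the clean intermediate observation that every edge lies entirely in one of the two sets and then rule out the ``one edge in each set'' configuration directly; this is a minor organizational difference, not a different method.
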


\begin{proof}
Suppose for the sake of contradiction that there are vertices $a,b,c,d$ such that $a\prec b\prec c\prec d$ and $ac,bd\in E(G)$ while $ad\notin E(G)$.
Then at least one of $a,b,c,d$ must be exterior to $V[x,y]$ and at least one must be interior.
If $b$ is exterior, then either $a$ or $d$ is also exterior.
So, up to symmetry, we can assume that $a$ is exterior.
Then $a\prec x$ and $c$ is not interior.
It follows that exactly one of $b$ and $d$ is interior and the other is exterior---a contradiction.
\end{proof}

We will use the following lemma in order to find valid segments inside a valid segment $V[x,y]$.
Conditions~(ii) and~(iii) are symmetric.
As an example, note that in Figure~\ref{fig:partial}, condition~(i) implies that the second and third vertices in $V[x,y]$ form a valid segment, and condition~(iii) implies that the first and second vertices in $V[x,y]$ form a valid segment.

\begin{lemma}
\label{lem:valid}
For any ordered graph\/ $(G,{\prec})$ and valid segment\/ $V[x,y]$, each segment\/ $V[a,b]$ which satisfies one of the following conditions is valid:
\begin{enumeratei}
\item $a\in L[x,y)$ and\/ $b\in R(x,y]$;
\item $a,b\in L[x,y)$, no vertex in\/ $V(a,b)$ is left-reachable from\/ $V[x,y]$, and there is a vertex in\/ $V(b,y)$ which is right-reachable from\/ $V[x,y]$;
\item $a,b\in R(x,y]$, no vertex in\/ $V(a,b)$ is right-reachable from\/ $V[x,y]$, and there is a vertex in\/ $V(x,a)$ which is left-reachable from\/ $V[x,y]$.
\end{enumeratei}
\end{lemma}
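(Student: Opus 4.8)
The plan is to work with the reachability relation: write $u \rightsquigarrow v$ when there is a crossing sequence from $u$ to $v$. Recall that $\rightsquigarrow$ is invariant under rotation, that every edge $uv$ witnesses both $u \rightsquigarrow v$ and $v \rightsquigarrow u$, and that Lemma~\ref{lem:almostTran} is precisely the statement that, whenever four distinct vertices $p, q, r, s$ appear in this counterclockwise cyclic order, $p \rightsquigarrow r$ together with $q \rightsquigarrow s$ implies $p \rightsquigarrow s$ (rotate so that $p$ is least and apply the lemma verbatim). In this language, ``$V[a,b]$ is valid'' means $b \rightsquigarrow a$, and ``$v$ is left-reachable / right-reachable from $V[x,y]$'' mean $y \rightsquigarrow v$ and $v \rightsquigarrow x$, respectively.

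Condition~(i) is then immediate: if $a = x$, then $b \rightsquigarrow a$ is just right-reachability of $b$, and if $b = y$, then $b \rightsquigarrow a$ is just left-reachability of $a$; otherwise $b, y, x, a$ are four distinct vertices in counterclockwise cyclic order, so $b \rightsquigarrow x$ (right-reachability of $b$) and $y \rightsquigarrow a$ (left-reachability of $a$) give $b \rightsquigarrow a$ by Lemma~\ref{lem:almostTran}.

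Conditions~(ii) and~(iii) are mirror images, so consider~(ii). Let $c \in V(b,y)$ be the given right-reachable vertex. Since $c \rightsquigarrow x$, $y \rightsquigarrow a$, and $c, y, x, a$ occur in counterclockwise cyclic order, Lemma~\ref{lem:almostTran} yields $c \rightsquigarrow a$. Now take a crossing sequence $f_1, \dots, f_t$ from $y$ to $b$ of minimum length (it exists because $b$ is left-reachable), and rotate so that $y$ is the least vertex, so each $f_i$ has a well-defined larger end and $f_1$ is incident to $y$. The key observation is that no $f_i$ has its larger end in $V(a,b)$: otherwise $f_1, \dots, f_i$ would be a crossing sequence from $y$ to a vertex of $V(a,b)$, contradicting the hypothesis that no vertex of $V(a,b)$ is left-reachable. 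Write $f_t = wb$, with $w$ its smaller end. If $w = a$, then $ab \in E(G)$ and $b \rightsquigarrow a$. If $w$ precedes $a$ (equivalently, $w$ lies outside $V(a,b)$, namely on the counterclockwise arc from $y$ through $x$ to $a$), then the edge $bw$ gives $b \rightsquigarrow w$, and since $b, c, w, a$ then occur in counterclockwise cyclic order, $c \rightsquigarrow a$ gives $b \rightsquigarrow a$ by Lemma~\ref{lem:almostTran}.

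The remaining case $w \in V(a,b)$ is the main obstacle. Here $f_{t-1}$ crosses $f_t = wb$; its larger end cannot lie strictly between $w$ and $b$ (that region is contained in $V(a,b)$, contradicting the key observation), so $f_{t-1} = \alpha\beta$ with $\alpha \in V(a,b)$ its smaller end and $\beta \in V(b,y)$ its larger end. The edge $\alpha\beta$ together with $y \rightsquigarrow b$ (the vertices $\beta, y, \alpha, b$ appearing in counterclockwise cyclic order) gives $\beta \rightsquigarrow b$. One then descends along $f_1, \dots, f_{t-1}$: minimality of $t$ forbids the larger end of $f_{t-2}$ from being $b$, so $f_1, \dots, f_{t-2}$ is a strictly shorter crossing sequence from $y$ reaching a vertex of $V(b,y)$, and repeating this analysis---while tracking whether each endpoint encountered lies in $V(a,b)$, equals $b$, or lies in $V(b,y)$, and feeding the reachabilities $\beta \rightsquigarrow b$, $c \rightsquigarrow a$, $y \rightsquigarrow b$ into Lemma~\ref{lem:almostTran}---drives the configuration back into one of the two cases already settled. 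Making this descent terminate cleanly is the technical crux; I expect it, and not the applications of Lemma~\ref{lem:almostTran} themselves, to be the place that requires real care, after which Proposition~\ref{prop:partition} follows by recursing on the valid subsegments this lemma produces.
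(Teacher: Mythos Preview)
Your treatment of condition~(i) matches the paper's exactly, and your reduction of~(ii) to the three cases $w=a$, $w\prec a$, and $w\in V(a,b)$ is also the paper's approach. The issue is with the third case, which you call ``the main obstacle'' and leave unfinished: it is in fact vacuous, and seeing why is the whole content of the paper's argument.

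Recall what ``$f_{t-1}$ crosses $f_t$'' means in the rotated order where $y$ is least. Writing $f_{t-1}=\alpha\gamma$ with $\alpha\prec\gamma$ and $f_t=wb$ with $w\prec b$, the definition forces $\alpha\prec w\prec\gamma\prec b$. In particular the larger end $\gamma$ of $f_{t-1}$ lies strictly between $w$ and $b$. Your own key observation says $\gamma\notin V(a,b)$; combined with $\gamma\prec b$ this gives $\gamma\preceq a$, and then $w\prec\gamma\preceq a$ shows $w\prec a$. So you are always in your second case, and the descent you sketch is never needed. (This is why the paper does not take a \emph{minimum-length} crossing sequence: any one will do, since the monotonicity of larger ends along a crossing sequence is automatic from the definition.)
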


\begin{proof}
First suppose that condition (i) holds.
Then there are crossing sequences from $b$ to $x$ and from $y$ to $a$ and thus, using Lemma~\ref{lem:almostTran} if $b\neq y$ and $a\neq x$, also from $b$ to $a$.
So $V[a,b]$ is valid.

Conditions (ii) and (iii) are symmetric via reversing $\prec$.
So it suffices to consider the case that condition (ii) holds.
Then there are crossing sequences from $y$ to $a$ and from $y$ to $b$.
If the crossing sequence from $y$ to $b$ is a single edge, then, since there is a vertex in $V(b,y)$ which is right-reachable and by Lemma~\ref{lem:almostTran}, there is a crossing sequence from $b$ to $x$.
Then $b$ is right-reachable and $V[a,b]$ is valid by condition (i).

So there is a crossing sequence, say $e_1,\ldots,e_k$, from $y$ to $b$ with at least two edges.
Let $v$ be the end of $e_{k-1}$ such that $e_1,\ldots,e_{k-1}$ is a crossing sequence from $y$ to $v$ (so if $k=2$, then $v$ is the end of $e_{k-1}=e_1$ which is not $y$).
Then $v\notin V(a,b)$ since then $v$ would be left-reachable.
This implies that the end $u$ of $e_k$ which is not $b$ satisfies $u\notin V[a,y]$.
Thus, since $e_k$ is a crossing sequence from $b$ to $u$ and there is a crossing sequence from $y$ to $a$, by Lemma~\ref{lem:almostTran} there is a crossing sequence from $b$ to $a$.
So $V[a,b]$ is valid.
\end{proof}

We are ready to begin the proof of the main proposition.
Afterwards, we will quickly show how to apply it when there is no fixed precolouring.

\begin{proposition}
\label{prop:extend}
There is a polynomial-time algorithm which takes in an\/ $\cH$-free ordered graph\/ $(G,{\prec})$, vertices\/ $x$ and\/ $y$ such that\/ $V[x,y]$ is a valid segment, and a\/ $V[x,y]$-precolouring\/ $\phi$, and returns a\/ $V[x,y]$-extension of\/ $\phi$.
\end{proposition}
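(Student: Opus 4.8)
The plan is to prove Proposition~\ref{prop:extend} by a recursive algorithm whose recursion parameter is the number of \emph{uncoloured} interior vertices of $V[x,y]$, i.e.\ the number of vertices in $V(x,y)\setminus S(x,y)$. If this number is $0$, then $\phi$ already colours all of $V[x,y]$; since an induced ordered subgraph of a capped graph is capped, every colour class of $\phi$ meets $V[x,y]$ in a capped subgraph, so $\phi$ itself is the desired $V[x,y]$-extension and we return it. Otherwise, after permuting colour names we may assume, using condition~(iii) of a precolouring, that $\phi$ colours every vertex of $S(x,y)$ with colour~$1$ and that no exterior vertex adjacent to an interior vertex receives colour~$1$. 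If $S(x,y)=\emptyset$, then Lemma~\ref{lem:extInt} shows that no interior vertex of $V[x,y]$ is adjacent to an exterior vertex; this degenerate case is handled directly, colouring the induced subgraph on $V(x,y)$ in isolation (recursing on a valid subsegment of $G[V(x,y)]$ delimited by an edge, when $G[V(x,y)]$ has one, and otherwise colouring $V(x,y)$ trivially) and gluing colour class by colour class as in Lemma~\ref{lem:extends}. So assume $S(x,y)\neq\emptyset$.

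The core of the recursive step is a ``window decomposition'' of $V[x,y]$ in the spirit of the polygon sketch. Writing $x=p_0\prec p_1\prec\cdots\prec p_k=y$ for the vertices of $\{x,y\}\cup S(x,y)$, the uncoloured interior vertices split into the blocks $V(p_i,p_{i+1})$. Using left- and right-reachability, each block is further cut into \emph{subsegments}: roughly, consecutive left-reachable vertices delimit ``left'' subsegments and consecutive right-reachable vertices delimit ``right'' subsegments, the choice being made (with endpoints drawn from $\{x,y\}\cup S(x,y)$ and from the reachable vertices) so that every uncoloured interior vertex lies in the interior of exactly one subsegment. The crucial point is that each resulting subsegment $V[a,b]$ is again a \emph{valid} segment, which is exactly what Lemma~\ref{lem:valid}, cases (i)--(iii), is designed to deliver, with Lemma~\ref{lem:almostTran} used to splice the relevant crossing sequences. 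Since at least one endpoint of $V[a,b]$ is some $p_i$ or a boundary reachable vertex, $V[a,b]$ has strictly fewer interior vertices than $V[x,y]$, which makes the recursion terminate.

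Given the decomposition, I colour $S(a,b)$ of each ``left'' subsegment $V[a,b]$ with colour~$2$ and of each ``right'' subsegment with colour~$3$ (the two colours not used on $S(x,y)$), take $\psi$ to be the partial colouring of $V[a,b]$ that in addition inherits $\phi$ on $a$ and $b$ and colours no other interior vertex, and recurse on $(G,{\prec})$, $V[a,b]$, and $\psi$. To see that $\psi$ is a valid $V[a,b]$-precolouring: condition~(ii) is clear since $\phi$ colours no interior vertex of $V[a,b]$; condition~(i) holds because every colour class of $\psi$ is a subset of $S(a,b)\cup\{a,b\}$, which induces a capped subgraph by Lemma~\ref{lem:induceCapped}; and for condition~(iii), Lemma~\ref{lem:extInt} applied to $V[a,b]$ says an interior vertex of $V[a,b]$ with a neighbour outside $V[a,b]$ lies in $S(a,b)$, while such a neighbour is either an endpoint of $V[a,b]$, or a vertex of $\{x,y\}\cup S(x,y)$ (coloured~$1$), or an interior vertex of another subsegment of the \emph{opposite} type (coloured with the colour we did not use here) --- so the colour assigned to $S(a,b)$ is never forbidden. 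Combining the recursive outputs with $\phi$ yields a $3$-colouring of $V[x,y]$; Lemma~\ref{lem:induceCapped} handles each window, Lemma~\ref{lem:extends} applied along the window tree keeps the colour classes capped under gluing, and an observation analogous to the one in the polygon sketch --- controlling which windows can contain a pair of adjacent vertices (a parent--child or an opposite-sibling pair), so that such windows always carry different colours --- completes correctness. Polynomiality is immediate: the window tree has at most $|V(G)|$ nodes, and every step, in particular deciding reachabilities, runs in polynomial time exactly as in Proposition~\ref{prop:H-free-alg}.

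I expect the main obstacle to be the structural claim underpinning condition~(iii) and the window observation above: that the interiors of two subsegments of the \emph{same} type are never adjacent (equivalently, that the left/right classification is internally consistent, and that the subsegments really do partition all the uncoloured interior vertices). This is precisely where $\cH$-freeness is used: an offending edge between two ``left'' subsegments, together with the crossing sequences witnessing the left-reachability of the boundary vertices involved and an application of Lemma~\ref{lem:almostTran}, would produce crossing sequences in both directions between a non-adjacent pair of vertices, i.e.\ a copy of a graph in $\cH$ --- a contradiction. Turning this into a proof requires a somewhat delicate case analysis over the relative positions of the edge's endpoints and the subsegments' boundaries, plus a separate check of the boundary cases where a subsegment endpoint coincides with $x$, $y$, or some $p_i$; I anticipate that this bookkeeping, rather than any new idea, is the bulk of the work, and that similar reasoning also disposes of the degenerate $S(x,y)=\emptyset$ case.
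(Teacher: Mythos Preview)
Your outline for the case $S(x,y)\neq\emptyset$ is essentially the paper's: partition $V[x,y]$ into ``left'' and ``right'' valid subsegments using Lemma~\ref{lem:valid}, colour their strongly reachable interiors with the two unused colours, and recurse. Two points, however, are not right.

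First, your verification that the recursive colouring $\psi$ is a $V[a,b]$-precolouring is flawed at condition~(i). A precolouring is a partial colouring of the \emph{whole} graph, not only of $V[a,b]$: $\psi$ inherits colours on all of $\{x,y\}\cup S(x,y)$ and on vertices already treated by earlier recursive calls. So the sentence ``every colour class of $\psi$ is a subset of $S(a,b)\cup\{a,b\}$'' is false, and Lemma~\ref{lem:induceCapped} alone does not suffice. You need to maintain an inductive invariant that every colour class of the current partial colouring induces a capped subgraph, and to check that colouring the new endpoints $a,b$ (which may be freshly coloured) does not create a forbidden crossing pattern with previously coloured vertices; this is the content of the paper's Claim~\ref{claim:isPrecolouring} and is where Claim~\ref{clm:edges} is actually used, not merely in condition~(iii).

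Second, and more seriously, your handling of the case $S(x,y)=\emptyset$ does not work as stated. ``Recursing on a valid subsegment of $G[V(x,y)]$ delimited by an edge'' is problematic on two counts: an edge $uv$ inside $V(x,y)$ need not give a valid segment (you would need a crossing sequence from $v$ to $u$, which does not follow), and even if you found one valid subsegment you would still have to colour the interior vertices outside it. The paper uses a genuinely different device here: it greedily builds a chain $x=v_0\prec v_1\prec\cdots\prec v_{k+1}=y$ where $v_{i+1}$ is the largest vertex in $V(v_i,y]$ adjacent to something in $V(v_{i-1},v_i]$, colours all the $v_i$ with $\phi(x)$, and shows that each $V[v_i,v_{i+1}]$ is valid and that together they cover $V[x,y]$; the alternation of the other two colours on the successive $S(v_i,v_{i+1})$ then gives the recursive precolourings. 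This is not the same window decomposition as in the nonempty case and is not ``similar reasoning'' or mere bookkeeping --- it is the missing idea.
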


\begin{proof}
Throughout the proof we will implicitly use the fact that there is a polynomial-time algorithm to determine whether there is a crossing sequence from a vertex $u$ to a vertex $v$; see the proof of Proposition~\ref{prop:H-free-alg}.
So in particular we can find the sets $L[x,y)$, $R(x,y]$, and $S(x,y)$.
If all interior vertices are coloured, we just return $\phi$.
So suppose that there is an uncoloured interior vertex.
We split into cases and take care of the special case first.

\emph{Case 1.}\enspace
The set $S(x,y)$ is empty.

For the following definitions, it is convenient to add an edge between any pair of consecutive, non-adjacent vertices in $V[x,y]$; that is, if $a$ and $b$ are non-adjacent vertices such that $x\preceq a\prec b\preceq y$ and $V(a,b)$ is empty, then add an edge between $a$ and $b$.
This does not affect much since the edge $ab$ is not in any crossings, and we will remove these edges later.
Refer to Figure~\ref{fig:S-empty} for the following.

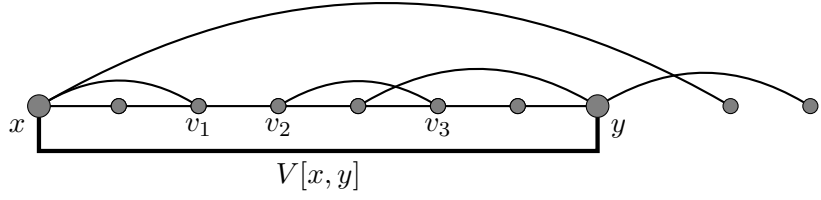
\begin{figure}[t]
\centering
\begin{tikzpicture}[scale=.7, every node/.style={inner sep=2, outer sep=0}, vtx/.style={draw, circle, fill=gray}]
  \node[vtx, inner sep=3, label=below left:$x$] (x) at (-6.5,0) {};
  \node[vtx, inner sep=3, label=below right:$y$] (y) at (4,0) {};
  \node (x1) at (-6.5,-.85) {};
  \node (y1) at (4,-.85) {};
  \node[label=below:{$V[x,y]$}] at (-1.25,-.85) {};
  \draw[ultra thick] (x) -- (x1.center) -- (y1.center) -- (y);
  \node[vtx] (a1) at (6.5,0) {};
  \node[vtx] (a2) at (8,0) {};
  \draw[thick] (x) to [bend left=30] (a1);
  \draw[thick] (y) to [bend left=30] (a2);
  \node[vtx] (vS) at (-5,0) {};
  \node[vtx, label=below:$v_1$] (vR) at (-3.5,0) {};
  \node[vtx, label=below:$v_2$] (e) at (-2,0) {};
  \node[vtx] (vL) at (2.5,0) {};
  \node[vtx] (b2) at (-.5,0) {};
  \node[vtx, label=below:$v_3$] (b3) at (1,0) {};
  \draw[thick] (x) -- (y);
  \draw[thick] (e) to [bend left=30] (b3);
  \draw[thick] (b2) to [bend left=30] (y);
  \draw[thick] (x) to [bend left=30] (vR);
  \node[vtx] at (vS) {};
  \node[vtx] at (vR) {};
  \node[vtx] at (vL) {};
  \node[vtx] at (b2) {};
  \node[vtx] at (b3) {};
  \node[vtx] at (e) {};
\end{tikzpicture}
\caption{The vertices $x=v_0\prec v_1\prec v_2\prec v_3\prec v_4=y$.}
\label{fig:S-empty}
\end{figure}

Set $v_0\coloneqq x$ and let $v_1$ be the largest neighbour of $x$ in $V[x,y)$.
The vertex $v_1$ exists since there is an uncoloured interior vertex and because of the added edges.
Now suppose that we have already defined vertices $x=v_0\prec v_1\prec\cdots\prec v_i\prec y$.
Then let $v_{i+1}$ be the largest neighbour of a vertex in $V(v_{i-1},v_i]$, subject to satisfying $v_{i+1}\in V(v_i,y]$.
Again, $v_{i+1}$ exists because of the added edges.
If $v_{i+1}=y$ then we are done; otherwise continue the process.
In the end we have defined vertices $x=v_0\prec v_1\prec\cdots\prec v_{k+1}=y$, with $k\geq 1$.

Now delete the added edges so that we are again considering the original ordered graph, and colour all vertices $v_i$, with $i\in\{1,\ldots,k\}$, the colour $\phi(x)$.
Call the new colouring $\phi'$.
There are a few things to observe.
First, for each $i\in\{0,\ldots,k\}$, if $V(v_i,v_{i+1})$ is non-empty, then $V[v_i,v_{i+1}]$ is valid.
Second, each colour class $X$ of $\phi'$ induces a capped subgraph (note that by Lemma~\ref{lem:extends}, it suffices to observe that the subgraph induced by $X\cap V[x,y]$ is capped).

There are two colours besides $\phi(x)$, say ``blue'' and ``green''.
We colour all vertices which are strongly reachable from a segment $V[v_i,v_{i+1}]$ ``blue'' if $i$ is odd and ``green'' if $i$ is even.
If necessary, we permute the colours ``blue'' and ``green'' so that the strongly reachable vertices of $V[v_{k-1},v_k]$ are not given colour $\phi(y)$.
Call the new colouring $\phi''$.
It can be verified that for each segment $V[v_i,v_{i+1}]$, no interior vertex is adjacent to an exterior vertex of the same colour (note that all such interior vertices are strongly reachable from $V[v_i,v_{i+1}]$ by Lemma~\ref{lem:extInt} and are therefore actually coloured).
By Lemmas~\ref{lem:induceCapped} and~\ref{lem:extends}, each colour class of $\phi''$ induces a capped subgraph.

So, by recursion, for each $i\in\{0,\ldots,k\}$ such that $V(v_i,v_{i+1})$ is non-empty, we can find a $V[v_i,v_{i+1}]$-extension of $\phi''$.
By Lemma~\ref{lem:extends}, each colour class of this extension induces a capped subgraph, and so we may continue.
This completes the case that $S(x,y)$ is empty.

\emph{Case 2.}\enspace
The set $S(x,y)$ is non-empty.

It is convenient to call two segments \emph{internally disjoint} if they have at most one vertex in common, which is an end of both segments.
We define sets $\cL$ and $\cR$ of valid segments so that
\begin{enumeratei}
\item every pair of segments in $\cL$ ($\cR$, respectively) is internally disjoint,
\item $V[x,y]=\bigcup_{V[a,b]\in\cL\cup\cR}V[a,b]$, and
\item no segment in $\cL$ ($\cR$, respectively) has an interior vertex which is right-reachable (left-reachable, respectively) from $V[x,y]$.
\end{enumeratei}
Let $s_1\prec\cdots\prec s_k$ denote the strongly reachable vertices, with $k\geq 1$, and set $s_0\coloneqq x$ and $s_{k+1}\coloneqq y$.
For the following definitions, refer to Figure~\ref{fig:LandR}.

\begin{figure}[t]
\centering
\begin{tikzpicture}[scale=.7, every node/.style={inner sep=2, outer sep=0}, vtx/.style={draw, circle, fill=gray}, arr/.style={->, thick}]
  \def\dif{1.5}
  \def\bot{.8}
  \def\L{.5}
  \def\H{.65}
  \def\save{.1}
  \node[vtx, inner sep=3, label=left:{$x=s_0$}, label=below:$L$, fill=red] (x) at (-7.5,0) {};
  \node[vtx, inner sep=3, label=right:{$s_2=y$}, label=below:$R$, fill=green] (y) at (7.5,0) {};
  \node[vtx, label=below:$s_1$, fill=red] (s1) at (0,0) {};
  \node[vtx, label=below:$L$] (v1) at (-7.5+\dif,0) {};
  \node[vtx, label=below:$R$] (v2) at (-7.5+2*\dif,0) {};
  \node[vtx, label=below:$R$] (v3) at (-7.5+3*\dif,0) {};
  \node[vtx, label=below:$L$] (v4) at (-7.5+4*\dif,0) {};
  \node[vtx, label=below:$L$] (v5) at (7.5-4*\dif,0) {};
  \node[vtx, label=below:$R$] (v6) at (7.5-3*\dif,0) {};
  \node[vtx, label=below:$L$] (v7) at (7.5-2*\dif,0) {};
  \node[vtx, label=below:$R$] (v8) at (7.5-\dif,0) {};
  \draw[ultra thick] (v1) -- ++ (0,\H) -- ++ (3*\dif,0) -- (v4);
  \draw[ultra thick] {(x) ++ (0,-\bot)} -- ++ (0,-\L) -- ++ (2*\dif,0) -- ++ (0,\L);
  \draw[ultra thick] {(v4) ++ (0,-\bot)} -- ++ (0,-\L) -- ++ (\dif-\save,0) -- ++ (0,\L);
  \draw[ultra thick] {(s1) ++ (\save,-\bot)} -- ++ (0,-\L) -- ++ (2*\dif-2*\save, 0) -- ++ (0,\L);
  \draw[ultra thick] {(v6) ++ (\save,-\bot)} -- ++ (0,-\L) -- ++ (2*\dif,0) -- ++ (0,\L);
  \draw[ultra thick] (v7) -- ++ (0,\H) -- ++ (2*\dif, 0) -- (y);
  \node[label=above:$\cR$] (R) at (0,2) {};
  \draw[arr] (R) -- (-2.5*\dif,\H+.2);
  \draw[arr] (R) -- (4*\dif,\H+.2);
  \node[label=below:$\cL$] (L) at (0,-2-\bot) {};
  \draw[arr] (L) -- (-4*\dif,-\bot-\L-.2);
  \draw[arr] (L) -- (-.5*\dif,-\bot-\L-.2);
  \draw[arr] (L) -- (\dif,-\bot-\L-.2);
  \draw[arr] (L) -- (3*\dif,-\bot-\L-.2);
\end{tikzpicture}
\caption{A depiction of the sets $L[x,y)\cup R(x,y]$, $\cL$, and $\cR$.}
\label{fig:LandR}
\end{figure}

First, for each $r\in V(s_0,s_1)$ which is right-reachable, let $\ell_1,\ell_2\in V[s_0,s_1]$ be left-reachable vertices with $\ell_1\prec r\prec\ell_2$; choose $\ell_1$ as large as possible and $\ell_2$ as small as possible.
We add the segment $V[\ell_1,\ell_2]$ to $\cR$ (if it was not already in $\cR$).
Note that $\ell_1$ and $\ell_2$ exist since $k\geq 1$.
Furthermore, $V[\ell_1,\ell_2]$ is valid; either $\ell_2$ is right-reachable and $V[\ell_1,\ell_2]$ satisfies condition~(i) of Lemma~\ref{lem:valid}, or $\ell_2\prec s_1$ and $V[\ell_1,\ell_2]$ satisfies condition~(ii) of Lemma~\ref{lem:valid}.

Now we add segments to $\cL$ to ``cover the gaps in $V[s_0,s_1]$ between segments in $\cR$''.
We add one segment to $\cL$ for each maximal segment $V[\ell_2',\ell_1']\subseteq V[s_0,s_1]$ which is internally disjoint from all segments in $\cR$ (observe that either $\ell_2'=s_0$ or $\ell_2'$ is the largest end of a segment in $\cR$, and similarly for $\ell_1'$).
Let $r_2$ be the smallest right-reachable vertex in $V[s_0,s_1]$ with $\ell_1'\preceq r_2$.
Such a vertex exists.
We add the segment $V[\ell_2',r_2]$ to $\cL$.
The segment $V[\ell_2',r_2]$ is valid since it satisfies condition~(i) of Lemma~\ref{lem:valid}.

This defines the segments in $\cL\cup\cR$ which are contained in $V[s_0,s_1]$.
For each $i\in\{1,\ldots,k-1\}$, we use the same procedure to define the segments in $\cL\cup\cR$ which are contained in $V[s_i,s_{i+1}]$.
For $V[s_k,s_{k+1}]$, however, this approach may not work, as condition~(ii) of Lemma~\ref{lem:valid} need not hold.
So within this segment we switch the roles of $\cL$ and $\cR$ (first add segments to $\cL$ by considering vertices $\ell\in V(s_k,s_{k+1})$ which are left-reachable, and so on).
The segments defined this way are valid by conditions~(i) and~(iii) of Lemma~\ref{lem:valid}.
This completes the definitions of $\cL$ and $\cR$; it can be verified that they satisfy conditions~(i)--(iii) above.
Furthermore, we have the following claim about where ``interesting'' edges lie.

\begin{claim}
\label{clm:edges}
If $V[u,v]$ is a segment contained in $V[x,y]$ such that $uv\in E(G)$ and $V(u,v)$ contains an end of a segment in $\cL\cup\cR$, then $u$ is not interior to any segment in $\cL$ and $v$ is not interior to any segment in $\cR$.
\end{claim}

\begin{proof}
Up to the symmetry obtained by reversing $\prec$, it suffices to prove that $v$ is not interior to any segment in $\cR$.
So, going for a contradiction, suppose that $v\in V(a,b)$ for some segment $V[a,b]\in\cR$.
Since $a$ is left-reachable, there are crossing sequences from $y$ to $a$ and from $u$ to $v$.
If $u\prec a$, then by Lemma~\ref{lem:almostTran}, there is a crossing sequence from $y$ to $v$ and $v$ is left-reachable.
This contradicts condition~(iii) for $\cR$.

So $a\preceq u$.
Since $V(u,v)$ contains an end $r$ of a segment in $\cL\cup\cR$ and segments in $\cR$ are internally disjoint, $r$ must be an end of a segment in $\cL$.
In fact $r$ must be the larger end of a segment in $\cL$; so $x\prec u\prec r\prec v$ and $r$ is right-reachable from $V[x,y]$.
Then, because there are crossing sequences from $u$ to $v$ and from $r$ to $x$, by Lemma~\ref{lem:almostTran} there is a crossing sequence from $u$ to $x$ and $u$ is right-reachable.
This contradicts condition~(iii) for $\cL$.
\end{proof}

Now we complete the algorithm for colouring.
Suppose that so far we have a colouring $\hat{\phi}$, where originally we take $\hat{\phi}\coloneqq\phi$.
Then take an arbitrary segment $V[a,b]\in\cL\cup\cR$ which has not yet been considered.
If $a$ or $b$ is uncoloured, then colour it $L$ if it is in $L[x,y)$ and $R$ if it is in $R(x,y]$.
Furthermore, colour all vertices in $S(a,b)$ colour $L$ if $V[a,b]\in\cL$ and colour $R$ if $V[a,b]\in\cR$.
Call the new colouring $\hat{\phi}'$.
We will show that $\hat{\phi}'$ is a $V[a,b]$-precolouring.
Thus we can recursively find a $V[a,b]$-extension of $\hat{\phi}'$.
Take the extension, uncolour any vertices which are interior to a segment in $\cL\cup\cR$ which has not yet been considered, and continue the process by updating $\hat{\phi}$.
By condition~(ii) above (that $\cL\cup\cR$ ``covers'' $V[x,y]$) and by Lemma~\ref{lem:extends}, after every segment in $\cL\cup\cR$ has been considered, the updated $\hat{\phi}$ is a $V[x,y]$-extension of $\phi$; so we then return~$\hat{\phi}$.
Observe that the running time is polynomial.

Consider a step in the algorithm where we began with $\hat{\phi}$ and then obtained $\hat{\phi}'$ by considering $V[a,b]\in\cL\cup\cR$.
We just need to show the following.

\begin{claim}
\label{claim:isPrecolouring}
The partial $3$-colouring $\hat{\phi}'$ is a $V[a,b]$-precolouring.
\end{claim}

\begin{proof}
Up to the symmetry obtained by reversing $\prec$, we can assume that $V[a,b]\in\cR$.
We know that $\hat{\phi}'$ colours $S(a,b)\cup\{x,y\}$ and colours no other interior vertices of $V[a,b]$, and that $\hat{\phi}'$ gives every vertex in $S(a,b)$ the colour $R$.
So, by Lemmas \ref{lem:induceCapped} and~\ref{lem:extends}, it suffices to show that
\begin{enumeratei}
\item no exterior vertex $u$ of $V[a,b]$ which is adjacent to an interior vertex $v$ of $V[a,b]$ receives colour $R$, and
\item for each colour class $X$ of $\hat{\phi}'$, the set $X\setminus V(a,b)$ induces a capped subgraph.
\end{enumeratei}

First we prove (i) by contradiction.
We know that $u\in V[x,y)$ since otherwise $v$ would be left-reachable.
Then by Claim~\ref{clm:edges} in fact $u\in V(b,y)$.
If $u$ was coloured due to being interior to a segment $V[a',b']\in\cL\cup\cR$, then, since $V[a,b]$ and $V[a',b']$ are internally disjoint, $u\in S(a',b')$.
Thus $V[a',b']\in\cR$, but this is a contradiction to Claim~\ref{clm:edges} (where the roles of $u$ and $v$ are reversed).
So $u$ must have been coloured due to being the end of a segment in $\cL\cup\cR$.
So $u$ is right-reachable but not left-reachable.
But there is a left-reachable vertex $\ell$ with $v\prec\ell\prec u$ (possibly $\ell=b$), which contradicts the fact that $u$ is not left-reachable.

Now we prove (ii).
By induction, we can assume that every colour class of $\hat{\phi}$ induces a capped subgraph.
So it suffices to prove that for any $v\in\{a,b\}$ which is newly coloured by $\hat{\phi}'$, there is no edge $e = uv$ which is in a crossing with an edge $f$ in the subgraph induced by $X\setminus V(a,b)$.
Suppose for the sake of contradiction that $e$ and $f$ do exist.
Since $v$ is newly coloured, $v$ is not in any segment in $\cL\cup\cR$ that has already been considered.
This implies, by considering why $u$ was coloured, that if $\hat{\phi}'(u)=R$ ($L$, respectively), then $u$ is right-reachable (left-reachable, respectively).

Suppose that $\hat{\phi}'(u)=R$.
By possibly switching the labels of $u$ and $v$, we have adjacent vertices $v'\prec u'$ such that both $v'$ and $u'$ are right-reachable but not left-reachable.
So no vertex in $V[v',u']$ is left-reachable, and thus there is a segment $V[a', b']\in\cL\cup\cR$ which contains both $v'$ and $u'$.
Since $f$ is crossing with $e=v'u'$ and the ends of $f$ are already coloured by $\hat{\phi}$, the segment $V[a',b']$ was already considered.
But this is a contradiction since we showed that $v\notin V[a',b']$.
A symmetric argument works in the case that $\hat{\phi}'(u)=L$.
This completes the proof of Claim~\ref{claim:isPrecolouring}.
\end{proof}

Claim~\ref{claim:isPrecolouring} above completes the proof of Proposition~\ref{prop:extend}.
\end{proof}

\begin{proof}[Proof of Proposition \ref{prop:partition}]
Let $(G',{\prec'})$ be the ordered graph obtained from $(G,{\prec})$ by adding a new smallest vertex $x$, a new largest vertex $y$, and the edge $xy$.
Let $\phi'$ be any partial colouring which only colours $x$ and $y$.
Then $(G',{\prec'})$ is $\cH$-free, $V[x,y]$ is a valid segment, and $\phi'$ is a $V[x,y]$-precolouring.
Therefore, by Proposition~\ref{prop:extend}, we can find (in polynomial time) a $3$-colouring of $(G,{\prec})$ such that each colour class induces a capped subgraph.
\end{proof}

\section{Colouring capped graphs}
\label{sec:capped}

This section is devoted to the proof of Theorem~\ref{thm:capped} on colouring capped graphs.
The proof relies on decompositions; a \emph{decomposition} of an ordered graph $(G,{\prec})$ is a collection of subgraphs such that every edge of $G$ belongs to exactly one subgraph in the collection.
For a graph $G$ and a set $F\subseteq E(G)$, we write $G[F]$ and $G-F$ for the graph obtained from $G$ by keeping/deleting (respectively) the edges in $F$.

\begin{proposition}
\label{prop:mainDecomp}
There is a polynomial-time algorithm which takes in a capped graph\/ $(G,{\prec})$ and returns its clique number\/ $\omega$ and a decomposition of\/ $(G,{\prec})$ into\/ $\omega-1$ triangle-free capped graphs.
If\/ $(G,{\prec})$ is additionally ordered-hole-free, then so is each graph in the decomposition.
\end{proposition}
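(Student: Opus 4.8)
The plan is to prove Proposition~\ref{prop:mainDecomp} by induction on $\omega$, \emph{peeling off} one triangle-free capped subgraph at each step so that the clique number drops. The base case $\omega=2$ is immediate: a capped graph with clique number $2$ is triangle-free, so it is its own decomposition into $\omega-1=1$ piece, and it is ordered-hole-free if the input is. For the inductive step it suffices to produce, for a capped graph $(G,{\prec})$ with clique number $\omega\geq 3$, a subgraph $H$ with (a)~$H$ triangle-free and capped; (b)~$G-E(H)$ capped with clique number at most $\omega-1$; and (c)~if $G$ is ordered-hole-free then so are $H$ and $G-E(H)$. Applying the induction hypothesis to $G-E(H)$ and adding $H$ yields the decomposition (padding with empty graphs if the clique number drops by more than one), and performing this step by step gives the algorithm, provided each step is polynomial.

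For an edge $e=uv$ with $u\prec v$, say $uv$ is the \emph{top edge} of a clique $Q$ if $v=\max Q$ and $u$ is the second-largest vertex of $Q$, and let $w(uv)$ be the maximum size of a clique with top edge $uv$ (so $2\leq w(uv)\leq\omega$). The engine of the argument is the following use of the capped property: \emph{if $u\prec v\prec w$ is a triangle of $G$ and $Q$ is a clique with top edge $uv$, then $Q\cup\{w\}$ is a clique with top edge $vw$}. Indeed, writing $Q=Q'\cup\{u,v\}$ with every vertex of $Q'$ below $u$, each $q\in Q'$ has $q\prec u\prec v\prec w$ with $qv,uw\in E(G)$, hence $qw\in E(G)$ since $G$ is capped; together with $uw,vw\in E(G)$ this makes $Q'\cup\{u,v,w\}$ a clique with top edge $vw$. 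In particular $w(vw)\geq w(uv)+1$, and the same $Q'$ applied to $Q'\cup\{u,w\}$ gives $w(uw)\geq w(uv)$. So $w$ is never constant on a triangle; thus each level set $\{e\in E(G):w(e)=k\}$ is triangle-free, and the top edge of any $\omega$-clique has $w$-value $\omega$, so the level set at $\omega$ meets every $\omega$-clique.

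We take $H$ to be the smallest edge set that contains every edge of $w$-value $\omega$ (equivalently, every top edge of an $\omega$-clique) and is closed under the forcing rule in both directions: whenever $a\prec b\prec c\prec d$, if $ad\in H$ and $ac,bd\in E(G)$ then $ac,bd\in H$, and if $ac,bd\in H$ then $ad\in H$ (note $ad\in E(G)$ automatically since $G$ is capped). This closure exists and is computable in polynomial time; the clique number $\omega$ and the maximum cliques relevant above are also obtained in polynomial time by a dynamic program along the order. Closure under the second rule makes $H$ capped; since $H$ contains a top edge of every $\omega$-clique, $G-E(H)$ has clique number at most $\omega-1$; and $G-E(H)$ is capped essentially by design, since a violation would mean $ac,bd\in E(G)\setminus E(H)$ cross while $ad\in E(H)$, which the first closure rule forbids. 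Ordered-hole-freeness of $H$ and of $G-E(H)$ will need a separate argument: an ordered hole on $c_1\prec\cdots\prec c_m$ in $G-E(H)$ forces every chord of $\{c_1,\dots,c_m\}$ in $G$ into $E(H)$, and one plays this against the description of $H$, the capped property, and ordered-hole-freeness of $G$ to reach a contradiction.

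The main obstacle I anticipate is proving that $H$ remains triangle-free after the closure, since $H$ is not simply a union of $w$-level sets. The intended approach is to record, for each edge placed in $H$ by a closure step, the longer edge that produced it; spans strictly increase along such production chains, which terminate at top edges of $\omega$-cliques. Given a putative triangle in $H$, one then examines its longest edge together with its production chain and, by iterating the clique-extension lemma above, builds a clique strictly larger than the maximum clique whose top edge started the chain---a contradiction. Making this case analysis go through cleanly (in particular handling how the two closure rules interact around a triangle) is where I expect the real work to lie.
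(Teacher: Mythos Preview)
Your inductive framework and your clique-extension lemma (if $u\prec v\prec w$ is a triangle and $Q$ has top edge $uv$, then $Q\cup\{w\}$ is a clique) are exactly right, and indeed this is the engine of the paper's proof as well. But your specific choice of the layer $H$---top edges of $\omega$-cliques, closed under two rules---creates real difficulties that you correctly identify and do not resolve. The ``production chain'' sketch for triangle-freeness is broken as stated: Rule~2 manufactures a \emph{longer} edge $ad$ from shorter edges $ac,bd$, so spans do not strictly increase along production chains, and there is no evident well-founded order on which to run the argument. The ordered-hole-free claims are left entirely as promissory notes. These are genuine gaps, not just missing details.

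The paper avoids all of this by peeling from the \emph{bottom} instead of the top, with no closure at all. Call an edge $uv$ (with $u\prec v$) \emph{triangle-crossed} if there is a triangle $\{x,y,v\}$ with $x,y\preceq u$; let $F$ be the set of edges that are \emph{not} triangle-crossed, and take this as your triangle-free layer. Two one-line monotonicity facts do all the work: (i) if $a\prec b\prec c$, $ac,bc\in E(G)$ and $ac$ is triangle-crossed, then so is $bc$ (same witnessing triangle), and (ii) if $a\prec b\prec c$, $ab,ac\in E(G)$ and $ab$ is triangle-crossed, then so is $ac$ (apply capped to push the witnessing triangle from $b$ to $c$). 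Fact~(i) makes $(G[F],{\prec})$ capped and ordered-hole-free whenever $(G,{\prec})$ is; fact~(ii) does the same for $(G-F,{\prec})$. Triangle-freeness of $G[F]$ is immediate: in any triangle $a\prec b\prec c$ the edge $bc$ is triangle-crossed. Finally, your own clique-extension lemma, applied to the witnessing triangle for the bottom edge of a clique in $G-F$, shows that $\omega(G-F)=\omega(G)-1$ exactly. No closure, no case analysis, and the polynomial-time computation of $\omega$ falls out for free as the number of peels.
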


\begin{proof}
If $G$ is triangle-free, then we return its clique number ($1$ or $2$) and the decomposition consisting of $(G,{\prec})$ itself.
Thus assume henceforth that $G$ is not triangle-free.
We say that an edge $uv\in E(G)$ with $u\prec v$ is \emph{triangle-crossed} if $v$ belongs to a triangle with vertices $x$ and $y$ such that $x,y\preceq u$.
Let $F$ be the set of all edges that are not triangle-crossed (see Figure~\ref{fig:decomp}).

\begin{figure}[t]
\centering
\begin{tikzpicture}[yscale=.8, xscale=.65, every node/.style={inner sep=2, outer sep=0, draw, circle, fill=gray}]
  \node[label=below:$6$] (A6) at (7.5,0) {};
  \node[label=below:$5$] (A5) at (4.5,0) {};
  \node[label=below:$4$] (A4) at (1.5,0) {};
  \node[label=below:$3$] (A3) at (-1.5,0) {};
  \node[label=below:$2$] (A2) at (-4.5,0) {};
  \node[label=below:$1$] (A1) at (-7.5,0) {};
  \draw[ultra thick] (A1) to (A2);
  \draw[ultra thick] (A2) to (A3);
  \draw[gray!80, ultra thick] (A3) to (A4);
  \draw[gray!40, ultra thick] (A4) to (A5);
  \draw[gray!40, ultra thick] (A5) to (A6);
  \draw[ultra thick] (A1) to[bend left=30] (A6);
  \draw[ultra thick] (A1) to[bend left=30] (A4);
  \draw[ultra thick] (A1) to[bend left=30] (A5);
  \draw[gray!80, ultra thick] (A2) to[bend left=30] (A4);
  \draw[gray!80, ultra thick] (A2) to[bend left=30] (A5);
  \draw[gray!80, ultra thick] (A2) to[bend left=30] (A6);
  \draw[gray!80, ultra thick] (A3) to[bend left=30] (A5);
  \foreach\i in {1,...,6}{\node at (A\i) {};}
\end{tikzpicture}
\caption{The decomposition from Proposition~\ref{prop:mainDecomp}, where darker edges are removed first.}
\label{fig:decomp}
\end{figure}
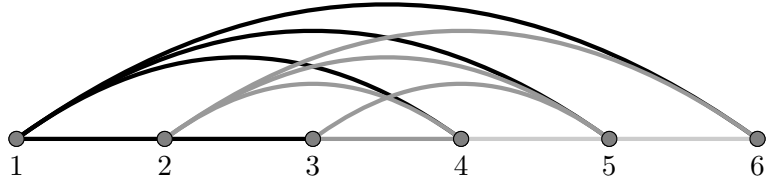

If $a,b,c$ are vertices of $G$ such that $a\prec b\prec c$, $ac,bc\in E(G)$, and $ac$ is triangle-crossed, then $bc$ is triangle-crossed.
This implies that $(G[F],{\prec})$ is capped and is ordered-hole-free if $(G,{\prec})$ is ordered-hole-free.
Furthermore, if vertices $a,b,c$ with $a\prec b\prec c$ form a triangle in $G$, then $bc$ is triangle-crossed.
So $G[F]$ is triangle-free.
Moreover, we have the following.

\begin{claim}
The graph $(G-F,{\prec})$ is capped and has clique number exactly one less than the clique number of $(G,{\prec})$.
Furthermore, if $(G,{\prec})$ is ordered-hole-free, then so is $(G-F,{\prec})$.
\end{claim}

\begin{proof}
Let $\omega$ denote the clique number of $(G,{\prec})$.
If $a,b,c$ are vertices of $G$ such that $a\prec b\prec c$, $ab,ac\in E(G)$, and $ab$ is triangle-crossed, then $ac$ is triangle-crossed.
So $(G-F,{\prec})$ is capped and is ordered-hole-free if $(G,{\prec})$ is.
Furthermore, the clique number of $(G-F,{\prec})$ is at least $\omega-1$, because every edge of a clique in $(G,{\prec})$ that is not incident to the smallest vertex of the clique is triangle-crossed.
Now, suppose that $Q\subseteq V(G)$ is a clique in $G-F$, and let $u$ and $v$ be the two smallest vertices of $Q$, with $u\prec v$.
Then $v$ is in a triangle of $G$ with vertices $x$ and $y$ such that $x\prec y\preceq u$.
It follows that $(Q\setminus\{u\})\cup\{x,y\}$ is a clique in $G$.
This shows that the clique number of $(G-F,{\prec})$ is at most $\omega-1$, as desired.
\end{proof}

To conclude, the algorithm proceeds by continuing with $(G-F,{\prec})$.
The clique number is the number of subgraphs in the decomposition plus one.
\end{proof}

\begin{proof}[Proof of Theorem \ref{thm:capped}]
Let $\omega\geq 2$ be the clique number of $G$, and let $\{(G_i,{\prec})\}_{1\leq i<\omega}$ be a decomposition of $(G,{\prec})$ into $\omega-1$ triangle-free capped subgraphs as in Proposition~\ref{prop:mainDecomp}.
Fix an index $i$ with $1\leq i<\omega$.
If $(G,{\prec})$ is ordered-hole-free, then let $F_i=\emptyset$, and otherwise let $F_i$ be the set of edges of $(G_i,{\prec})$ which are not crossed in $(G_i,{\prec})$.
An ordered graph is \emph{outerplanar} if it has no crossing pair of edges.

\begin{claim}
\label{claim:outerplanar}
The ordered graph $(G[F_i],{\prec})$ is outerplanar, and $(G_i-F_i,{\prec})$ is both capped and ordered-hole-free.
\end{claim}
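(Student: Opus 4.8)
The outerplanarity of $(G[F_i],{\prec})$, and the whole statement in the case that $(G,{\prec})$ is ordered-hole-free, come essentially for free, so I would dispose of these first. If $(G,{\prec})$ is ordered-hole-free then $F_i=\emptyset$, so $(G[F_i],{\prec})$ has no edges and in particular no crossing pair of edges, while $(G_i-F_i,{\prec})=(G_i,{\prec})$ is capped and, by Proposition~\ref{prop:mainDecomp}, ordered-hole-free. If $(G,{\prec})$ is not ordered-hole-free, then $F_i$ consists of the edges of $G_i$ that are not crossed in $G_i$; two such edges cannot form a crossing pair, as then each of them would be crossed, so $(G[F_i],{\prec})$ has no crossing pair of edges and is outerplanar. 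It then remains to treat $(G_i-F_i,{\prec})$ in this case, and here the point to keep in mind is that its edge set is exactly the set of edges of $G_i$ that are crossed in $G_i$.

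For cappedness, take $a\prec b\prec c\prec d$ with $ac,bd\in E(G_i-F_i)$. Since $ac$ crosses $bd$, cappedness of $(G_i,{\prec})$ gives $ad\in E(G_i)$, so it is enough to show that $ad$ is crossed in $G_i$, whence $ad\notin F_i$. Suppose not. Then every vertex of $V(a,d)$ has all of its $G_i$-neighbours in $V[a,d]$, for otherwise the edge to an outside neighbour would cross $ad$. Now $ac$ and $bd$ lie in $G_i-F_i$, hence are crossed, so there are edges $e$ crossing $ac$ and $f$ crossing $bd$; the constraint just derived forces $e$ to have both ends in $V[a,d]$ with one of them in $V(a,c)$, and likewise $f$ to have both ends in $V[a,d]$ with one of them in $V(b,d)$. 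Feeding $e$ and $f$ back into cappedness of $G_i$ — a crossing of $ac$ by $e=pq$ with $a\prec p\prec c\prec q$ produces the edge $aq$, and a crossing of $bd$ by $f=p'q'$ with $p'\prec b\prec q'\prec d$ produces the edge $p'd$ — and then using that $G_i$ is triangle-free, I would derive a contradiction. I expect this crossing-edge bookkeeping to be the main obstacle: one must track the relative order of $p,q,p',q'$ and of the edges they produce, and it is the triangle-free hypothesis (together with the fact that $G_i$ arises from the decomposition of Proposition~\ref{prop:mainDecomp} rather than being an arbitrary triangle-free capped graph) that should make the cases close.

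For ordered-hole-freeness, suppose toward a contradiction that $c_1\prec\dots\prec c_k$ with $k\geq 4$ induce an ordered hole in $(G_i-F_i,{\prec})$; then every hole edge is crossed in $G_i$, whereas every chord of the hole fails to be an edge of $G_i-F_i$. First I would apply triangle-freeness of $G_i$ to the long edge $c_1c_k$ against $c_1c_2$ and against $c_{k-1}c_k$ to see that $c_2c_k$ and $c_1c_{k-1}$ are not edges of $G_i$, and more generally to determine which chords can be edges of $G_i$ at all (any such chord must then lie in $F_i$). Then, examining an edge of $G_i$ that crosses $c_1c_k$ — which exists since $c_1c_k\notin F_i$ — and confronting it with the hole structure, with cappedness, and with triangle-freeness of $G_i$, I would in each case produce either a triangle in $G_i$, or a chord of the hole lying in $G_i-F_i$, or an uncrossed hole edge, each contradicting the setup. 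The case $k=4$, where the only chords are $c_1c_3$ and $c_2c_4$ and both are immediately forced to be non-edges of $G_i$, should be singled out as the base case. As with cappedness, the delicate step here is the same kind of crossing-edge accounting, and this is where the bulk of the argument will lie.
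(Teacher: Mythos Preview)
Your treatment of outerplanarity and of the ordered-hole-free case matches the paper's. The substantive divergence is in how cappedness and ordered-hole-freeness of $G_i-F_i$ are argued when $(G,{\prec})$ is not ordered-hole-free.

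The paper does none of the bookkeeping you propose. It asserts a single observation about $G_i$: if $a\prec b\prec c$ with $ab,ac\in E(G_i)$ and $ab$ is crossed in $G_i$, then $ac$ is crossed. Granting this, cappedness of $G_i-F_i$ is one line (apply it to $(a,c,d)$ whenever $ac,bd\in E(G_i-F_i)$ with $a\prec b\prec c\prec d$), and so is the reduction ``an ordered hole of $G_i-F_i$ is already induced in $G_i$'' (apply it to $(c_i,c_{i+1},c_j)$ for any would-be chord $c_ic_j\in F_i$). The paper then disposes of an ordered hole in $G_i$ by looking at an edge crossing $c_{k-2}c_{k-1}$, not $c_1c_k$ as you do: its smaller end must lie below $c_1$ (otherwise cappedness of $G_i$ manufactures a chord of the hole), and then cappedness against $c_{k-2}c_{k-1}$ and against $c_1c_k$ forces a triangle on that end together with $c_{k-1}$ and $c_k$.

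There is, however, a genuine obstruction to your cappedness argument that no amount of case analysis will remove: the assertion that $G_i-F_i$ is capped fails for an arbitrary triangle-free capped $G_i$. Take $V(G_i)=\{1,2,3,4\}$ with $E(G_i)=\{13,14,24\}$; this is triangle-free and capped, $F_i=\{14\}$, and $G_i-F_i$ has edge set $\{13,24\}$, which is not capped. In your notation $(a,b,c,d)=(1,2,3,4)$, the only candidates for $e$ and $f$ are $24$ and $13$ themselves, cappedness of $G_i$ recovers only $aq=p'd=14$, and no triangle materialises. Adjoining an ordered $C_4$ on four new largest vertices makes the ambient $G$ non-ordered-hole-free without disturbing any of this, so neither the ``$G$ not ordered-hole-free'' hypothesis nor the decomposition structure you invoke in your hedge rescues the statement. (The same four-vertex example refutes the paper's one-line observation as literally written, so the difficulty is not peculiar to your route.)
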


\begin{proof}
We can assume that $(G,{\prec})$ is not ordered-hole-free.
That $(G[F_i],{\prec})$ is outerplanar is clear from the definition of $F_i$.
If $a,b,c$ are vertices such that $a\prec b\prec c$, $ab,ac\in E(G_i)$, and $ab$ is crossed, then $ac$ is crossed.
This implies that the ordered graph $(G_i-F_i,{\prec})$ is capped and every ordered hole in it is an ordered hole in $(G_i,{\prec})$.
Suppose for the sake of contradiction that vertices $c_1\prec\cdots\prec c_k$ induce an ordered hole in $(G_i-F_i,{\prec})$ and thus in $(G_i,{\prec})$.
Since the edge $c_{k-2}c_{k-1}$ is crossed, there is an edge $xy\in E(G_i)$ with $x\prec c_{k-2}\prec y\prec c_{k-1}$.
It follows that $x\prec c_1$, as $c_1,\ldots,c_k$ induce an ordered hole in $(G_i,{\prec})$.
We conclude that $x$, $c_{k-1}$, and $c_k$ form a triangle in $G_i$.
This contradiction shows that $(G_i-F_i,{\prec})$ is ordered-hole-free.
\end{proof}

\begin{claim}
\label{claim:triangleFree}
There is a $4$-colouring of $G_i-F_i$, which can be computed in polynomial time.
\end{claim}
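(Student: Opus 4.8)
The plan is as follows. First recall what we already know about $H:=G_i-F_i$: it is a subgraph of the triangle-free graph $G_i$ produced by Proposition~\ref{prop:mainDecomp}, hence triangle-free, and by Claim~\ref{claim:outerplanar} it is both capped and ordered-hole-free. So it suffices to prove the self-contained statement in which all of the work lies: \emph{every triangle-free capped ordered-hole-free ordered graph has chromatic number at most $4$, with a proper $4$-colouring computable in polynomial time.} I expect this to be the main obstacle, because the class avoids the usual reasons for a small chromatic number. For instance, with a suitable vertex order $K_{4,4}$ is triangle-free, capped and ordered-hole-free, so $H$ need not be planar or $3$-degenerate; and $H$ may contain an induced $C_5$, so it need not be perfect or bipartite. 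Thus the bound $4$ has to be extracted from the interplay of all three hypotheses at once.

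The line of attack I would take exploits the order. The capped property propagates edges across crossings: if an edge $ac$ crosses an edge $bd$ with $a\prec b\prec c\prec d$, then $ad\in E(H)$, and $ad$ spans a strictly larger interval of $(V(H),{\prec})$ than either $ac$ or $bd$. It follows that the edges that are $\subseteq$-maximal, meaning no edge spans a strictly larger interval, are pairwise non-crossing; hence they induce an outerplanar triangle-free graph, which is $3$-colourable and easy to colour. Every remaining edge is confined inside the interval spanned by some $\subseteq$-maximal edge, which invites peeling off such an outerplanar ``layer'' and recursing on what lies inside each interval. The challenge is to keep the total number of colours equal to $4$ rather than letting it grow with the recursion depth, and this is precisely where triangle-freeness (so that the two sides of any neighbourhood are each independent) and ordered-hole-freeness (so that no induced monotone cycle can ``wrap around'' an interval) must be used to show that consecutive layers interact only very mildly. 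An essentially equivalent but possibly cleaner formulation of the target is to partition $V(H)$ into two sets, each inducing a bipartite subgraph, and then colour one part from $\{1,2\}$ and the other from $\{3,4\}$; dually, one may try to partition $E(H)$ into two bipartite subgraphs. I would pursue both formulations, since the same structural facts about maximal edges and induced monotone paths should drive either.

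The genuinely hard part is the purely combinatorial verification that the decomposition uses only $4$ colours; I anticipate an induction on $|V(H)|$ (or on the number of $\subseteq$-maximal edges) carrying an invariant that constrains how the colours may appear on the ends of each interval, and in which the ordered-hole-free hypothesis is invoked exactly when a long induced monotone path would otherwise force a fifth colour. Granting the structural statement, the algorithmic part is immediate: the $\subseteq$-maximal edges, the intervals they span, and the layered recursion are all found by left-to-right sweeps, and the only nontrivial primitive — deciding whether two edges cross, and building the associated auxiliary digraphs — runs in polynomial time just as in the proof of Proposition~\ref{prop:H-free-alg}; hence the $4$-colouring of $G_i-F_i$ is produced in polynomial time.
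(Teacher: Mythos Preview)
Your write-up correctly isolates the self-contained target---every triangle-free, capped, ordered-hole-free ordered graph is $4$-colourable---and correctly notes that none of the three hypotheses is enough on its own. But what follows is a plan, not a proof. You set up a layering by $\subseteq$-maximal edges, observe that the top layer is outerplanar, and then write that you ``anticipate an induction'' carrying an unspecified invariant that would keep the colour count at~$4$; you then explicitly ``grant the structural statement'' and move on to the algorithm. No invariant is stated, no induction step is carried out, and there is no argument for why adjacent layers interact mildly enough to reuse colours. Since you yourself flag this as ``the genuinely hard part'', the claim remains unproved.

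The paper's argument is short and takes a different route. For each component of $H=G_i-F_i$, run breadth-first search from the $\prec$-smallest vertex $p$ of that component, and show that every BFS level induces a bipartite subgraph; two colours for the even levels and two for the odd levels then give the $4$-colouring. To see bipartiteness, suppose some level contains an induced odd cycle $C$. Triangle-freeness and ordered-hole-freeness force $C$ to contain crossing edges $ac,bd$ with $a\prec b\prec c\prec d$; cappedness gives $ad\in E(C)$, and one checks that $V(C)\setminus\{a,d\}$ is a $\prec$-monotone induced path $b,v_1,\dots,v_t,c$. Now examine the neighbour $v_1'$ of $v_1$ on a shortest $v_1$--$p$ path: if $a\preceq v_1'\preceq d$, then following that path until it leaves $[a,d]$ (which it must, since $p\prec a$) and applying cappedness yields a strictly shorter path from $a$ or $d$ to $p$, contradicting that $C$ lies in one level; if $v_1'\prec a$ or $d\prec v_1'$, cappedness produces a triangle or an ordered hole on $\{v_1',v_1,\dots,v_t,c\}$ or on $\{b,v_1,v_1'\}$. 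Each case is a contradiction. Note that your ``equivalent formulation'' (partition $V(H)$ into two sets each inducing a bipartite subgraph) is exactly what the even/odd BFS levels deliver; the missing idea in your proposal is this BFS-from-the-minimum-vertex partition, not a maximal-edge recursion.
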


\begin{proof}
We just use the fact that $(G_i-F_i,{\prec})$ is triangle-free, capped, and ordered-hole-free.
For each component of $G_i-F_i$, we claim that any level of any breadth-first search tree which is rooted at the smallest vertex according to $\prec$ induces a bipartite subgraph.
This suffices to complete the proof, as we can reuse colours at every second level.

Suppose for the sake of contradiction that $p$ is the smallest vertex of a component and $C$ is an induced odd cycle which is contained in a level.
Since $(G_i-F_i,{\prec})$ is triangle-free and ordered-hole-free, there are $ac,bd\in E(C)$ such that $a\prec b\prec c\prec d$.
So $ad\in E(C)$, and none of the edges $ac,bd,ad$ are crossing with any other edge of $C$.
It follows that $V(C)\setminus\{a,d\}$ induces a path $bv_1\cdots v_tc$ with $b\prec v_1\prec\cdots\prec v_t\prec c$, for some positive integer $t$.

Let $P$ be a shortest path from $v_1$ to $p$ in $G_i-F_i$, and let $v_1'$ be the vertex adjacent to $v_1$ in $P$.
If $a\preceq v_1'\preceq d$ then we obtain a contradiction by finding a path which is shorter than $P$ from either $a$ or $d$ to $p$, using the fact that $(G_i-F_i,{\prec})$ is capped.
Otherwise, if $v_1'\prec a$, then $\{v_1',v_1,\ldots,v_t,c\}$ contains a triangle or an ordered hole, which is a contradiction.
In the final case that $d\prec v_1'$, the vertices $b,v_1,v_1'$ form a triangle, which is again a contradiction.
\end{proof}

Let $\phi_i$ be a $4$-colouring of $G_i-F_i$ from the last claim, for $1\leq i<\omega$.
Let $F=\bigcup_{i=1}^{\omega-1}F_i$.
If $(G,{\prec})$ is ordered-hole-free, then $F=\emptyset$ and the mapping $v\mapsto(\phi_1(v),\ldots,\phi_{\omega-1}(v))$ is a $4^{\omega-1}$-colouring of $G$.
Otherwise, since every $n$-vertex outerplanar graph has at most $2n-3$ edges, every $n$-vertex subgraph of $G[F]$ has at most $(2n-3)(\omega-1)$ edges, for any $n\geq 2$.
So every non-empty subgraph of $G[F]$ has a vertex of degree less than $4(\omega-1)$, and thus there exists a $4(\omega-1)$-colouring $\psi$ of $G[F]$.
Now, the mapping $v\mapsto(\phi_1(v),\ldots,\phi_{\omega-1}(v),\psi(v))$ is a $4^\omega(\omega-1)$-colouring of $G$.
\end{proof}

\section{Colouring $X$-free ordered graphs}
\label{sec:bananas}

In this section, we prove Theorem~\ref{thm:X-free} that $X$-free ordered graphs are $\chi$-bounded, using a surprising connection to a recent theorem of Scott and Seymour~\cite{banana20}.
Recall that $X$ is the ordered graph with four vertices and two crossing edges, illustrated in Figure~\ref{fig:banana_X} (right).
It is immediate from the definition that capped graphs are $X$-free.
Moreover, various natural classes of ordered intersection graphs are also $X$-free.
Specifically, an \emph{outerstring graph} is the intersection graph of a collection of curves in a half-plane which each have one endpoint on the boundary of that half-plane.
It is easy to see that outerstring graphs are $X$-free when equipped with the ordering of their endpoints along the boundary.
Outerstring graphs contain circle graphs and interval filament graphs.
Rok and Walczak~\cite{RWouterstring} proved that the class of outerstring graphs is $\chi$-bounded.
Theorem~\ref{thm:X-free} yields this results as a direct corollary.

We now prove that the underlying (unordered) graph of any $X$-free ordered graph forbids every induced subdivision of a certain banana.
A \emph{banana} is a graph which can be obtained from the disjoint union of paths by identifying one end of each path to a new vertex $s$, and the other end of each path to a different new vertex $t$.
A \emph{subdivision} of a graph $H$ is a graph obtained from $H$ by replacing edges of $H$ with internally disjoint paths between their ends.
Scott and Seymour~\cite{banana20} proved the following.

\begin{theorem}[Scott and Seymour~\cite{banana20}]
\label{thm:SS}
For any banana\/ $B$, the class of graphs excluding all subdivisions of\/ $B$ as induced subgraphs is\/ $\chi$-bounded.
\end{theorem}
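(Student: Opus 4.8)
The plan is to prove Theorem~\ref{thm:SS} by induction on the number $r$ of paths constituting the banana $B$, carrying the clique number $\omega$ as a parameter, and using a breadth-first \emph{levelling} argument to supply, at each stage, a reservoir of induced paths that can be assembled into a subdivision of $B$. Throughout I argue by contradiction: fix $\omega$ and a sequence of graphs of clique number at most $\omega$ with $\chi\to\infty$, each excluding all induced subdivisions of $B$, and aim to exhibit such a subdivision in one of them.

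First I would dispose of the two smallest cases, which coincide with known $\chi$-boundedness theorems. When $r=1$ the banana is a single path, and a subdivision of it is precisely an induced path of at least its length, so excluding all of them is the classical theorem of Gy\'arf\'as that forbidding a fixed induced path is $\chi$-bounded. When $r=2$ the banana is a cycle, and its subdivisions are the induced cycles of at least its length, so excluding all of them is the theorem that graphs of bounded hole length are $\chi$-bounded. Hence the base cases $r\leq 2$ are available, and the entire difficulty lies in the inductive step $r\geq 3$.

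Next comes the levelling reduction together with the path-building core. For any root $z$ with BFS levels $L_0,L_1,\dots$, every edge lies inside a level or between consecutive levels, so colouring even and odd levels from disjoint palettes gives $\chi(G)\leq 2\max_i\chi(G[L_i])$; passing to a level $L_N$ of still-enormous chromatic number, every vertex of $L_N$ sits at distance exactly $N$ from $z$ and is reached by an induced shortest path. Re-rooting and iterating yields a controlled configuration: a vertex $z$ together with a set of huge chromatic number, all of whose members are joined to $z$ by induced paths pointing in the same direction, so that $z$ can serve as one hub $s$ of the banana and the parallel strands emanate from it. By the inductive hypothesis, $B_{r-1}$-free graphs are $\chi$-bounded, so once $\chi$ is large the graph already contains an induced subdivision of the $(r-1)$-path banana, giving two hubs $s,t$ joined by $r-1$ internally disjoint, interiorwise anticomplete induced paths. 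I would then delete these $O(1)$ strands with a bounded neighbourhood; since a set of small chromatic number cannot destroy enormous $\chi$, the remainder still has large $\chi$, and within it the levelling structure lets me route one further induced $s$--$t$ path, upgrading the $(r-1)$-banana to an induced subdivision of $B$ and producing the contradiction.

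The hard part is guaranteeing, in that final routing, that the new strand is simultaneously internally disjoint from the earlier strands, \emph{anticomplete} to their interiors, and attached to the same pair $s,t$, all as an induced subgraph: chords and cross-edges between strands must be forbidden, not merely the vertex-disjointness that a Menger-type argument would supply. This is exactly where the bound on $\omega$ is indispensable and where one needs a pure-pair or multicover analysis to locate a large high-$\chi$ piece that sees a candidate strand in a clean complete-or-anticomplete fashion, so that the routing introduces no forbidden edges while the neighbourhoods being deleted keep bounded chromatic number. The remaining steps are bookkeeping by comparison; controlling these cross-edges is the substantive obstacle, and it is the reason the full Scott--Seymour levelling machinery is required.
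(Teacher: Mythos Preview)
The paper does not prove Theorem~\ref{thm:SS}; it is quoted as a result of Scott and Seymour~\cite{banana20} and used as a black box in Section~\ref{sec:bananas} to derive Theorem~\ref{thm:X-free}. There is therefore no proof in the paper to compare your attempt against.

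As for your sketch itself: the base cases and the overall shape (BFS levelling, induction on the number of strands, locating an induced $(r-1)$-banana and then routing one more clean strand) are indeed the architecture of the Scott--Seymour argument, and you correctly identify the genuine difficulty as controlling cross-edges so that the final strand is anticomplete to the interiors of the earlier ones. But your write-up does not actually carry out that step; you explicitly defer to ``the full Scott--Seymour levelling machinery'' at exactly the point where the real work lies. So what you have is an accurate outline of someone else's proof rather than a proof, and in any case the present paper makes no attempt to reproduce it.
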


Let $B_4$ denote the banana depicted in Figure~\ref{fig:banana_X} (left); it is obtained from three $4$-edge paths.
Theorem~\ref{thm:SS} together with the following proposition yields Theorem~\ref{thm:X-free} and, consequently, another proof that outerstring graphs and capped graphs are $\chi$-bounded.

\begin{proposition}
\label{prop:banana}
For any\/ $X$-free ordered graph\/ $(H,{\prec})$, the graph\/ $H$ contains no subdivision of\/ $B_4$ as an induced subgraph.
\end{proposition}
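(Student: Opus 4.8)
The plan is to argue by contradiction. Suppose $H$ has an induced subgraph $D$ that is a subdivision of $B_4$. Then $D$ consists of two branch vertices $s,t$ together with three internally disjoint induced paths $P_1,P_2,P_3$ from $s$ to $t$, each of length at least $4$; since $D$ is induced, $H$ has no edges among $V(D)$ other than the path edges, so in particular $s\not\sim t$. As reversing $\prec$ preserves $X$-freeness and leaves $H$ unchanged, I may assume $s\prec t$. For each $i$ let $a_i$ and $b_i$ be the neighbours of $s$ and of $t$ on $P_i$; from the length hypothesis and the fact that $D$ is induced, $a_1,a_2,a_3,b_1,b_2,b_3$ are pairwise distinct and the only edges of $H$ among $\{s,t,a_1,a_2,a_3,b_1,b_2,b_3\}$ are $sa_i$ and $tb_i$ for $i\in\{1,2,3\}$.

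Everything then reduces to one goal: find two edges $e,f$ of $D$ that cross in $(H,{\prec})$ such that no endpoint of $e$ is adjacent in $H$ to an endpoint of $f$. The four endpoints of such a pair induce exactly $\{e,f\}$, a copy of $X$ — a contradiction. Two remarks narrow the search. First, if $e$ and $f$ lie on the same path $P_i$ and their four endpoints induce only $\{e,f\}$, then they are separated by at least two further edges of $P_i$; but then, if they crossed, their endpoints would already form a copy of $X$ inside the induced path $P_i$. Hence no such same-path crossing pair exists, and the pair we want has its two edges on two different paths. Second, for $i\neq j$ the edges $sa_i$ and $b_jt$ are vertex-disjoint with no edge of $H$ between their endpoints, so such a pair $e=sa_i$, $f=b_jt$ does the job provided it crosses.

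The argument then splits on whether some $sa_i$ crosses some $b_jt$. If one does, we are done; so assume no $sa_i$ crosses any $b_jt$. Using $s\prec t$, these non-crossing conditions pin down the positions of the $a_i$ and $b_j$ relative to $s$ and $t$ (for instance, if $a_i\succ t$ then every $b_j$ lies in $(s,a_i)$; if $s\prec a_i\prec t$ then no $b_j$ lies in $(s,a_i)$; and symmetrically when $a_i\prec s$), and a finite case analysis — after relabelling the paths — leaves only a few configurations of $s,t,a_1,a_2,a_3,b_1,b_2,b_3$ in the order. In each of these, one of the end-edges $sa_i$ or $b_it$ spans a long interval of $\prec$; a third path must enter and leave that interval, so some edge of that third path has exactly one endpoint inside the interval and therefore crosses the spanning end-edge. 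The length-at-least-$4$ hypothesis lets us choose this crossing edge so that neither of its endpoints is among the vertices $a_\bullet,b_\bullet$ adjacent to $s$ or $t$, which makes the resulting pair ``far apart'' in $D$; the presence of the third path is what makes such a traversal unavoidable. A clean way to package the traversal step is the fact — an easy consequence of the two remarks above — that if $C$ is an induced cycle of $H$ and $v\notin V(C)$ has a unique neighbour $c$ on $C$, then the path obtained from $C$ by deleting $c$ and its two $C$-neighbours lies either strictly between $c$ and $v$ in the order or entirely outside the closed interval between them; applying this to the cycles $P_i\cup P_j$ with pendant vertices $a_k$ and $b_k$ yields the structural constraints cleanly.

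The step I expect to be the main obstacle is exactly this final case analysis: in each configuration one must verify that the extracted crossing pair really is ``far apart'' in $D$, i.e.\ that there is no adjacency among its four endpoints. The recurring trap is that the most natural crossing edge has $a_i$ (a neighbour of $s$) or $b_i$ (a neighbour of $t$) as an endpoint while the spanning edge is $sa_j$ or $b_jt$; then the four endpoints induce a configuration containing a triangle rather than the required $X$. Navigating around this, using the slack provided both by the length of the paths and by the presence of the third path, is the delicate part of the proof.
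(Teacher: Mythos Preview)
Your plan is sound in principle but takes a genuinely harder route than the paper, and the part you flag as ``the delicate part'' is exactly where the paper's key idea saves all the work.

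The paper's proof has two very short steps. First, it shows that $B_4$ \emph{itself} admits no $X$-free ordering: after a cyclic rotation one may assume $s,x_1,y_1,z_1$ all precede $t,x_3,y_3,z_3$; since no edge of one middle path may cross an edge of another, the three paths nest, and then the placement of the middle vertex of the middle path forces a crossing with one of the edges at $s$, giving $X$. Second---and this is the idea you are missing---it proves a one-line \emph{suppression lemma}: if $(G,{\prec})$ is $X$-free and $v$ has degree~$2$ with non-adjacent neighbours $u,w$, then deleting $v$ and adding the edge $uw$ again yields an $X$-free ordered graph. Indeed, after a rotation one may take $u\prec v\prec w$, and if the new edge $uw$ participated in an induced $X$, then one of $uv,vw$ already did. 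Iterating this suppression reduces any induced subdivision of $B_4$ to $B_4$ itself, and step one finishes.

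By contrast, you work directly inside the subdivision, trying to locate a crossing pair of edges whose four endpoints induce only those two edges. Your pendant-vertex observation (an induced cycle together with a vertex having a unique neighbour on it) is correct and is a reasonable tool, and your reduction to the case where no $sa_i$ crosses any $b_jt$ is fine. But from there you still owe a genuine case analysis on the relative positions of $s,t,a_1,a_2,a_3,b_1,b_2,b_3$ and then a careful choice of the traversing edge so that its endpoints avoid the neighbours of $s$ and $t$; you acknowledge this is unfinished. The suppression lemma eliminates all of that: it absorbs the entire ``length at least $4$'' and ``internal vertices of the paths'' complexity into a single local operation, leaving only the eleven-vertex graph $B_4$ to analyse. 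Even if you complete your case analysis, I'd recommend rewriting along the paper's lines---it is shorter, more robust, and isolates exactly why subdivisions are no harder than $B_4$.
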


\begin{proof}
First we prove that $B_4$ itself is not the underlying graph of an $X$-free ordered graph.
Suppose for a contradiction that there is a vertex ordering $\prec$ such that $(B_4,{\prec})$ is $X$-free.
Any cyclic reordering of $\prec$ also yields an $X$-free ordering.
So, as none of the edges $sx_1,sy_1,sz_1$ cross any of the edges $tx_3,ty_3,tz_3$, after cyclically reordering the vertices, we can assume that each of $s,x_1,y_1,z_1$ is less than each of $t,x_3,y_3,z_3$.

Consider the three paths $x_1,x_2,x_3$, $y_1,y_2,y_3$, and $z_1,z_2,z_3$.
No edge of one of these paths may cross an edge of another.
So, for instance, we cannot have that $x_1\prec y_1\prec x_3\prec y_3$.
Then, up to relabelling these three paths, we can assume that $x_1\prec y_1\prec z_1\prec z_3\prec y_3\prec x_3$.
Now we examine where the vertex $y_2$ lies in the ordering.
For the same reason, either $x_1\prec y_2\prec z_1$ or $z_3\prec y_2\prec x_3$; without loss of generality, we can assume the first case.
But now one of the two edges $sx_1$ or $sz_1$ is crossing with the edge $y_2y_3$, a contradiction.

Now suppose that $(G,{\prec})$ is an $X$-free ordered graph and $v$ is a degree-$2$ vertex which is adjacent to non-adjacent vertices $u$ and $w$.
We will show that, where $H$ is the graph obtained from $G$ by deleting $v$ and adding the edge $uw$, and $\prec_H$ is the restriction of $\prec$ to $V(H)$, the ordered graph $(H,{\prec_H})$ is $X$-free.
This will complete the proof.
Up to cyclically reordering the vertices and renaming $u$ and $w$, the only case to consider is that $u\prec v\prec w$.
If $(H,{\prec_H})$ is not $X$-free, then there must be a copy of $X$ containing the edge $uw$.
But then either $uv$ or $vw$ is in a copy of $X$ in $(G,{\prec})$, which is a contradiction.
\end{proof}

\begin{figure}[t]
\centering
\begin{tikzpicture}[scale=1.25, every node/.style={inner sep=2, outer sep=0}, vtx/.style={draw, circle, fill=gray}]
  \coordinate (s) at (30:2cm) {};
  \coordinate (x1) at (60:2cm) {};
  \coordinate (y1) at (90:2cm) {};
  \coordinate (z1) at (120:2cm) {};
  \coordinate (x2) at (150:2cm) {};
  \coordinate (y2) at (180:2cm) {};
  \coordinate (z2) at (210:2cm) {};
  \coordinate (x3) at (240:2cm) {};
  \coordinate (y3) at (270:2cm) {};
  \coordinate (z3) at (300:2cm) {};
  \coordinate (t) at (330:2cm) {};
  \path[name path=s-y1] (s)--(y1);
  \path[name path=s-z1] (s)--(z1);
  \path[name path=x1-x2] (x1)--(x2);
  \path[name path=y1-y2] (y1)--(y2);
  \path[name path=z1-z2] (z1)--(z2);
  \path[name path=x2-x3] (x2)--(x3);
  \path[name path=y2-y3] (y2)--(y3);
  \path[name path=z2-z3] (z2)--(z3);
  \path[name path=x3-t] (x3)--(t);
  \path[name path=y3-t] (y3)--(t);
  \path[name intersections={of=s-z1 and x1-x2, by=y1p}];
  \path[name intersections={of=x1-x2 and y1-y2, by=z1p}];
  \path[name intersections={of=y1-y2 and z1-z2, by=x2p}];
  \path[name intersections={of=z1-z2 and x2-x3, by=y2p}];
  \path[name intersections={of=x2-x3 and y2-y3, by=z2p}];
  \path[name intersections={of=y2-y3 and z2-z3, by=x3p}];
  \path[name intersections={of=z2-z3 and x3-t, by=y3p}];
  \path[name intersections={of=s-y1 and x1-x2, by=x1y1}];
  \path[name intersections={of=s-z1 and y1-y2, by=y1z1}];
  \path[name intersections={of=x1-x2 and z1-z2, by=z1x2}];
  \path[name intersections={of=y1-y2 and x2-x3, by=x2y2}];
  \path[name intersections={of=z1-z2 and y2-y3, by=y2z2}];
  \path[name intersections={of=x2-x3 and z2-z3, by=z2x3}];
  \path[name intersections={of=y2-y3 and x3-t, by=x3y3}];
  \path[name intersections={of=z2-z3 and y3-t, by=y3z3}];
  \draw[line width=3pt] (s)--(x1)--(x1y1)--(y1)--(y1z1)--(z1)--(z1x2)--(x2)--(x2y2)--(y2)--(y2z2)--(z2)--(z2x3)--(x3)--(x3y3)--(y3)--(y3z3)--(z3)--(t);
  \draw[line width=3pt] (t)--(y3p)--(x3p)--(z2p)--(y2p)--(x2p)--(z1p)--(y1p)--(s);
  \node at (30:2.3cm) {$s$};
  \node at (60:2.3cm) {$x_1$};
  \node at (90:2.3cm) {$y_1$};
  \node at (120:2.3cm) {$z_1$};
  \node at (150:2.3cm) {$x_2$};
  \node at (180:2.3cm) {$y_2$};
  \node at (210:2.3cm) {$z_2$};
  \node at (240:2.3cm) {$x_3$};
  \node at (270:2.3cm) {$y_3$};
  \node at (300:2.3cm) {$z_3$};
  \node at (330:2.3cm) {$t$};
  \draw[red, thick] (s)--(x1)--(x2)--(x3)--(t);
  \draw[red, thick] (s)--(y1)--(y2)--(y3)--(t);
  \draw[red, thick] (s)--(z1)--(z2)--(z3)--(t);
  \node[vtx] at (s) {};
  \node[vtx] at (x1) {};
  \node[vtx] at (x2) {};
  \node[vtx] at (x3) {};
  \node[vtx] at (y1) {};
  \node[vtx] at (y2) {};
  \node[vtx] at (y3) {};
  \node[vtx] at (z1) {};
  \node[vtx] at (z2) {};
  \node[vtx] at (z3) {};
  \node[vtx] at (t) {};
\end{tikzpicture}
\caption{$B_4$ as an induced subgraph of a polygon visibility graph.}
\label{fig:B4-poly}
\end{figure}
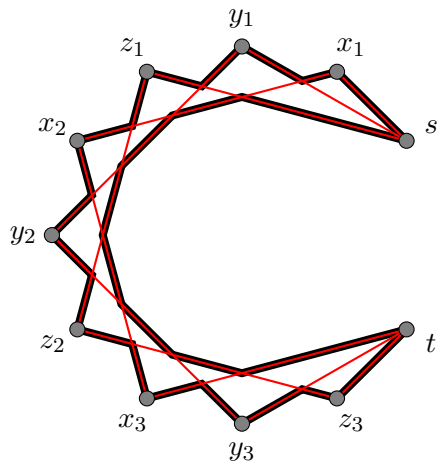

Scott and Seymour~\cite{banana20} actually proved Theorem~\ref{thm:SS} for \emph{banana trees}, which are obtained from trees by replacing each edge with a banana so that the ends of the edge are $s$ and $t$ (the high-degree vertices of the banana).
It is natural to ask if this theorem could be applied directly to polygon visibility graphs.
However, this is not possible; it is not difficult to see that every banana tree is an induced subgraph of a polygon visibility graph.
Figure~\ref{fig:B4-poly} illustrates this for the banana $B_4$.

\section*{Acknowledgement}

We thank Bodhayan Roy for sharing the problem of whether polygon visibility graphs are $\chi$-bounded.

\bibliographystyle{plainurl}
\bibliography{visibility}

\begin{thebibliography}{10}

\bibitem{Abello95terrain}
James Abello, Ömer Egecioglu, and Krishna Kumar.
\newblock Visibility graphs of staircase polygons and the weak {B}ruhat order,
  {I}: from visibility graphs to maximal chains.
\newblock {\em Discrete \& Computational Geometry}, 14(3):331--358, 1995.

\bibitem{AKMatroid02}
James Abello and Krishna Kumar.
\newblock Visibility graphs and oriented matroids.
\newblock {\em Discrete \& Computational Geometry}, 28(4):449--465, 2002.

\bibitem{Ameer2020}
Safwa Ameer, Matt Gibson-Lopez, Erik Krohn, Sean Soderman, and Qing Wang.
\newblock Terrain visibility graphs: persistence is not enough.
\newblock In Sergio Cabello and Danny~Z. Chen, editors, {\em 36th International
  Symposium on Computational Geometry (SoCG 2020)}, volume 164 of {\em Leibniz
  International Proceedings in Informatics (LIPIcs)}, pages 6:1--6:13. Schloss
  Dagstuhl--Leibniz-Zentrum für Informatik, Wadern, 2020.

\bibitem{arroyo20}
Alan Arroyo, Julien Bensmail, and R.~Bruce Richter.
\newblock Extending drawings of graphs to arrangements of pseudolines.
\newblock In Sergio Cabello and Danny~Z. Chen, editors, {\em 36th International
  Symposium on Computational Geometry (SoCG 2020)}, volume 164 of {\em Leibniz
  International Proceedings in Informatics (LIPIcs)}, pages 9:1--9:14. Schloss
  Dagstuhl--Leibniz-Zentrum für Informatik, Wadern, 2020.

\bibitem{Asplud1960rectangle}
Edgar Asplund and Branko Grünbaum.
\newblock On a coloring problem.
\newblock {\em Mathematica Scandinavica}, 8:181--188, 1960.

\bibitem{clique85}
David Avis and David Rappaport.
\newblock Computing the largest empty convex subset of a set of points.
\newblock In {\em Proceedings of the First Annual Symposium on Computational
  Geometry}, pages 161--167. Association for Computing Machinery, New York,
  1985.

\bibitem{axenovich2018chromatic}
Maria Axenovich, Jonathan Rollin, and Torsten Ueckerdt.
\newblock Chromatic number of ordered graphs with forbidden ordered subgraphs.
\newblock {\em Combinatorica}, 38(5):1021--1043, 2018.

\bibitem{bgmtw14}
Andreas Bärtschi, Subir~K. Ghosh, Matúš Mihalák,
  Thomas Tschager, and Peter Widmayer.
\newblock Improved bounds for the conflict-free chromatic art gallery problem.
\newblock In {\em Proceedings of the Thirtieth Annual Symposium on
  Computational Geometry}, pages 144--153. Association for Computing Machinery,
  New York, 2014.

\bibitem{twinWidthIII}
Édouard Bonnet, Colin Geniet, Eun~Jung Kim, Stéphan Thomassé, and Rémi
  Watrigant.
\newblock Twin-width {III}: max independent set and coloring.
\newblock arXiv:2007.14161, 2020.

\bibitem{chromGuard19}
Onur Çağırıcı, Petr Hliněný, Subir~K. Ghosh, and
  Bodhayan Roy.
\newblock On conflict-free chromatic guarding of simple polygons.
\newblock In Yingshu Li, Mihaela Cardei, and Yan Huang, editors, {\em
  Combinatorial Optimization and Applications}, volume 11949 of {\em Lecture
  Notes in Computer Science}, pages 601--612. Springer, Cham, 2019.

\bibitem{CHR19}
Onur Çağırıcı, Petr Hliněný, and Bodhayan Roy.
\newblock On colourability of polygon visibility graphs.
\newblock arXiv:1906.01904, 2019.

\bibitem{CHReals17}
Jean Cardinal and Udo Hoffmann.
\newblock Recognition and complexity of point visibility graphs.
\newblock {\em Discrete \& Computational Geometry}, 57(1):164--178, 2017.

\bibitem{maxTolerance}
Daniele Catanzaro, Steven Chaplick, Stefan Felsner, Bjarni~V. Halldórsson,
  Magnús~M. Halldórsson, Thomas Hixon, and Juraj Stacho.
\newblock Max point-tolerance graphs.
\newblock {\em Discrete Applied Mathematics}, 216(1):84--97, 2017.

\bibitem{chudnovsky2016induced}
Maria Chudnovsky, Alex Scott, and Paul Seymour.
\newblock Induced subgraphs of graphs with large chromatic number. {V}.
  {C}handeliers and strings.
\newblock arXiv:1609.00314, 2016.

\bibitem{barKVis}
Alice~M. Dean, William Evans, Ellen Gethner, Joshua~D. Laison, Mohammad~Ali
  Safari, and William~T. Trotter.
\newblock Bar $k$-visibility graphs: bounds on the number of edges, chromatic
  number, and thickness.
\newblock In Patrick Healy and Nikola~S. Nikolov, editors, {\em Graph Drawing},
  volume 3843 of {\em Lecture Notes in Computer Science}, pages 73--82.
  Springer, Berlin, Heidelberg, 2006.

\bibitem{clique00}
Stephan Eidenbenz and Christoph Stamm.
\newblock Maximum clique and minimum clique partition in visibility graphs.
\newblock In Jan van Leeuwen, Osamu Watanabe, Masami Hagiya, Peter~D. Mosses,
  and Takayasu Ito, editors, {\em Theoretical Computer Science: Exploring New
  Frontiers of Theoretical Informatics}, pages 200--212. Springer, Berlin,
  Heidelberg, 2000.

\bibitem{esperet2017habilitation}
Louis Esperet.
\newblock {\em Graph Colorings, Flows and Perfect Matchings}.
\newblock Habilitation thesis, Université Grenoble Alpes, 2017.

\bibitem{Evans2015}
William~S. Evans and Noushin Saeedi.
\newblock On characterizing terrain visibility graphs.
\newblock {\em Journal of Computational Geometry}, 6(1):108--141, 2015.

\bibitem{Ghosh97}
Subir~K. Ghosh.
\newblock On recognizing and characterizing visibility graphs of simple
  polygons.
\newblock {\em Discrete \& Computational Geometry}, 17(2):143--162, 1997.

\bibitem{survey13}
Subir~K. Ghosh and Partha~P. Goswami.
\newblock Unsolved problems in visibility graphs of points, segments, and
  polygons.
\newblock {\em ACM Computing Surveys}, 46(2):Article 22, 2013.

\bibitem{algCliqueVis07}
Subir~K. Ghosh, Thomas~C. Shermer, Binay~K. Bhattacharya, and Partha~P.
  Goswami.
\newblock Computing the maximum clique in the visibility graph of a simple
  polygon.
\newblock {\em Journal of Discrete Algorithms}, 5(3):524--532, 2007.

\bibitem{gyarfas1985chromatic}
András Gyárfás.
\newblock On the chromatic number of multiple interval graphs and overlap
  graphs.
\newblock {\em Discrete Mathematics}, 55(2):161--166, 1985.

\bibitem{KPW05}
Jan Kára, Attila Pór, and David~R. Wood.
\newblock On the chromatic number of the visibility graph of a set of points in
  the plane.
\newblock {\em Discrete \& Computational Geometry}, 34(3):497--506, 2005.

\bibitem{Levi1926}
Friedrich Levi.
\newblock Die {T}eilung der projektiven {E}bene durch {G}erade oder
  {P}seudogerade.
\newblock {\em Berichte über die Verhandlungen der Königlich-Sächsischen
  Gesellschaft der Wissenschaften zu Leipzig, Mathematisch-Physische Klasse},
  78:256--267, 1926.

\bibitem{LMW87}
Fabrizio Luccio, Silvia Mazzone, and Chak~Kuen Wong.
\newblock A note on visibility graphs.
\newblock {\em Discrete Mathematics}, 64(2):209--219, 1987.

\bibitem{ORS97}
Joseph O'Rourke and Ileana Streinu.
\newblock Vertex-edge pseudo-visibility graphs: characterization and
  recognition.
\newblock In {\em Proceedings of the Thirteenth Annual Symposium on
  Computational Geometry}, pages 119--128. Association for Computing Machinery,
  New York, 1997.

\bibitem{PachTomon20}
János Pach and István Tomon.
\newblock On the chromatic number of disjointness graphs of curves.
\newblock {\em Journal of Combinatorial Theory, Series~B}, 144:167--190, 2020.

\bibitem{pawlik2014triangle}
Arkadiusz Pawlik, Jakub Kozik, Tomasz Krawczyk, Michał Lasoń, Piotr Micek,
  William~T. Trotter, and Bartosz Walczak.
\newblock Triangle-free intersection graphs of line segments with large
  chromatic number.
\newblock {\em Journal of Combinatorial Theory, Series~B}, 105:6--10, 2014.

\bibitem{Pfender08}
Florian Pfender.
\newblock Visibility graphs of point sets in the plane.
\newblock {\em Discrete \& Computational Geometry}, 39(1--3):455--459, 2008.

\bibitem{RokWalczak19}
Alexandre Rok and Bartosz Walczak.
\newblock Coloring curves that cross a fixed curve.
\newblock {\em Discrete \& Computational Geometry}, 61(4):830--851, 2019.

\bibitem{RWouterstring}
Alexandre Rok and Bartosz Walczak.
\newblock Outerstring graphs are $\chi$-bounded.
\newblock {\em SIAM Journal on Discrete Mathematics}, 33(4):2181--2199, 2019.

\bibitem{banana20}
Alex Scott and Paul Seymour.
\newblock Induced subgraphs of graphs with large chromatic number. {VI}.
  {B}anana trees.
\newblock {\em Journal of Combinatorial Theory, Series~B}, 145:487--510, 2020.

\bibitem{Streinu05}
Ileana Streinu.
\newblock Non-stretchable pseudo-visibility graphs.
\newblock {\em Computational Geometry}, 31(3):195--206, 2005.

\bibitem{Suri86}
Subhash Suri.
\newblock A linear time algorithm for minimum link paths inside a simple
  polygon.
\newblock {\em Computer Vision, Graphics, and Image Processing}, 35(1):99--110,
  1986.

\bibitem{Tomon-personal}
István Tomon.
\newblock personal communication.

\end{thebibliography}

\end{document}